\documentclass[12pt]{amsart}
\usepackage{amsmath,amsfonts,amsthm,amssymb,nicefrac}
\usepackage{graphicx,color,url}
\usepackage{xcolor}
\usepackage{accents}
\usepackage{enumerate}
\usepackage{comment}
\usepackage{hyperref,tikz}
\usepackage{wrapfig}
\usetikzlibrary{calc,angles,quotes}

\newtheorem{theorem}{Theorem}
\newtheorem{lemma}[theorem]{Lemma}

\newtheorem{proposition}[theorem]{Proposition}
\theoremstyle{definition}
\newtheorem{remark}[theorem]{Remark}

\newtheorem{definition}[theorem]{Definition}

\numberwithin{equation}{section}\numberwithin{theorem}{section}
\allowdisplaybreaks
\newcounter{stepctr}
{\end{list}}

\def\XXint#1#2#3{{\setbox0=\hbox{$#1{#2#3}{\int}$}
 \vcenter{\hbox{$#2#3$}}\kern-.5\wd0}}

\newcommand{\e}{\varepsilon}

\author{Huy The Nguyen}
\address{School of Mathematical Sciences\\
	Queen Mary University of London\\
	Mile End Road\\
	London E1 4NS}
\email{h.nguyen@qmul.ac.uk}

\author{Artemis Vogiatzi}
\address{Copenhagen Centre for Geometry and Topology\\
	University of Copenhagen\\
	Universitetsparken 5, 2100 Kobenhavn, Danmark}
\email{aav@math.ku.dk}

\begin{document}
\title[]{High Codimension Curve Shortening Flow with Free Boundary}

\begin{abstract}
We study curve shortening flow in high codimension for arcs with free boundary meeting a fixed smooth barrier orthogonally. We prove dilation-invariant curvature and higher-derivative estimates up to the boundary using a Stahl-type localised maximum principle and an adapted cut-off. Using a reflected Gaussian entropy and blow-up analysis, Type I boundary singularities yield a shrinking semicircle model after reflection. Type II blow-ups give a Grim Reaper translator, which is ruled out under a free-boundary entropy bound $<2$. Hence in the low-entropy regime the flow either converges to the orthogonal chord or has only semicircle boundary singularities.
\end{abstract}
\maketitle

\section{Introduction}
In this paper, we study free-boundary curve-shortening flow in higher codimension and classify its singularities. As a first step, we establish curvature and gradient estimates up to the boundary and prove a parabolic compactness theorem ensuring that blow-up limits exist; this is in the same spirit as the boundary regularity theory for Neumann problems developed by Stahl \cite{Stahl1996} and the compactness framework for free-boundary Brakke flows and reflected monotonicity established by Edelen \cite{Edelen2016}. With these tools in hand, we distinguish between Type I and Type II singularities and perform a blow-up procedure in each case, following the general philosophy of the classical analysis for space curves due to Altschuler and Altschuler-Grayson \cite{Altschuler91,AG92}.

Under a Type I assumption, we apply Edelen's reflected monotonicity formula with the recentred reflected heat kernel \cite{Edelen2016} and carry out a parabolic blow-up. We show that, in all codimensions, the resulting boundary tangent flow is a free-boundary self-shrinker in a half-plane, so that the singularity becomes effectively codimension one. In the limit, the boundary line becomes flat, which allows us to reflect across it and obtain a classical interior self-shrinker \cite{Edelen2016}. Under the entropy bound $\operatorname{Ent}<2$, the only possible interior shrinker is the round circle, in view of the classification of homothetically shrinking solutions to curve-shortening flow by Abresch and Langer \cite{AL86}, and hence the boundary model is a half-circle.

At a boundary Type II blow-up point, we rescale at the curvature scale $\lambda_i=\kappa(p_i,t_i)$ around $(p_i,t_i)$ with $t_i\to T$. Edelen's reflected monotonicity yields uniform Gaussian bounds for these rescaled free-boundary flows and therefore subsequential convergence to a smooth eternal free-boundary limit in a half-plane \cite{Edelen2016}. Reflecting after passing to the limit produces an eternal interior solution. Hamilton's Harnack inequality \cite{Hamilton} then forces this interior limit to be a translator. To identify the translator, we either (i) invoke Altschuler's torsion decay mechanism along essential Type II blow-up sequences (so the torsion goes to $0$ along the sequence) \cite{Altschuler91}, or (ii) use the translator identity $\kappa N=V^\perp$, which implies the normal line is parallel and hence the curve is planar. In either case, the reflected limit must be the Grim Reaper translator, which is the canonical planar translator singled out by the Type II theory \cite{AG92,Hamilton}. Undoing the reflection shows that the original boundary tangent flow is one half of the Grim Reaper, meeting the boundary orthogonally.

Finally, since translators have entropy at least $2$, this Type II scenario cannot occur under our entropy bound. Thus, under $\operatorname{Ent}<2$, the only singularity model is the half-circle; otherwise, the flow exists for all time and converges to a geodesic chord with free boundary, as shown by Langford and Zhu \cite{LanZhu}.

\subsection{Curve shortening flow in high codimension}

For curve-shortening flow, we say a one parameter family of curve $\gamma(t)\subset\Omega$ has a geometric free boundary in a fixed barrier $\partial\Omega$ when its endpoints lie on $\partial\Omega$ (i.e. $\partial \gamma \subset \partial\Omega$) and $\gamma$ meets $\partial\Omega$ at right angles at those endpoints. This orthogonal-contact condition is both physically intuitive and mathematically natural in boundary-value geometric evolution problems.

\begin{definition}[Free Boundary Curve Shortening Flow]
Let $\Omega \subset \mathbb{R}^{n+1}$ be a smooth, compact domain with smooth boundary $\partial \Omega$. Consider a one-parameter family of immersed curves
	\begin{align*}
	\gamma(\cdot, t): I \rightarrow \Omega \quad t \in[0, T),
	\end{align*}

where $I=[a,b] \subseteq \mathbb{R}$ is an interval (e.g., $I=S^1$ for closed curves, or a finite interval for curves with endpoints on $\partial \Omega$).

We say the family of curves $\gamma(\cdot, t)$ evolves by Free Boundary Curve Shortening Flow (FBCSF) if it satisfies:
\begin{enumerate}
\item Curve Shortening Flow (interior equation)

At every interior point of the curve, the evolution is governed by
	\begin{align*}
	\frac{\partial \gamma}{\partial t}(p, t)&=\vec{\kappa}(p, t)\\
	\gamma(\cdot,0)&=\gamma_0,
	\end{align*}
for all $(p,t)\in I\times [0,T)$, where $\vec{\kappa}(p, t)$ is the curvature vector at the point $\gamma(p, t)$.
\item Free Boundary Condition (orthogonality)

At boundary intersection points (if any), the curve meets the boundary $\partial \Omega$ orthogonally:
	\begin{align*}
	\langle T(p, t), X\rangle=0, \quad \text{ and } \quad \gamma(\partial I, t) \subset \partial \Omega,
	\end{align*}

for all $(p,t)\in I\times [0,T)$, where $T(p, t)$ is the tangent vector to the curve at $\gamma(p, t)$ and $X \in T_{\gamma(p, t)}\partial \Omega$.
\end{enumerate}
\end{definition}

\begin{center}

\begin{tikzpicture}[scale=1.0, >=stealth]
  \def\R{2.2}  \def\angP{40}  \def\angQ{-120}  \def\Tlen{1.1} 
  \fill[red!12] (0,0) circle (\R);
 \draw[line width=0.9pt] (0,0) circle (\R) node[below right=42pt] {$\partial \Omega$};
 \node at (1.2,-1.2) {$\Omega$};

  \coordinate (a) at ({\R*cos(\angP)},{\R*sin(\angP)});
 \coordinate (b) at ({\R*cos(\angQ)},{\R*sin(\angQ)});

  \coordinate (radP) at ({cos(\angP)},{sin(\angP)});
 \coordinate (tanP) at ({-sin(\angP)},{cos(\angP)});
 \coordinate (radQ) at ({cos(\angQ)},{sin(\angQ)});
 \coordinate (tanQ) at ({-sin(\angQ)},{cos(\angQ)});

  \coordinate (m) at ($(0,0)+(0.05,0.05)$);
 \draw[line width=1.1pt]
 (a)
.. controls ($ (a) - 2.0*(radP) $) and ($ (m) + (-0.80,0.70) $).. (m)
.. controls ($ (m) + ( 1.0,-0.70) $) and ($ (b) - 2.0*(radQ) $).. (b)
 node[pos=-0.05, above left=2pt] {$\gamma(\cdot,t)$};

  \draw[->, line width=0.9pt] (a) -- ($ (a) + \Tlen*(radP) $) node[above right=2pt] {$T(a,t)$};
 \draw[->, line width=0.9pt] (a) -- ($ (a) + 0.9*\Tlen*(tanP) $) node[above left=-3pt] {$X$};

  \draw[->, line width=0.9pt] (b) -- ($ (b) + \Tlen*(radQ) $) node[below right=2pt] {$T(b,t)$};
 \draw[->, line width=0.9pt] (b) -- ($ (b) + 0.9*\Tlen*(tanQ) $) node[below left=-3pt] {$X$};

   \draw[line width=0.6pt]
 ($ (a) + 0.28*(radP) $) --
 ($ (a) + 0.28*(radP) + 0.20*(tanP) $) --
 ($ (a) + 0.20*(tanP) $);
  \draw[line width=0.6pt]
 ($ (b) + 0.28*(radQ) $) --
 ($ (b) + 0.28*(radQ) + 0.20*(tanQ) $) --
 ($ (b) + 0.20*(tanQ) $);

  \fill (a) circle (1.2pt) node[ right=10pt] {$\gamma(a,t)\in\partial \Omega$};
 \fill (b) circle (1.2pt) node[ left=5pt] {$\gamma(b,t)\in\partial \Omega$};
\end{tikzpicture}
\end{center}

\begin{theorem}[Long--time behaviour under low free--boundary entropy]
\label{thm_FBCSF-classification}
Let $\Omega\subset\mathbb R^{n+1}$ ($n\ge 2$) be a bounded, strictly convex domain with $C^2$ boundary.
Let $\{\gamma_t\}_{t\in[0,T)}$ be a maximal {\em free-boundary curve shortening flow} in $\Omega$, starting from a properly embedded $C^2$-arc $\gamma_0$ that meets
$\partial\Omega$ orthogonally.
Assume the {\em reflected Gaussian entropy} of the flow satisfies
	\begin{align*}
	 \operatorname{Ent}_{\partial\Omega}\bigl[\{\gamma_t\}\bigr] < 2.
	\end{align*}
\smallskip
Then exactly one of the following alternatives occurs:

\begin{enumerate}
\item[\textup{(a)}] (\textbf{Infinite-time existence})
 $T=\infty$, and the family $\gamma_t$ converges smoothly, as $t\to\infty$, to a unique straight chord $\overline{pq}\subset\overline\Omega$. In particular, the limit chord meets $\partial\Omega$ orthogonally at $p,q\in\partial\Omega$.

\smallskip
\item[\textup{(b)}] (\textbf{Boundary-type singularity})
$T<\infty$, the diameter of $\gamma_t$ tends to $0$, and the whole curve converges uniformly to a single boundary point $z\in\partial\Omega$. Moreover, writing
 	\begin{align*}
	 \widetilde\gamma_t :=
 \frac{\gamma_t - z}{\sqrt{2(T-t)}} \subset T_z\Omega ,
 	\end{align*}
the rescaled curves $\widetilde\gamma_t$ converge smoothly, as $t\to T$, to the unit semicircle in the tangent half-space $T_z\Omega\simeq\mathbb R^{n+1}_+$.
\end{enumerate}
\end{theorem}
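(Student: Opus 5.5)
The plan is a dichotomy on the maximal time $T$, with the singular case split further by the blow-up rate, using the tools described in the introduction. These are the boundary curvature estimates, the compactness theorem for free-boundary flows, Edelen's reflected monotonicity, and the classification of shrinkers and translators.

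\textbf{Step 1: $T=\infty$.} First I would show that if $T=\infty$ then alternative (a) holds. Length is nonincreasing, with
\[
\frac{d}{dt}L(\gamma_t)=-\int_{\gamma_t}|\vec\kappa|^2\,ds .
\]
Hence $\int_0^\infty\!\int_{\gamma_t}|\vec\kappa|^2\,ds\,dt<\infty$. Since the flow is nonsingular, the curvature must stay bounded for all time; otherwise a blow-up at times $t_i\to\infty$ would produce, by the arguments of Steps 3 and 4, a half-shrinker or half-Grim Reaper model. The higher-derivative estimates up to the boundary then give subsequential smooth convergence to a free-boundary critical point of length, namely a straight chord orthogonal to $\partial\Omega$ at both ends. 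Uniqueness of the limit, and full convergence rather than subsequential, would follow from the result of Langford and Zhu \cite{LanZhu}. Strict convexity of $\Omega$ makes the free-boundary chords nondegenerate enough for a Łojasiewicz-type argument.

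\textbf{Step 2: $T<\infty$, setup.} Now suppose $T<\infty$. Then $\max|\vec\kappa|\to\infty$, since otherwise the estimates extend the flow. There are two possibilities.
\begin{itemize}
\item The singularity forms at interior points. Away from $\partial\Omega$, Gaussian density $<2$ with an embedded arc forces the blow-up to be a multiplicity-one shrinking circle (Abresch--Langer \cite{AL86}) or a Grim Reaper. The circle would be a closed component, which is impossible for a connected arc with endpoints on $\partial\Omega$ (the curve cannot pinch off a closed loop while remaining embedded). The Grim Reaper has entropy $2$.
\item The singularity forms at the boundary. Pick a blow-up sequence $(p_i,t_i)$ with $\gamma(p_i,t_i)\to z$.
\end{itemize}

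\textbf{Step 3: Type II is excluded.} If $\sup(T-t)\max\kappa^2=\infty$, rescale at the curvature scale. Edelen's monotonicity gives uniform area ratios, and compactness yields an eternal free-boundary limit in a half-space whose barrier has flattened. Reflect it, apply Hamilton's Harnack inequality to get a translator, and observe that $\kappa N=V^\perp$ makes it planar, hence a Grim Reaper. Its entropy equals $2$, and by upper semicontinuity of the reflected entropy under limits this contradicts $\operatorname{Ent}_{\partial\Omega}<2$.

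\textbf{Step 4: the Type I case gives (b).} The flow is therefore Type I. Parabolic rescaling about $(z,T)$, combined with the reflected monotonicity formula, shows the rescaled flows $\widetilde\gamma_t$ subconverge to a free-boundary self-shrinker in $T_z\Omega\simeq\mathbb R^{n+1}_+$. Its reflection is an interior shrinking curve with entropy $<2$. Such a curve is planar, since a shrinker's curvature ODE preserves the osculating plane, and so by Abresch--Langer it is a line through the origin or the unit circle. A line would have Gaussian density $1$ at $(z,T)$, and White/Edelen-type $\varepsilon$-regularity would then rule out a singularity. So the limit is the circle, and the boundary model is the semicircle. Uniqueness of this tangent flow, which upgrades subsequential convergence to convergence as $t\to T$, follows from the round circle being an isolated, rigid shrinker; a Type I curvature bound plus Huisken-type convergence of the rescaled flow suffices. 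Since the semicircle is compact and has radius $\sqrt{2(T-t)}$, the whole curve lies in an $O(\sqrt{T-t})$ neighbourhood of $z$. This gives diameter $\to0$ and uniform convergence to $z$.

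\textbf{Main obstacle.} I expect the hardest step to be excluding interior singularities and closed-loop pinching for embedded arcs in high codimension, where the planar Grayson argument is unavailable. My first attempt would be to use the entropy bound $<2$ directly: any interior tangent flow has density $<2$ and hence is multiplicity one. I would also need to check that a circle cannot arise at an interior point of an arc while the endpoints stay apart. This uses the fact that the circle blow-up captures the entire curve within scale $\sqrt{T-t}$, which forces the endpoints to collapse to the same point; that point must then lie on $\partial\Omega$, reducing to the boundary case.
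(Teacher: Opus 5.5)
Your skeleton is the same as the paper's. Type~II is excluded via Harnack, then translator, then a Grim Reaper of entropy $2$; Type~I is identified via reflected monotonicity and Abresch--Langer; and $T=\infty$ is delegated to Langford--Zhu. Your explicit handling of interior singularities, and your planarity argument for shrinkers via the $x\wedge T$ ODE, are more careful than what the paper writes. The genuine gap is the order of logic in Step~3:
\begin{itemize}
\item Hamilton's Harnack inequality (Theorem~\ref{thm_Hamilton-Harnack}) concerns weakly convex hypersurfaces, so for curves it needs a convex \emph{planar} curve.
\item Your eternal limit is a priori a space curve in $\mathbb R^{n+1}$, $n\ge 2$, where $\partial_t\kappa=\partial_s^2\kappa+\kappa^3-\tau_1^2\kappa$. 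The torsion term breaks the maximum-principle computation behind the Harnack quantity.
\item You never address convexity of the limit either.
\end{itemize}
So you cannot first obtain a translator and then read off planarity from $\kappa N=V^\perp$. Planarity and convexity are inputs to Harnack, not consequences of it. (That translators are planar is true, but that is not the missing step.) The same objection applies to the interior Type~II case you dismiss in Step~2 by appealing to the Grim Reaper. The paper's introduction does list your ordering as an option, but its Section~\ref{hamiltonsection} applies Harnack only to the planar, convex limit produced by torsion decay.

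The missing ingredient is torsion decay along essential blow-up sequences, Theorem~\ref{thm_tau_over_kappa_to_zero}.
\begin{itemize}
\item At interior points, suppose $\tau_1^2\ge\mu\kappa^2$ along $(p_n,t_n)$. The persistence estimates of Theorems~\ref{dissk} and~\ref{thm_disstorsion} force $\iint_{\mathbf N(p_n,t_n,d)}\tau_1^2|\kappa|\,ds\,dt\ge C$. The cut-off inequality of Lemma~\ref{lem_cutoff_tau} bounds the same integral by $c_0\sqrt d$, which is a contradiction for small $d$.
\item At endpoints, the free-boundary identity $\tau_1=-\varepsilon\,\mathrm{II}_{\partial\Omega}(N,B)$ from Lemma~\ref{lem_endpoint-control} gives $|\tau_1/\kappa|\le K/|\kappa|\to 0$ directly.
\end{itemize}
This makes the limit planar (and, following Altschuler, convex), and only then is Harnack applied. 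The essential blow-up sequence of Lemma~\ref{bddcurv} supplies bounded curvature attaining its maximum, which Hamilton's equality case requires. Your \emph{rescale at the curvature scale} should be replaced by this point-picking.

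Two smaller corrections:
\begin{itemize}
\item Passing the entropy bound to the limit uses lower, not upper, semicontinuity (as in Lemma~\ref{lem_entropy_to_tangent}).
\item In Step~1, strict convexity does not make orthogonal chords nondegenerate (in a ball every diameter is one), and $\partial\Omega$ is only $C^2$. So no \L{}ojasiewicz argument is available; the paper simply invokes Langford--Zhu there.
\end{itemize}
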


\subsection{Acknowledgements} The first author was financially supported by the EPSRC through the grant ``Geometric Flows and the Dynamics of Phase Transitions" (EP/Y017862/1). The second author is financially supported by the Danish National Research Foundation through the Copenhagen Centre for Geometry and Topology (DNRF151).

\section{Evolution Equations}

We denote by $s $ the arc-length parameter, and use:
\begin{itemize}
 \item $\partial_s $: derivative with respect to arc length,
 \item $T = \partial_s \gamma $: unit tangent,
 \item $N $: principal normal,
 \item $\kappa $: scalar curvature,
 \item $\tau_i $: torsions.
\end{itemize}

\begin{definition}
Let $\gamma: I \times [0,T) \to \mathbb{R}^{n+1} $ be a smooth family of immersed curves. We say that $\gamma$ evolves under Free Boundary Curve Shortening Flow (FBCSF) inside a smooth domain $\Omega \subset \mathbb{R}^{n+1} $ if $\gamma$ satisfies the following partial differential equation
	\begin{align}\label{eqn_FreeBoundaryCSF}
	\partial_t \gamma = \kappa N,
	\end{align}
where $\kappa $ is the curvature and $N $ is the principal normal vector. At the boundary $\gamma(\cdot,t) \in \partial \Omega $, the curve satisfies the free boundary condition which states that the curve meets the boundary orthogonally - that is if $T$ is the tangent to the curve and $X$ is any tangent vector to $\partial\Omega$ then
	\begin{align*}
	\langle T, X \rangle = 0.
	\end{align*}
\end{definition}

\begin{remark}
At a boundary point $p\in \partial I$ we have $\gamma(p,t)\in \partial\Omega$ and the free
boundary condition says that $T(p,t)\perp T_{\gamma(p,t)}\partial\Omega$. Since $T_{\gamma(p,t)}\partial\Omega$ is a
hyperplane with unit normal $\nu_{\partial\Omega}(\gamma(p,t))$, it follows that
	\begin{align*}
	T(p,t)=\pm \nu_{\partial\Omega}(\gamma(p,t)).
	\end{align*}
This identity holds only at the endpoints (not along the interior of the curve).
\end{remark}

\begin{theorem}[Interior evolution equations for curve shortening in $\mathbb{R}^{n+1}$]
\label{thm_interior-evolution}
Let $\gamma(\cdot,t)$ be a smooth space curve evolving by curve shortening
$\partial_t \gamma=\kappa N$. Write $s$ for arclength at time $t$, and let
$(T,N,B)$ be the Frenet frame with curvature $\kappa$ and first and second torsion $\tau_1,\tau_2$.
Then, pointwise in the interior (and wherever $\kappa\neq 0$ for the torsion
formula), the following hold:
	\begin{align}
	\text{\emph{Tangent vector:}}\quad
&\partial_t T = \partial_s^2 T + \kappa^2 T = \kappa_sN + \kappa\tau_1 B_1,
\label{eqn_evol-T}\\
	\text{\emph{Curvature vector:}}\quad
&\partial_t(\kappa N)=-\kappa \frac{\partial \kappa}{\partial s} T+\left(\frac{\partial^2 \kappa}{\partial s^2}+\kappa^3-\tau_1^2 \kappa\right) N\nonumber\\
	&\quad+\left(\kappa \frac{\partial \tau_1}{\partial s}+2 \tau_1 \frac{\partial \kappa}{\partial s}\right) B_1+\kappa \tau_1 \tau_2 B_2,
\label{eqn_evol-kN}\\
	\text{\emph{Curvature scalar:}}\quad
&\partial_t \kappa = \partial_s^2 \kappa + \kappa^3 - \tau_1^2\kappa,
\label{eqn_evol-kappa}\\
	\text{\emph{Curvature squared:}}\quad
&\partial_t(\kappa^2) = \partial_s^2(\kappa^2) - 2(\kappa_s)^2 + 2\kappa^4 - 2\tau_1^2\kappa^2,
\label{eqn_evol-kappa2}\\
	\text{\emph{Torsion:}}\quad
&\partial_t \tau_1=\frac{\partial^2 \tau_1}{\partial s^2}+\frac{2}{\kappa} \frac{\partial \kappa}{\partial s} \frac{\partial \tau_1}{\partial s}\nonumber\\
	&\quad+\frac{2 \tau_1}{\kappa}\left(\frac{\partial^2 \kappa}{\partial s^2}-\frac{1}{\kappa}\left(\frac{\partial \kappa}{\partial s}\right)^2+\kappa^3\right)-\tau_1 \tau_2^2,\qquad \kappa\neq0.
\label{eqn_evol-tau}
	\end{align}
Equivalently, with $a:=(\log \kappa)_s=\kappa_s/\kappa$ and $a_s=(\log \kappa)_{ss}$,
	\begin{align}
	&(\partial_t-\partial_s^2) \tau_1=2 \frac{\partial}{\partial s}\left(a \tau_1\right)+2 \tau_1 \kappa^2-\tau_1 \tau_2^2,
\label{eqn_evol-tau-alt}\\
	&(\partial_t-\partial_s^2)\left(\tau_1^2\right)=2 a \frac{\partial}{\partial s}\left(\tau_1^2\right)-2\left(\frac{\partial \tau_1}{\partial s}\right)^2+4 \tau_1^2 \frac{\partial a}{\partial s}+4 \tau_1^2 \kappa^2-2 \tau_1^2 \tau_2^2.
\label{eqn_evol-tau2}
	\end{align}
\end{theorem}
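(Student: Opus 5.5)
The approach is to work in the generalized Frenet frame $(T,N,B_1,B_2,\dots)$ with the standard structure equations
\begin{align*}
\partial_s T=\kappa N,\quad \partial_s N=-\kappa T+\tau_1 B_1,\quad \partial_s B_1=-\tau_1 N+\tau_2 B_2,\quad \dots
\end{align*}
We first compute the commutator between $\partial_t$ and $\partial_s$ under $\partial_t\gamma=\kappa N$, and then differentiate the frame in time. The arclength element $v=|\gamma_u|$ satisfies
\begin{align*}
\partial_t v=\langle \partial_s(\kappa N),T\rangle v=-\kappa^2 v,
\end{align*}
so that $\partial_t\partial_s=\partial_s\partial_t+\kappa^2\partial_s$.

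\textbf{Tangent vector.} Applying the commutator to $\gamma$ gives
\begin{align*}
\partial_t T=\partial_s(\kappa N)+\kappa^2T=\kappa_s N+\kappa\tau_1B_1 .
\end{align*}
Since $\partial_s^2T=\partial_s(\kappa N)=\kappa_sN-\kappa^2T+\kappa\tau_1B_1$, this is exactly (\ref{eqn_evol-T}).

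\textbf{Curvature vector and scalar curvature.} For (\ref{eqn_evol-kN}) we apply the commutator to $T$:
\begin{align*}
\partial_t(\kappa N)=\partial_t\partial_sT=\partial_s(\partial_tT)+\kappa^2\partial_sT.
\end{align*}
We expand $\partial_s(\kappa_sN+\kappa\tau_1B_1)$ with the Frenet equations. This produces a $T$-component $-\kappa\kappa_s$, an $N$-component $\kappa_{ss}-\kappa\tau_1^2$ (to which we add $\kappa^3$ from $\kappa^2\partial_sT$), a $B_1$-component $\kappa_s\tau_1+(\kappa\tau_1)_s=\kappa\tau_{1,s}+2\tau_1\kappa_s$, and a $B_2$-component $\kappa\tau_1\tau_2$. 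Taking the $N$-component and using $\langle\partial_tN,N\rangle=0$ gives (\ref{eqn_evol-kappa}). Multiplying by $2\kappa$ and using $\partial_s^2(\kappa^2)=2\kappa\kappa_{ss}+2\kappa_s^2$ gives (\ref{eqn_evol-kappa2}).

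\textbf{Torsion.} This step is the main bookkeeping obstacle, and it is where $\kappa\neq0$ is needed. From $\kappa N$ we read off $\partial_tN=\kappa^{-1}\bigl(\partial_t(\kappa N)-\kappa_t N\bigr)$, which gives
\begin{align*}
\partial_tN=-\kappa_sT+\Bigl(\tau_{1,s}+\frac{2\tau_1\kappa_s}{\kappa}\Bigr)B_1+\tau_1\tau_2B_2 .
\end{align*}
We then compute $\tau_1=\langle\partial_sN,B_1\rangle$ and differentiate in time:
\begin{align*}
\partial_t\tau_1=\langle\partial_s\partial_tN,B_1\rangle+\kappa^2\tau_1+\langle\partial_sN,\partial_tB_1\rangle .
\end{align*}
The last term vanishes, because $\partial_sN\in\mathrm{span}(T,B_1)$ and $\langle T,\partial_tB_1\rangle=-\langle\partial_tT,B_1\rangle=-\kappa\tau_1$. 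Expanding $\partial_s\partial_tN$ with the Frenet equations, its $B_1$-component is
\begin{align*}
-\kappa\kappa_s\cdot 0+\partial_s\Bigl(\tau_{1,s}+\frac{2\tau_1\kappa_s}{\kappa}\Bigr)-\tau_1\tau_2^2 .
\end{align*}
The $T$-term contributes $-\kappa_s\langle\partial_sT,B_1\rangle=0$, and from $\tau_1\tau_2B_2$ we get $\tau_1\tau_2\langle\partial_sB_2,B_1\rangle=-\tau_1\tau_2^2$. We have to double-check the contribution $\kappa^2\tau_1$ coming from the $T$-row of $\partial_t\partial_s N$. Recombining with the $-\kappa_sT$ and $\kappa^2$ terms should produce the $2\tau_1\kappa^2$ in (\ref{eqn_evol-tau}). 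Writing $2\partial_s(\tau_1\kappa_s/\kappa)$ out in full gives the stated form, and with $a=\kappa_s/\kappa$ it gives (\ref{eqn_evol-tau-alt}).

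\textbf{Squared torsion.} Finally, (\ref{eqn_evol-tau2}) follows by multiplying (\ref{eqn_evol-tau-alt}) by $2\tau_1$. We use $(\partial_t-\partial_s^2)\tau_1^2=2\tau_1(\partial_t-\partial_s^2)\tau_1-2\tau_{1,s}^2$ and
\begin{align*}
2\tau_1\cdot2(a\tau_1)_s=2a(\tau_1^2)_s+4\tau_1^2a_s .
\end{align*}
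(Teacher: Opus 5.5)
The gap is in the torsion step; your derivations of \eqref{eqn_evol-T}, \eqref{eqn_evol-kN}, the curvature and curvature-squared equations, and the passage from \eqref{eqn_evol-tau-alt} to \eqref{eqn_evol-tau2} are correct. The paper states this theorem without proof, and your Frenet-frame computation based on $[\partial_t,\partial_s]=\kappa^2\partial_s$ is the standard route.

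You claim that $\langle\partial_sN,\partial_tB_1\rangle$ vanishes, but the facts you cite show it does not. Using $\partial_sN=-\kappa T+\tau_1B_1$, $\langle B_1,\partial_tB_1\rangle=0$ and $\langle T,\partial_tB_1\rangle=-\langle\partial_tT,B_1\rangle=-\kappa\tau_1$, one gets
\begin{align*}
\langle\partial_sN,\partial_tB_1\rangle=-\kappa\langle T,\partial_tB_1\rangle=\kappa^2\tau_1\neq0 .
\end{align*}
As written, your computation therefore gives $\partial_t\tau_1=\partial_s\bigl(\tau_{1,s}+2\tau_1\kappa_s/\kappa\bigr)-\tau_1\tau_2^2+\kappa^2\tau_1$. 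That has coefficient $1$ instead of $2$ in front of $\tau_1\kappa^2$.

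Your remark about ``recombining with the $-\kappa_sT$ and $\kappa^2$ terms'' does not supply the missing term. The $-\kappa_sT$ part of $\partial_tN$ contributes $-\kappa_s\langle\partial_sT,B_1\rangle=0$, as you note yourself. The commutator supplies only the single $\kappa^2\tau_1$ already counted. The coefficient $2\tau_1\kappa^2$ is exactly what enters \eqref{eqn_evol-tau}, \eqref{eqn_evol-tau-alt}, and the term $4\tau_1^2\kappa^2$ in \eqref{eqn_evol-tau2}. So the torsion equations are not established by the argument as it stands.

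The repair is to keep the term you discarded:
\begin{align*}
\partial_t\tau_1=\langle\partial_s\partial_tN,B_1\rangle+\kappa^2\tau_1+\kappa^2\tau_1
=\partial_s\Bigl(\tau_{1,s}+\frac{2\tau_1\kappa_s}{\kappa}\Bigr)-\tau_1\tau_2^2+2\kappa^2\tau_1 .
\end{align*}
Then $2\partial_s(\tau_1\kappa_s/\kappa)=\frac{2}{\kappa}\kappa_s\tau_{1,s}+\frac{2\tau_1}{\kappa}\bigl(\kappa_{ss}-\kappa_s^2/\kappa\bigr)$ together with $2\tau_1\kappa^2=\frac{2\tau_1}{\kappa}\kappa^3$ gives \eqref{eqn_evol-tau}.

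An equivalent bookkeeping, probably the ``$T$-row'' contribution you were reaching for, is to differentiate $\partial_sN=-\kappa T+\tau_1B_1$ in time and take the $B_1$-component. This gives $\langle\partial_t\partial_sN,B_1\rangle=\partial_t\tau_1-\kappa\langle\partial_tT,B_1\rangle=\partial_t\tau_1-\kappa^2\tau_1$. So the second $\kappa^2\tau_1$ comes from the $-\kappa T$ term of $\partial_sN$, not from anything in $\partial_tN$. With this correction the rest of your proposal goes through unchanged.
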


\begin{remark}
\textbf{Planar case.} If the curve lies in a plane (so $\tau_1\equiv 0$), these reduce to
$\partial_t T=\partial_s^2T+\kappa^2T$,
$\partial_t\kappa=\partial_s^2\kappa+\kappa^3$ and
$\partial_t(\kappa^2)=\partial_s^2(\kappa^2)-2(\kappa_s)^2+2\kappa^4$.

\end{remark}

\section{Short Time Existence}
Following \cite{LanZhu}, we say that $\left\{\gamma_t\right\}_{t \in[0, T)} \subset \Omega \subset \mathbb{R}^{n+1}$ is a (classical) free-boundary curve shortening flow if it evolves by $\partial_t \gamma=\vec{\kappa}$ and satisfies the geometric free-boundary conditions $\gamma(\partial I, t) \subset \partial \Omega$ together with orthogonal contact (equivalently, the conormal of $\gamma_t$ agrees with $\nu_{\partial\Omega}$ at $\partial \gamma_t$). Short-time existence (and uniqueness, up to reparametrisation) is obtained by a straightforward high-codimension adaptation of \cite{Stahl1996} in the Neumann/free-boundary existence theory: one works in a tubular neighbourhood of the initial curve and writes $\gamma_t$ as a normal graph over $\gamma_0$. In higher codimension the unknown becomes $\mathbb{R}^n$-valued, and the orthogonality condition becomes a nonlinear oblique (Neumann-type) boundary condition for this quasilinear parabolic system. Standard parabolic boundary-value theory then yields a unique smooth solution on $[0, \delta)$ and hence a maximal smooth existence time $T \in(0, \infty]$ with the usual continuation criterion (loss of smoothness can only occur through curvature blow-up). Finally, we also view any such smooth free-boundary solution as an instance of \cite{Edelen2016} in the free-boundary Brakke flow framework (a smooth free-boundary flow induces a free-boundary Brakke flow and Edelen proves existence/smooth short-time existence from smooth embedded initial data), which we use later for compactness, reflected monotonicity and tangent flow arguments in arbitrary codimension.

\begin{theorem}(short-time existence) There exists $\delta>0$ and a unique smooth family of immersions
	\begin{align*}
	\gamma \in C^{\infty}(I \times(0, \delta]) \cap C^{2+\alpha, 1+\alpha / 2}(I \times[0, \delta]),
	\end{align*}
such that $\gamma_t:=\gamma(I, t)$ evolves by curve shortening flow
	\begin{align*}
	\partial_t \gamma=\vec{\kappa}
	\end{align*}
and satisfies the free-boundary conditions for all $t \in[0, \delta]$
	\begin{align*}
	\gamma(\partial I, t) \subset \partial\Omega, \quad \gamma_t \perp \partial\Omega \text{ along } \partial I.
	\end{align*}
\end{theorem}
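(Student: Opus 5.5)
The plan is to write the solution as a normal graph over the initial curve and solve a quasilinear parabolic system with a nonlinear oblique boundary condition, in the manner of Stahl's Neumann-problem theory.

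\textbf{Step 1: graph representation.} Fix a smooth reference curve $\hat\gamma_0$ that is $C^{2+\alpha}$-close to $\gamma_0$ and meets $\partial\Omega$ orthogonally. Choose a smooth normal frame $\{\nu_1,\dots,\nu_n\}$ along $\hat\gamma_0$. Near the endpoints, bend this frame so that its members are tangent to $\partial\Omega$; this is possible because, by orthogonality, $T_{\gamma}\partial\Omega$ is exactly the normal space of $\hat\gamma_0$ at the endpoints. Then parametrise nearby curves as
\begin{align*}
\gamma(x,t)=\Phi\bigl(x,u(x,t)\bigr),\qquad u:I\times[0,\delta]\to\mathbb R^n,
\end{align*}
where $\Phi$ is a diffeomorphism from a neighbourhood of $I\times\{0\}$ onto a tubular neighbourhood. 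Choose $\Phi$ so that $\Phi(\partial I\times\mathbb R^n)\subset\partial\Omega$. This straightens the barrier: the condition $\gamma(\partial I,t)\subset\partial\Omega$ becomes automatic.

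The flow $\partial_t\gamma=\vec\kappa$, with the tangential part projected out (equivalently, adding a DeTurck-type tangential term), becomes a uniformly parabolic quasilinear system
\begin{align*}
\partial_t u = a(x,u,u_x)\,u_{xx}+b(x,u,u_x).
\end{align*}
Here $a$ is positive definite for small $|u|,|u_x|$; indeed, to leading order it is a scalar multiple of the identity. The orthogonality condition becomes $n$ boundary equations $G(x,u,u_x)=0$ on $\partial I$, which say that $\partial_x\gamma$ is parallel to $\nu_{\partial\Omega}$. Their linearisation at $u=0$ is $u_x+(\text{lower order})=0$, a Neumann condition. The complementing (Lopatinskii--Shapiro) condition therefore holds.

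\textbf{Step 2: linear theory and fixed point.} Linearise at the initial data $u_0$ and apply Solonnikov's Schauder theory for parabolic systems with oblique boundary conditions in $C^{2+\alpha,1+\alpha/2}$. The compatibility condition of order zero, $G(x,u_0,u_{0,x})=0$ on $\partial I$, holds because $\gamma_0$ is orthogonal.

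Then set up a contraction on a small ball of $C^{2+\alpha,1+\alpha/2}(I\times[0,\delta])$. Alternatively, use the inverse function theorem on the map $u\mapsto(\partial_tu-a u_{xx}-b,\;G|_{\partial I},\;u(0))$, as Stahl does. Choosing $\delta$ small gives a unique solution. Interior and boundary bootstrapping then yields $C^\infty$ regularity for $t>0$.

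\textbf{Step 3: uniqueness up to reparametrisation.} Any two geometric solutions can be written as graphs over the same reference for a short time. They then solve the same system, and uniqueness for the system gives geometric uniqueness.

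\textbf{Main obstacle.} The main difficulty is the boundary condition in higher codimension. One must construct $\Phi$ compatibly with $\partial\Omega$ and check that the nonlinear vector-valued condition $G$ satisfies the complementing condition uniformly. I would first try to verify this by computing $DG$ at $u=0$ in the adapted frame and showing that it is an invertible map of $u_x$ plus terms in $u$. The low regularity of $\gamma_0\in C^2$ is handled by the smooth reference $\hat\gamma_0$. Continuity of the solution at $t=0$ in $C^{2+\alpha}$ would need $C^{2+\alpha}$ initial data, so I would state the result for such data or approximate.
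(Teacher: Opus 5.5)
Your proposal is correct and follows essentially the same route as the paper. Both arguments use a boundary-adapted tubular chart that makes $\gamma(\partial I,t)\subset\partial\Omega$ automatic; the paper builds it by composing flows of an orthonormal frame of vector fields tangent to $\partial\Omega$. Both then derive a strictly parabolic quasilinear system for an $\mathbb{R}^n$-valued graph function and treat orthogonality as a full-rank oblique boundary operator satisfying Lopatinskii--Shapiro. They finish with Schauder theory plus the implicit function theorem, bootstrapping, and reparametrisation.

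The only real difference is that you graph over a smooth reference curve and linearise at $u_0$, whereas the paper graphs over $\gamma_0$ itself with $w(\cdot,0)=0$. Your variant is slightly more careful about the regularity of the initial curve, which, as you note, must be $C^{2+\alpha}$ for the stated $C^{2+\alpha,1+\alpha/2}$ conclusion.
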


\begin{proof}
We adapt Stahl's reduction to a strictly parabolic quasilinear boundary value problem, with the only
high-codimension change being that the unknown is $\mathbb{R}^n$-valued and the boundary condition is an
$\mathbb{R}^n$-valued oblique operator. The key geometric point is to build a tubular chart in which the endpoint
constraint $\gamma(\partial I,t)\subset\partial\Omega$ is automatic, so that only the orthogonality remains as a
first-order oblique condition.

{\it Step 1}: Boundary-adapted tubular coordinates (Stahl's geometric input, in codimension $n$).\\
	 Fix a smooth orthonormal frame $\{\nu_a\}_{a=1}^n$ of the normal bundle $N\gamma_0$
along $I$. Choose a small neighbourhood $\mathcal{U}$ of $\gamma_0(I)$ in $\mathbb{R}^{n+1}$.
We construct smooth vector fields $\xi_1,\dots,\xi_n$ on $\mathcal{U}$ such that
	\begin{align*}
	\xi_a|_{\gamma_0}=\nu_a,\qquad \xi_a|_{\partial\Omega\cap\mathcal{U}}\in T\partial\Omega,\qquad \langle \xi_a,\xi_b\rangle=\delta_{ab}
\ \text{ on }\mathcal{U}.
	\end{align*}
A concrete construction is as follows: extend $\nu_a$ smoothly off $\gamma_0$ to $\mathcal{U}$ and on $\partial\Omega\cap\mathcal{U}$
project these extensions onto $T\partial\Omega$. Then (after shrinking $\mathcal{U}$ if needed) apply a smooth Gram--Schmidt
procedure to obtain an orthonormal $n$-frame with the stated properties.

Let $\Phi_a(\cdot,s)$ denote the flow of $\xi_a$. For $\e>0$ sufficiently small, define
	\begin{align*}
	\Psi:I\times B_\e^n\to\mathbb{R}^{n+1},\qquad
\Psi(x,w)=\Phi_n(\cdot,w^n)\circ\cdots\circ\Phi_1(\cdot,w^1)\bigl(\gamma_0(x)\bigr),
	\end{align*}
where $B_\e^n\subset\mathbb{R}^n$ is the radius-$\e$ ball. For $\e$ small, $\Psi$ is a
diffeomorphism onto its image (by the inverse function theorem, using that $D_w\Psi(x,0)$ maps the standard basis
of $\mathbb{R}^n$ to $\{\xi_a(\gamma_0(x))\}$). Since each $\xi_a$ is tangent to $\partial\Omega$ along $\partial\Omega$, each flow
$\Phi_a$ preserves $\partial\Omega$ locally; therefore, for $x\in\partial I$ and all $w\in B_\e^n$ we have
	\begin{align*}
	\gamma_0(x)\in\partial\Omega \implies \Psi(x,w)\in\partial\Omega.
	\end{align*}
Hence, in the chart $\Psi$, the endpoint constraint $\widetilde\gamma(\partial I,t)\subset\partial\Omega$ will hold
automatically.

{\it Step 2}: Graph gauge and reduction to a strictly parabolic quasilinear system.\\
	Seek the evolving curve in the form
	\begin{align*}
	\widetilde{\gamma}(x,t)=\Psi(x,w(x,t)),
\qquad
w:I\times[0,\delta]\to B_\e^n\subset\mathbb{R}^n,
\qquad
w(\cdot,0)=0.
	\end{align*}
Fix $x$ as a parameter on $I$ (for instance the arclength parameter of $\gamma_0$). Writing the curvature vector of
$\widetilde\gamma$ in these coordinates yields a quasilinear system for $w$ of the form
	\begin{align}\label{derofw}
	\partial_t w=A\left(x, w, \partial_x w\right) \partial_{x x} w+B\left(x, w, \partial_x w\right),
	\end{align}
where $A(x,w,p)$ is an $n\times n$ matrix depending smoothly on $(x,w,p)$, and
	\begin{align*}
	A(x,0,0)=\mathrm{Id}_{\mathbb{R}^n}.
	\end{align*}
In particular, $A$ is uniformly positive definite whenever $|w|+|\partial_x w|$ is sufficiently small. Since
$w(\cdot,0)=0$, strict parabolicity holds for short time along any solution with small $C^1$ norm.

{\it Step 3}: Boundary conditions become an $\mathbb{R}^n$-valued oblique operator of full rank.\\
	By Step 1, for every $t$ we automatically have $\widetilde\gamma(\partial I,t)\subset\partial\Omega$. Thus the only remaining free-boundary condition is orthogonality at the endpoints
	\begin{align*}
	T_{\widetilde\gamma_t}\perp T\partial\Omega \ \text{ along }\partial I.
	\end{align*}
Fix along $\partial\Omega\cap\mathcal{U}$ an orthonormal frame $\{e_\beta\}_{\beta=1}^n$ of $T\partial\Omega$. Define the boundary
operator $\mathcal{B}=(\mathcal{B}_1,\dots,\mathcal{B}_n)$ by
	\begin{align}\label{B}
	\mathcal{B}_\beta\left(x,w,\partial_x w\right)
=
\left\langle \partial_x\widetilde\gamma(x,t),\, e_\beta(\widetilde\gamma(x,t))\right\rangle,
\qquad
\beta=1,\dots,n,
\qquad x\in\partial I.
	\end{align}
Then $T_{\widetilde\gamma_t}\perp T\partial\Omega$ along $\partial I$ is exactly $\mathcal{B}(x,w,\partial_x w)=0$ on
$\partial I\times[0,\delta]$.

We verify obliqueness at $w=0$. Differentiating $\widetilde\gamma(x,t)=\Psi(x,w(x,t))$ in $x$ gives
	\begin{align*}
	\partial_x\widetilde\gamma=\partial_x\Psi(x,w)+D_w\Psi(x,w)\partial_x w.
	\end{align*}
At $w=0$, $D_w\Psi(x,0)[v]=\sum_{a=1}^n v^a\xi_a(\gamma_0(x))$. For $x\in\partial I$, the vectors
$\xi_a(\gamma_0(x))$ lie in $T_{\gamma_0(x)}\partial\Omega$. Since $\gamma_0$ meets $\partial\Omega$ orthogonally, $T\gamma_0(x)$ is
parallel to $\nu_{\partial\Omega}$, so $\dim T_{\gamma_0(x)}\partial\Omega=n$ and $\{\xi_a(\gamma_0(x))\}_{a=1}^n$ is an orthonormal
basis of $T_{\gamma_0(x)}\partial\Omega$. Therefore the linearisation of \eqref{B} in the $\partial_x w$ variable at $(w,p)=(0,0)$ is an isomorphism
	\begin{align*}
	D_{\partial_x w}\mathcal{B}(x,0,0)[v]=\left(\left\langle \sum_{a=1}^n v^a\xi_a(\gamma_0(x)),\, e_\beta(\gamma_0(x))\right\rangle\right)_{\beta=1}^n
	\end{align*}
and the resulting $n\times n$ matrix is invertible because both $\{\xi_a(\gamma_0(x))\}$ and $\{e_\beta(\gamma_0(x))\}$
are bases of $T_{\gamma_0(x)}\partial\Omega$. Hence $\mathcal{B}$ is an oblique boundary operator of full rank at $w=0$, with
linearisation of the schematic form
	\begin{align*}
	D\mathcal{B}(x,0,0)[v]=\Gamma(x)\partial_x v+\Lambda(x)v,\qquad \Gamma(x)\ \text{ invertible for }x\in\partial I.
	\end{align*}
The order $0$ compatibility holds because $w(\cdot,0)=0$ implies $\widetilde\gamma(\cdot,0)=\gamma_0$, and $\gamma_0$
satisfies the free-boundary conditions.

{\it Step 4}: Solve by linear theory plus the implicit function theorem.\\
	Linearising \eqref{derofw} and \eqref{B} at $w=0$ gives a strictly parabolic linear system on an interval with an oblique
boundary condition whose $\partial_x$ coefficient is invertible. In this one-dimensional setting, obliqueness implies the
complementing (Lopatinskii--Shapiro) condition, so standard Schauder theory for parabolic systems with oblique boundary
conditions yields existence, uniqueness, and estimates in $C^{2+\alpha,1+\alpha/2}$ for the linear problem. Therefore the
linearisation of the nonlinear operator $(\text{ PDE},\text{ BC},\text{ IC})$ at $w=0$ is an isomorphism between the relevant
H\"older spaces. The implicit function theorem then gives, for $\delta>0$ sufficiently small, a unique solution
	\begin{align*}
	w\in C^{2+\alpha,1+\alpha/2}(I\times[0,\delta]),
\qquad w(\cdot,0)=0,
	\end{align*}
to the nonlinear boundary value problem \eqref{derofw}, \eqref{B}. Parabolic bootstrapping implies $w$ is smooth for $t>0$,
hence $\widetilde\gamma$ is smooth for $t>0$.

{\it Step 5}: DeTurck reparametrisation to obtain the geometric flow and uniqueness up to reparametrisation.\\
	The gauge $\widetilde\gamma(x,t)=\Psi(x,w(x,t))$ fixes a parametrisation, so $\partial_t\widetilde\gamma$ may differ from the
geometric law $\vec\kappa$ by a tangential term. Let $V^{\top}$ be the tangential component of
$\partial_t\widetilde\gamma-\vec\kappa$. Solve the ODE for a family of diffeomorphisms $\varphi_t:I\to I$ with
$\varphi_t(\partial I)=\partial I$ so that the pullback curve
	\begin{align*}
	\gamma(\cdot,t):=\widetilde{\gamma}(\varphi_t(\cdot),t)
	\end{align*}
has vanishing tangential error. Then $\gamma$ satisfies $\partial_t\gamma=\vec\kappa$ and the free-boundary conditions. Uniqueness up to reparametrisation follows from uniqueness for the strictly parabolic boundary value problem for $w$ and uniqueness for the ODE defining $\varphi_t$.
\end{proof}

\section{Dilation-Invariant Estimates}

Let $M_t = \sup_{\gamma_t} \kappa^2 $. Then, for short time intervals $[t_0, t_0 + c/M_{t_0}] $, we have
	\begin{align*}
	|\partial_s^m T| \leq C_m M_{t_0}^{m/2},
	\end{align*}
for constants $C_m $ depending only on $m $, valid in the interior and near the boundary using suitable reflection techniques.

\subsection{Free-boundary adaptation of the dilation-invariant estimates}
\label{sec:FBCSF-adapt}

We show that Altschuler's interior derivative bounds remain valid up to a free boundary by combining Altschuler's interior computations with Stahl's localised maximum principle.

Fix $t_0$ and set
	\begin{align*}
	M_{t_0}:=\sup_{\gamma_{t_0}}\kappa^2,\qquad
I_{t_0}:=\Bigl[t_0,t_0+\frac{c}{M_{t_0}}\Bigr].
	\end{align*}
The goal is to prove, for every integer $m\ge 1$,
	\begin{align}\label{eqn_FBCSF-derivative}
	|\partial_s^m T|^2\le \frac{C_m M_{t_0}}{(t-t_0)^{m-1}},\qquad\text{ for all }t\in I_{t_0},
	\end{align}
with constants $C_m$ independent of $t_0$; compare Altschuler \cite[Part II, \S 3]{Altschuler91}.

\medskip

\begin{theorem}[cf.\cite{Stahl1996}, Theorem 3.1 and Section 6.3](Stahl--type localised maximum principle up to a free boundary).\label{thm_Stahl-localMP}
Let $\{\mathcal{M}_t\}_{t\in[t_0,t_0+\theta]}$ be a smooth solution of mean curvature flow in $\mathbb R^{n+1}$
with free boundary on a fixed smooth hypersurface $\partial\Omega$, satisfying
$\langle \nu_{\mathcal{M}_t},\nu_{\partial\Omega}\rangle=0$ along $\partial \mathcal{M}_t$. Assume $\partial\Omega$ has
$\|\mathrm{II}_{\partial\Omega}\|_{L^\infty}\le K$ and admits a tubular neighbourhood of radius $\rho>0$.

Fix $(p_0,t_0)\in \overline{\mathcal{M}_{t_0}}\times\{t_0\}$ and $r\in(0,\rho/8]$. Then there exist
$c_0=c_0(K,\rho)>0$, $C=C(K,\rho)\ge 1$, and a spacetime cut-off
	\begin{align*}
	\eta=\eta_{r,\theta}:\ \bigcup_{t\in[t_0,t_0+\theta]}M_t\times\{t\}\longrightarrow[0,1],
	\end{align*}
such that $\eta\equiv 0$ on the parabolic boundary of $Q_{r,\theta}(p_0,t_0)$, $\eta\ge \tfrac12$ on
$Q_{r/2,\theta/2}(p_0,t_0)$, and along $\partial \mathcal{M}_t$ one has the strict outward conormal inequality
	\begin{align}\label{eqn_eta-oblique}
	\partial_{\mu}\eta\le -c_0\eta.
	\end{align}
Moreover, if $Q\ge 0$ is $C^{2,1}$ on $Q_{r,\theta}(p_0,t_0)$ and satisfies
	\begin{align}\label{eqn_Stahl-struct}
	(\partial_t-\Delta)Q\le a_1 Q^2+a_2 Q+a_3,\qquad
\partial_{\mu}Q\le b_1 Q+b_2\quad\text{ on }\partial \mathcal{M}_t,
	\end{align}
for non-negative constants $a_i,b_i$ independent of $r,\theta$, then for $\Phi:=\eta^2 Q$ any large
parabolic maximum of $\Phi$ on $Q_{r,\theta}$ occurs either at $t=t_0$ or at an interior point.

Consequently,
	\begin{align}\label{eqn_Stahl-conclusion}
	\sup_{Q_{r/2,\theta/2}(p_0,t_0)} Q\le C'\left(\frac{1}{r^2}+\frac{1}{\theta}+1\right)
+\sup_{\{t=t_0\}\cap Q_{r,\theta}(p_0,t_0)} Q,
	\end{align}
for a constant $C'=C'(K,\rho,a_i,b_i)$.
\end{theorem}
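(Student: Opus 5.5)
The plan has three parts: build the cut-off $\eta$ from a distance function corrected near $\partial\Omega$, verify that the boundary terms have a sign at a maximum of $\Phi=\eta^2 Q$, and then run the usual Bernstein-type maximum principle argument at an interior maximum.

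\textbf{Construction of the cut-off.} Let $d(x)$ be the signed distance to $\partial\Omega$, smooth on the tubular neighbourhood of radius $\rho$ with $|D^2 d|\le C(K,\rho)$. Pick a smooth cut-off $\psi$ that equals $1$ for $d\ge\rho/2$ and satisfies $\psi(0)=0$, $\psi'(0)=1$, and set
\begin{align*}
\eta(x,t)=\Big(1-\frac{|x-p_0|^2+2n(t-t_0)}{r^2}\Big)_+\cdot\chi\Big(\frac{t-t_0}{\theta}\Big)\cdot e^{-c_0\psi(d(x))}.
\end{align*}
Here $\chi$ is a time cut-off that vanishes on the initial and final parts of the parabolic boundary as needed. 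The reflection-corrected factor $e^{-c_0\psi(d)}$ produces the oblique inequality \eqref{eqn_eta-oblique}: along $\partial\mathcal M_t$ the outward conormal $\mu$ equals $\pm\nu_{\partial\Omega}$, so $\partial_\mu\psi(d)$ is a fixed nonzero multiple of the normal derivative. A related danger is that the spatial bump $|x-p_0|^2$ has conormal derivative $2\langle x-p_0,\mu\rangle$, which can be positive when $p_0$ lies near $\partial\Omega$. I would handle this as Stahl does, replacing $|x-p_0|^2$ by $|x-\tilde p_0|^2$ modified via reflection across $\partial\Omega$. Using $\|\mathrm{II}_{\partial\Omega}\|\le K$ and $r\le\rho/8$, the conormal derivative of the spatial factor is then $\le C K r\,\eta/r^2$ up to lower order. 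Choosing $c_0$ large relative to this error (which depends only on $K,\rho$) gives $\partial_\mu\eta\le -c_0\eta$. The bounds $\eta\ge\frac12$ on $Q_{r/2,\theta/2}$ and $\eta=0$ on the parabolic boundary follow by choosing $\chi$ appropriately.

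\textbf{No large boundary maximum.} Suppose $\Phi=\eta^2Q$ attains a large maximum at a boundary point with $t>t_0$. At that point $\partial_\mu\Phi\ge 0$. On the other hand,
\begin{align*}
\partial_\mu\Phi=2\eta Q\,\partial_\mu\eta+\eta^2\partial_\mu Q\le \eta^2\big((b_1-2c_0)Q+b_2\big).
\end{align*}
If we choose $c_0>b_1$, which is legitimate since it only adds dependence on $b_i$, this is negative once $Q>b_2/(2c_0-b_1)$. So large maxima cannot occur on the boundary; this is the Hopf-type step.

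\textbf{Interior maximum.} At an interior maximum we have $\nabla\Phi=0$, which gives $\nabla Q=-2Q\nabla\eta/\eta$. We also have $(\partial_t-\Delta)\Phi\ge0$. Expanding,
\begin{align*}
0\le \eta^2(a_1Q^2+a_2Q+a_3)+Q\Big(2\eta(\partial_t-\Delta)\eta+6|\nabla\eta|^2\Big).
\end{align*}
Along mean curvature flow, $(\partial_t-\Delta)|x-p|^2=-2n$, so the factor $|x-p_0|^2+2n(t-t_0)$ was chosen to make $(\partial_t-\Delta)$ of the bump nonpositive. The other terms satisfy $|\nabla\eta|^2\lesssim r^{-2}+1$, while the $\chi$ and $\psi(d)$ contributions are $\lesssim\theta^{-1}+1$. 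Here we use $|H\langle\nabla d,\cdot\rangle|$ together with the fact that $\Delta_{\mathcal M}d$ involves $H$, which is absorbed via $(\partial_t-\Delta)d=-\operatorname{div}$ terms bounded by $K$.

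\textbf{Main obstacle and closing the estimate.} The quadratic term $a_1\eta^2Q^2$ cannot be beaten by the negative cut-off terms directly. I expect this to be the main difficulty. I would therefore apply the argument only on a time interval $\theta\le c/\sup_{t_0}Q$, matching how the estimate is used for $Q=\kappa^2$ on $I_{t_0}$, and use an ODE comparison $Q\le 2\sup_{t=t_0}Q$ there. With that, the quadratic term is linear with a controlled coefficient, and we get $\Phi_{\max}\le C'(r^{-2}+\theta^{-1}+1)+\sup_{t=t_0}Q$. Evaluating on $Q_{r/2,\theta/2}$, where $\eta^2\ge\frac14$, gives \eqref{eqn_Stahl-conclusion}.

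The two delicate points are the conormal sign of the spatial factor near $\partial\Omega$ and the treatment of the $Q^2$ term. If either fails in this form, I would fall back on Stahl's original reflected-distance cut-off and his short-time restriction.
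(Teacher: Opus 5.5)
Your route is what the paper's remark after the theorem intends: a boundary-adapted cut-off, a Hopf-type exclusion of large boundary maxima of $\Phi=\eta^2Q$, then a Bernstein computation at an interior maximum. The paper gives nothing beyond that remark and the citation of Stahl, so the step below is unjustified there too. The boundary part is sound once the cut-off is repaired in three places:
\begin{enumerate}
\item[(i)] With $d>0$ in $\Omega$ one has $\mu=-\nabla d$ at $\partial\mathcal M_t$, so $\partial_\mu e^{-c_0\psi(d)}=+c_0e^{-c_0\psi(d)}$; you need $e^{+c_0\psi(d)}$.
\item[(ii)] Near the edge of the support, the conormal derivative $-2\langle x-p_0,\mu\rangle/r^2$ of the paraboloid factor can be positive of size $O(K)$ where $\eta$ is arbitrarily small. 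It is not $O(\eta)$, so it must be given a sign rather than absorbed, e.g.\ by replacing $|x-p_0|^2$ with $|x-p_0|^2-\beta r^2 d(x)$, $\beta\sim K$.
\item[(iii)] The requirement $c_0>b_1/2$ makes $c_0$ depend on $b_1$.
\end{enumerate}

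The genuine gap is the interior step, and it is worse than the $a_1Q^2$ term you flag. At the first time $\Phi$ reaches a large value you only obtain
\begin{align*}
0\le \eta^2\bigl(a_1Q^2+a_2Q+a_3\bigr)+Q\bigl(2\eta(\partial_t-\Delta)\eta+6|\nabla\eta|^2\bigr).
\end{align*}
The right-hand side has no negative term that dominates at large $\Phi$. Already $6Q|\nabla\eta|^2=6\Phi|\nabla\eta|^2/\eta^2$ cannot be absorbed, so there is no contradiction and no bound. A Bernstein or Ecker--Huisken localisation closes only when the evolution supplies an absorbing term, e.g.\ $(\partial_t-\Delta)Q\le-\delta Q^2+C$. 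Multiplying by $\eta^2$ at the maximum then gives $\delta\Phi^2\le C\Phi(r^{-2}+\theta^{-1}+1)+C$.

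In fact no argument can close, because \eqref{eqn_Stahl-conclusion} is false under the stated hypotheses. Work on the static half-hyperplane meeting a flat barrier orthogonally ($K=0$).
\begin{enumerate}
\item[(a)] Take $\theta=r^2$, $|y-p_0|=\tfrac32\,r$, $\Gamma(z,s)=(4\pi s)^{-n/2}e^{-|z|^2/4s}$, $\bar x$ the reflection of $x$, and $Q=A\bigl(\Gamma(x-y,t-t_0+\varepsilon)+\Gamma(\bar x-y,t-t_0+\varepsilon)\bigr)$. Then $(\partial_t-\Delta)Q=0$ and $\partial_\mu Q=0$. Normalising $\sup_{\{t=t_0\}\cap Q_{r,\theta}}Q\le 1$ gives $Q(p_0,t_0+\theta/4)\ge c_n(\varepsilon/r^2)^{n/2}e^{r^2/(16\varepsilon)}\to\infty$ as $\varepsilon\to0$: heat enters through the lateral boundary.
\item[(b)] For $a_1>0$, take a spatially constant $Q=q(t)$ with $q'=a_1\beta q^2$, where $0\le\beta\le 1$, $\beta\equiv1$ on $[t_0,t_0+\theta/4]$, and $\beta$ is switched off within time $\delta/2$ afterwards. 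With $q(t_0)=\bigl(a_1(\tfrac\theta4+\delta)\bigr)^{-1}\le 4/(a_1\theta)$, one gets $q(t_0+\theta/4)=(a_1\delta)^{-1}$, which is unbounded as $\delta\to0$.
\end{enumerate}
The statement is also internally inconsistent: $\eta\equiv0$ on the whole parabolic boundary, which contains $\{t=t_0\}$, contradicts $\eta\ge\frac12$ at $(p_0,t_0)$.

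Your fallback changes the theorem rather than proving it. The comparison $Q\le 2\sup_{t=t_0}Q$ on a local cylinder is refuted by example (a), taking $r$ small so that $\theta\le c/\sup_{t_0}Q$. It is only available when $\sup_{t_0}Q$ is taken over all of $\mathcal M_{t_0}$, with boundary maxima excluded by your weight. But then you already have a global bound, and the cut-off plays no role.

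A usable local estimate needs the extra input that Altschuler's and Shi-type derivative estimates rely on:
\begin{enumerate}
\item[(1)] an a priori curvature bound on all of $[t_0,t_0+c/M_{t_0}]$;
\item[(2)] a quantity built so that its evolution carries a good term;
\item[(3)] the initial supremum entering through a constant.
\end{enumerate}
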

\begin{remark}
Stahl does not state \eqref{eqn_Stahl-conclusion} verbatim; it follows by applying
the weak/strong maximum principle to $\Phi=\eta^2 Q$, with $\eta$ chosen via the
shifted-paraboloid construction in \cite[Section 6.3]{Stahl1996} to preclude large
boundary maxima.
\end{remark}
\medskip

The interior evolution identities for curvature and its derivatives are unchanged by the presence
of a free boundary. The only additional input is a boundary control for the conormal derivative of
the relevant quantities.

\begin{lemma}[Evolution and structural inequalities]\label{lem_evol-struct}
Let $\gamma_t\subset\mathbb R^{n+1}$ be a smooth curve evolving by CSF $\partial_t\gamma=\kappa N$,
with arclength parameter $s$ and first torsion $\tau_1$. Then,
\begin{enumerate}
\item[(i)] The curvature evolves by
	\begin{align}\label{eqn_evol-kappa}
	\partial_t\kappa=\partial_s^2\kappa+\kappa^3-\tau_1^2\kappa.
	\end{align}
Consequently,
	\begin{align}\label{eqn_evol-kappa2}
	(\partial_t-\partial_s^2)\kappa^2=-2(\partial_s\kappa)^2+2\kappa^4-2\tau_1^2\kappa^2\le 2(\kappa^2)^2.
	\end{align}
\item[(ii)] The commutator satisfies
	\begin{align}\label{eqn_commute-ts}
	[\partial_t,\partial_s]=\kappa^2\,\partial_s.
	\end{align}
\item[(iii)] For each $m\ge 1$ there are constants $A_m,B_m$ depending only on $m$ and on bounds for
lower order quantities such that, writing $Q_m:=|\partial_s^m\kappa|^2$, one has a structural inequality
of the form
	\begin{align}\label{eqn_struct-Qm}
	(\partial_t-\partial_s^2)Q_m\le A_m Q_m^2+B_m Q_m
	\end{align}
on any spacetime region where the lower order derivatives entering the coefficients are controlled.
\end{enumerate}
\end{lemma}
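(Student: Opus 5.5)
The proof works entirely with the Frenet frame $(T,N,B_1,B_2,\dots)$ and the Frenet--Serret equations $T_s=\kappa N$, $N_s=-\kappa T+\tau_1B_1$, $(B_1)_s=-\tau_1N+\tau_2B_2$, together with the flow law $\partial_t\gamma=\kappa N$. We establish (ii) first, because the other items use it.

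\emph{Commutator.} Let $u$ be a fixed parameter, so that $\partial_s=|\gamma_u|^{-1}\partial_u$. Then
\begin{align*}
\partial_t|\gamma_u|^2=2\langle \partial_u(\kappa N),\gamma_u\rangle=2|\gamma_u|^2\langle \partial_s(\kappa N),T\rangle=-2\kappa^2|\gamma_u|^2,
\end{align*}
where the last step uses $\langle N_s,T\rangle=-\kappa$. Hence $\partial_t|\gamma_u|=-\kappa^2|\gamma_u|$. Differentiating $\partial_s=|\gamma_u|^{-1}\partial_u$ in $t$ gives $\partial_t\partial_s=\partial_s\partial_t+\kappa^2\partial_s$, which is (ii).

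\emph{Curvature.} By (ii),
\begin{align*}
\partial_tT=\partial_s(\kappa N)+\kappa^2T=\kappa_sN+\kappa\tau_1B_1.
\end{align*}
This also equals $\partial_s^2T+\kappa^2T$, recovering \eqref{eqn_evol-T}. Next we write $\kappa^2=|\partial_sT|^2$ and differentiate:
\begin{align*}
\partial_t\kappa^2=2\langle\partial_s\partial_tT+\kappa^2\partial_sT,\partial_sT\rangle=2\kappa\langle\partial_s(\kappa_sN+\kappa\tau_1B_1),N\rangle+2\kappa^4.
\end{align*}
The Frenet equations give $\langle\partial_s(\kappa_sN+\kappa\tau_1B_1),N\rangle=\kappa_{ss}-\kappa\tau_1^2$, so
\begin{align*}
\partial_t\kappa^2=2\kappa\kappa_{ss}-2\tau_1^2\kappa^2+2\kappa^4.
\end{align*}
Where $\kappa\neq0$ we divide by $2\kappa$ and obtain \eqref{eqn_evol-kappa}. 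We do not need that division for the squared quantity: using $\partial_s^2\kappa^2=2\kappa\kappa_{ss}+2\kappa_s^2$, the identity above becomes the equality in \eqref{eqn_evol-kappa2} wherever $\kappa$ is smooth. To handle points where $\kappa$ vanishes, we use $|\vec\kappa|^2$ instead, which is always smooth. The computation is the same, and Kato's inequality $|\partial_s|\vec\kappa||\le|\nabla^\perp_s\vec\kappa|$ gives the inequality form. Finally, we drop the nonpositive terms $-2\kappa_s^2-2\tau_1^2\kappa^2$ to get $\le2(\kappa^2)^2$.

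\emph{Higher derivatives.} We prove (iii) by induction on $m$ and by commuting. Suppose $\partial_t\partial_s^{m}\kappa=\partial_s^{m+2}\kappa+P_m$, where $P_m$ is a polynomial in $\partial_s^j\kappa$ and $\partial_s^j\tau_1$ (with $j\le m$) and in $\tau_2$. Applying $\partial_s$ and using (ii) gives
\begin{align*}
P_{m+1}=\partial_sP_m+\kappa^2\partial_s^{m+1}\kappa.
\end{align*}
We then compute
\begin{align*}
(\partial_t-\partial_s^2)Q_m=-2|\partial_s^{m+1}\kappa|^2+2\,\partial_s^m\kappa\, P_m.
\end{align*}
Now we use the standing hypothesis that the lower order quantities are bounded. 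The terms of $P_m$ that are linear in $\partial_s^m\kappa$ have bounded coefficients. They contribute at most $CQ_m$. The remaining terms are bounded by $C(1+Q_m^{1/2})$, so they contribute at most $C(Q_m^{1/2}+Q_m)$. By Young's inequality, these pieces are absorbed into $A_mQ_m^2+B_mQ_m$ wherever $Q_m\ge1$. Where $Q_m<1$ they are bounded by a constant, which is harmless when applying Theorem~\ref{thm_Stahl-localMP}, since it allows an $a_3$ term.

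Any top-order terms that cannot be bounded through the lower order quantities, such as $\partial_s^{m+1}\tau_1$, we absorb into the good term $-2|\partial_s^{m+1}\kappa|^2$. To make this work cleanly, we would carry out the estimate for the vector quantity $(\nabla^\perp_s)^m\vec\kappa$, following Altschuler. This avoids torsion singularities at zeros of $\kappa$.

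\emph{Main obstacle.} The hardest step is the bookkeeping for the torsion terms in (iii). The torsion $\tau_1$ is singular where $\kappa=0$, so we work with normal-connection derivatives of $\vec\kappa$ rather than the scalars $\partial_s^m\kappa$ and $\tau_1$. We then interpret the structural inequality for $Q_m$ as an inequality for $|(\nabla^\perp_s)^m\vec\kappa|^2$. This quantity controls $|\partial_s^{m+1}T|$ modulo lower order terms.
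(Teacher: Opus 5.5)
Your computations for (ii) and (i) are correct. They are the standard derivation the paper takes for granted: it gives no proof of this lemma, only Theorem~\ref{thm_interior-evolution} and a pointer to Altschuler. Kato is not needed. The identity $(\partial_t-\partial_s^2)|\vec\kappa|^2=-2|\nabla_s^\perp\vec\kappa|^2+2|\vec\kappa|^4$, together with $|\nabla_s^\perp\vec\kappa|^2=\kappa_s^2+\kappa^2\tau_1^2$ where $\kappa\neq0$, gives both the equality and the bound $\le 2(\kappa^2)^2$ everywhere.

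The gap is in (iii). Your recursion gives $P_m=\partial_s^m(\kappa^3-\tau_1^2\kappa)+\sum_{j=0}^{m-1}\partial_s^j(\kappa^2\partial_s^{m-j}\kappa)$, so neither $\tau_2$ nor $\partial_s^{m+1}\tau_1$ ever appears. The term that actually causes trouble is $-2\kappa\tau_1\,\partial_s^m\tau_1$, which puts $-4\kappa\tau_1\,\partial_s^m\kappa\,\partial_s^m\tau_1$ into $(\partial_t-\partial_s^2)Q_m$. This has the same scaling as $Q_m$, and it is not bounded by $Q_m$ together with derivatives of $\kappa$ of order $<m$. It also cannot be absorbed into $-2|\partial_s^{m+1}\kappa|^2$: that good term contains no torsion, and no pointwise inequality bounds $\partial_s^m\tau_1$ by $\partial_s^{m+1}\kappa$. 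So the absorption step, as written, fails.

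Your closing suggestion is the right repair, but it changes the statement. There are two ways to proceed.
\begin{enumerate}
\item[(a)] Declare $\partial_s^j\tau_1$, $j\le m$, to be among the controlled ``lower order'' quantities. Then your bound $C(Q_m+Q_m^{1/2})$ for $2\,\partial_s^m\kappa\,P_m$ is immediate.
\item[(b)] Replace $Q_m$ by $|(\nabla_s^\perp)^m\vec\kappa|^2$. The torsion is then built into the quantity: the $B_1$-component of $(\nabla_s^\perp)^m\vec\kappa$ is $\kappa\,\partial_s^{m-1}\tau_1$ plus lower order. The reaction terms are $\sum_{i+j+k=m}(\nabla_s^\perp)^i\vec\kappa*(\nabla_s^\perp)^j\vec\kappa*(\nabla_s^\perp)^k\vec\kappa$ paired with $(\nabla_s^\perp)^m\vec\kappa$. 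Nothing needs absorbing and no division by $\kappa$ occurs.
\end{enumerate}
Option (b) is what the paper actually relies on when it invokes Altschuler's quantity controlling $|\partial_s^mT|$ in proving \eqref{eqn_FBCSF-derivative}.

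Separately, your additive constant is forced, not a convenience. Take a planar curve with $\kappa=\kappa_s=1$, $\partial_s^2\kappa=\e$, $\partial_s^3\kappa=0$ at a point. Then $P_2=8\kappa\kappa_s^2+5\kappa^2\partial_s^2\kappa$, so $(\partial_t-\partial_s^2)Q_2=16\e+10\e^2$. No $A_2Q_2^2+B_2Q_2=A_2\e^4+B_2\e^2$ dominates this as $\e\to0$. What you can actually prove is $(\partial_t-\partial_s^2)Q_m\le A_mQ_m^2+B_mQ_m+C_m$, i.e.\ the form \eqref{eqn_Stahl-struct} with $a_3>0$ that is used in \eqref{eqn_Altschuler-interior}. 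You should state this explicitly rather than assert the constant-free inequality, which is false.
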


\begin{lemma}[Endpoint control]\label{lem_endpoint-control}
Let $\gamma_t\subset\mathbb R^{n+1}$ solve curve shortening flow with free boundary on a smooth
surface $\partial\Omega$, meeting $\partial\Omega$ orthogonally along $\partial\gamma_t$, and assume
$\|\mathrm{II}_{\partial\Omega}\|_{L^\infty}\le K$. Then on $\partial\gamma_t$,
	\begin{align}\label{eqn_endpoint-kappa}
	|\partial_{\mu}\kappa|\le K|\kappa|,\qquad |\tau_1|\le K,
	\end{align}
where $\mu$ is the outward conormal to $\partial\gamma_t$ in $\gamma_t$.
Consequently,
	\begin{align}\label{eqn_endpoint-kappa2}
	\partial_{\mu}(\kappa^2)\le 2K\kappa^2,\qquad\text{ on }\partial\gamma_t.
	\end{align}
\end{lemma}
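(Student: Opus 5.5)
The plan is to differentiate the orthogonality condition along the boundary trajectory and compare the result with the interior evolution of the tangent vector \eqref{eqn_evol-T}. Fix an endpoint $p\in\partial I$. By the Remark following the definition of FBCSF, $T(p,t)=\pm\nu_{\partial\Omega}(\gamma(p,t))$ for all $t$, with the sign constant in $t$ by continuity. The flow $\partial_t\gamma=\kappa N$ is purely normal, and the parameter $p$ is fixed, so the endpoint curve $t\mapsto\gamma(p,t)$ lies in $\partial\Omega$. Hence its velocity $\kappa N(p,t)$ is tangent to $\partial\Omega$. This is consistent with $\kappa N\perp T=\pm\nu_{\partial\Omega}$.

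First, differentiate $T(p,t)=\pm\nu_{\partial\Omega}(\gamma(p,t))$ in $t$. By the chain rule,
\begin{align*}
\partial_t T(p,t)=\pm D\nu_{\partial\Omega}\big|_{\gamma(p,t)}\bigl(\kappa N\bigr)=\pm\,\mathcal W_{\partial\Omega}(\kappa N),
\end{align*}
where $\mathcal W_{\partial\Omega}$ is the Weingarten map of $\partial\Omega$, applied to the tangent vector $\kappa N\in T\partial\Omega$. The assumption $\|\mathrm{II}_{\partial\Omega}\|_{L^\infty}\le K$ gives $|\partial_t T(p,t)|\le K|\kappa|$.

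Second, use \eqref{eqn_evol-T}, which holds up to the endpoint by smoothness:
\begin{align*}
\partial_t T=\kappa_s N+\kappa\tau_1 B_1 .
\end{align*}
Since $N$ and $B_1$ are orthonormal, $|\partial_t T|^2=\kappa_s^2+\kappa^2\tau_1^2$. Combining with the first step gives both $|\kappa_s|\le K|\kappa|$ and $|\kappa\tau_1|\le K|\kappa|$. The outward conormal satisfies $\mu=\pm\partial_s$ at the endpoint, so $|\partial_\mu\kappa|=|\kappa_s|\le K|\kappa|$. Wherever $\kappa\neq0$ (which is exactly where $\tau_1$ and the Frenet frame are defined), dividing by $|\kappa|$ gives $|\tau_1|\le K$. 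Finally,
\begin{align*}
\partial_\mu(\kappa^2)=2\kappa\,\partial_\mu\kappa\le 2|\kappa||\partial_\mu\kappa|\le 2K\kappa^2,
\end{align*}
which is \eqref{eqn_endpoint-kappa2}.

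The main obstacle is justifying the first step rigorously. One must check that differentiating the boundary identity in $t$ at fixed parameter is legitimate, which needs smoothness up to $\partial I$ (available for $t>0$ from the short-time existence theorem), and that no tangential reparametrisation term enters. The geometric flow is purely normal and the endpoints of $I$ are fixed, so no such term arises. A secondary issue is the case $\kappa=0$ at the endpoint, where $N$ and $B_1$ are undefined. There I would argue with the curvature vector $\vec\kappa=\partial_s T$ instead: $\partial_t T=\partial_s\vec\kappa+|\vec\kappa|^2T$ has norm at most $K|\vec\kappa|=0$ at that point. Then $\partial_s\vec\kappa=0$, so the bound $|\partial_\mu\kappa|\le K|\kappa|$ still holds in the sense of the vector-valued curvature. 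The torsion bound is only asserted where $\kappa\neq0$.
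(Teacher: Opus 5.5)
Your proposal is correct and follows the paper's argument. You differentiate $T(p,t)=\pm\nu_{\partial\Omega}(\gamma(p,t))$ in time, express $\partial_t T$ through the shape operator of $\partial\Omega$ applied to the tangential endpoint velocity $\kappa N$, and compare with $\partial_t T=\kappa_s N+\kappa\tau_1 B_1$. The only difference is in the comparison step. You take norms, $\kappa_s^2+\kappa^2\tau_1^2=|\partial_t T|^2\le K^2\kappa^2$. The paper instead matches $N$- and $B$-components to get the exact identities $\partial_s\kappa=-\e\,\kappa\,\mathrm{II}_{\partial\Omega}(N,N)$ and $\tau_1=-\e\,\mathrm{II}_{\partial\Omega}(N,B)$, which it reuses in the higher-derivative boundary estimates. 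Your norm version does not need the paper's a priori decomposition $S(N)=\mathrm{II}_{\partial\Omega}(N,N)N+\mathrm{II}_{\partial\Omega}(N,B)B$, which for $n\ge 3$ is only a consequence of the comparison. It also handles the points where $\kappa=0$, which the paper leaves untreated.
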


\begin{proof}
Fix an endpoint $p\in\partial\gamma_t$. The free boundary condition gives
	\begin{align*}
	T(p,t)=\e\,\nu_{\partial\Omega}(p),\qquad \e=\pm 1,
	\end{align*}
so $N(p,t)$ and $B(p,t)$ lie in $T_p\partial\Omega$. The endpoint velocity is
$\partial_t\gamma=\kappa N\in T_p\partial\Omega$.

Differentiate $T=\e\,\nu_{\partial\Omega}$ in time at $p$:
	\begin{align*}
	\partial_t T=\e\, D_{\partial_t\gamma}\nu_{\partial\Omega}=-\e\, S(\partial_t\gamma)=-\e\,\kappa\, S(N),
	\end{align*}
where $S$ is the shape operator of $\partial\Omega$. Decompose
	\begin{align*}
	S(N)=\mathrm{II}_{\partial\Omega}(N,N)\,N+\mathrm{II}_{\partial\Omega}(N,B)\,B,
	\end{align*}
and use the Frenet--Serret evolution identity
	\begin{align*}
	\partial_t T=(\partial_s\kappa)\,N+\kappa\tau_1\,B.
	\end{align*}
Comparing $N$ and $B$ components yields
	\begin{align}\label{sffK}
	\partial_s\kappa=-\e\,\kappa\,\mathrm{II}_{\partial\Omega}(N,N),\qquad \tau_1=-\e\,\mathrm{II}_{\partial\Omega}(N,B).
	\end{align}
Since $\partial_{\mu}=\pm\partial_s$ at an endpoint, \eqref{eqn_endpoint-kappa} follows and
\eqref{eqn_endpoint-kappa2} is immediate from $\partial_{\mu}(\kappa^2)=2\kappa\,\partial_{\mu}\kappa$.
\end{proof}

\begin{lemma}[Boundary reduction for $T$-derivatives]\label{lem_boundary-reduction}
At an endpoint, $\partial_\mu=\pm\partial_s$. Moreover, for each $m\ge 1$ there exist universal
polynomials $P_m,\widetilde P_m$, such that
	\begin{align*}
	\partial_s^mT=P_m\big(\kappa,\tau_1,\partial_s\kappa,\partial_s\tau_1,\dots,\partial_s^{m-1}\kappa \partial_s^{m-2}\tau_1\big),
	\end{align*}
	\begin{align*}
	\partial_\mu(|\partial_s^mT|^2)=\pm 2\langle \partial_s^mT,\partial_s^{m+1}T\rangle =\widetilde P_m\big(\kappa,\tau_1,\dots,\partial_s^m\kappa,\partial_s^{m-1}\tau_1\big).
	\end{align*}
Consequently, on any spacetime region where the lower order derivatives appearing in
$\widetilde P_m$ are bounded, one has a boundary inequality of the form
	\begin{align*}
	\partial_\mu Q_m\le b_1 Q_m+b_2,
	\end{align*}
with $b_1,b_2$ depending only on $m$, $K$, and those lower order bounds.
\end{lemma}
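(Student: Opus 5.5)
The plan is a structural induction on $m$ using the Frenet--Serret equations in the generalized frame $(T,N,B_1,B_2,\dots)$, with curvatures $\kappa,\tau_1,\tau_2,\dots$. I would establish the first identity first, then read off the second, and then convert it into the boundary inequality.

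\emph{The polynomial $P_m$.} Start from $\partial_sT=\kappa N$, $\partial_sN=-\kappa T+\tau_1B_1$ and $\partial_sB_1=-\tau_1N+\tau_2B_2$. Differentiating once more gives
\begin{align*}
\partial_s^2T=\partial_s\kappa\,N-\kappa^2T+\kappa\tau_1B_1.
\end{align*}
I would then prove by induction that
\begin{align*}
\partial_s^mT=\sum_{j} c^{(m)}_j\,E_j,
\end{align*}
where the $E_j$ run over the Frenet frame vectors and each coefficient $c^{(m)}_j$ is a universal polynomial in $\kappa,\dots,\partial_s^{m-1}\kappa$ and $\tau_1,\dots,\partial_s^{m-2}\tau_1$, together with higher torsions entering through $\partial_sB_k$. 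The inductive step is the product rule: $\partial_s$ raises the derivative order of a coefficient by one, and $\partial_sE_j$ brings in one extra factor of curvature or torsion. Counting orders shows that $\kappa$-derivatives stay at order at most $m-1$ and $\tau_1$-derivatives at order at most $m-2$.

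The higher torsions $\tau_k$, $k\ge2$, are the genuine complication, since they are not among the listed arguments of $P_m$. I would handle this in one of two ways. The first option is to reinterpret $P_m$ as the polynomial in the Frenet coefficients obtained by expanding $\partial_s^mT$, with the $\tau_k$ treated as lower-order quantities. The second option avoids them entirely: I would note that only $|\partial_s^mT|^2$ and its conormal derivative are ever used, and $|\partial_s^mT|^2=\sum_j(c^{(m)}_j)^2$ is a frame-independent expression. Its top-order part is $|\partial_s^{m-1}\kappa|^2+\kappa^2|\partial_s^{m-2}\tau_1|^2$, plus lower-order terms.

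\emph{The conormal derivative.} At an endpoint $\partial_\mu=\pm\partial_s$, so
\begin{align*}
\partial_\mu|\partial_s^mT|^2=\pm2\langle\partial_s^mT,\partial_s^{m+1}T\rangle.
\end{align*}
Applying the representation above at levels $m$ and $m+1$ gives $\widetilde P_m$ as a polynomial in derivatives up to $\partial_s^m\kappa$ and $\partial_s^{m-1}\tau_1$.

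\emph{The boundary inequality.} The top-order terms of $\widetilde P_m$ are linear in the highest derivatives, multiplied by $\partial_s^{m-1}\kappa$ and similar factors. I expect this to be the main obstacle: a top-order factor such as $\partial_s^m\kappa$ at the endpoint must be controlled by $Q_m$ and the lower-order bounds, rather than left as a genuinely higher-order quantity. I would do this by differentiating the endpoint identities \eqref{sffK} tangentially in time and using $[\partial_t,\partial_s]=\kappa^2\partial_s$. Since the endpoint moves along $\partial\Omega$, this expresses odd $s$-derivatives of $\kappa$ and $\tau_1$ at the boundary through lower-order quantities times derivatives of $\mathrm{II}_{\partial\Omega}$, which are bounded in terms of $K$. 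Even derivatives are exchanged for time derivatives via \eqref{eqn_evol-kappa} and \eqref{eqn_evol-tau}. With this in hand, Cauchy--Schwarz, $2ab\le a^2+b^2$ and the lower-order bounds give
\begin{align*}
\partial_\mu Q_m\le b_1Q_m+b_2.
\end{align*}
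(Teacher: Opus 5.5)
\paragraph{The gap: the top-order terms for even $m$.} Your argument breaks down at exactly the step you flagged, and it does so for every even $m$. Differentiating the endpoint relations \eqref{sffK} in time, using $[\partial_t,\partial_s]=\kappa^2\partial_s$, is legitimate. What it determines at the endpoint (in terms of one order lower and $\nabla^j\mathrm{II}_{\partial\Omega}$) are the odd $s$-derivatives of $\kappa$ and the \emph{even} $s$-derivatives of $\tau_1$, starting with $\tau_1=-\e\,\mathrm{II}_{\partial\Omega}(N,B)$ itself. It does not determine the odd derivatives of $\tau_1$, as you wrote.

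The even derivatives of $\kappa$ and the odd derivatives of $\tau_1$ are free Neumann data. Rewriting $\partial_s^2\kappa=\partial_t\kappa-\kappa^3+\tau_1^2\kappa$ gains nothing, because $\partial_t\kappa$ at the endpoint is no better controlled. For even $m$, the leading part of $2\langle\partial_s^mT,\partial_s^{m+1}T\rangle$ is $2\partial_s^{m-1}\kappa\,\partial_s^m\kappa+2\kappa^2\partial_s^{m-2}\tau_1\,\partial_s^{m-1}\tau_1$. Each term is a determined but generically nonzero lower-order coefficient times a free order-$(m+1)$ quantity. For $m=2$ it equals $-2\e\kappa\,\mathrm{II}_{\partial\Omega}(N,N)\,\partial_s^2\kappa-2\e\kappa^2\,\mathrm{II}_{\partial\Omega}(N,B)\,\partial_s\tau_1$. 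No Cauchy--Schwarz argument absorbs this into $b_1Q_m+b_2$.

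For odd $m$ the top-order factors are the determined ones, so your scheme can close there. The constants then depend on $\nabla^j\mathrm{II}_{\partial\Omega}$, not only on $K$ as you assert.

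\paragraph{Why the paper's device does not fill the gap.} The paper treats $m=2$ differently. It applies $\partial_\mu$ to the endpoint identity $\partial_tT=-\e\kappa S(N)$ and compares $N$- and $B$-coefficients. This gives $|\partial_s^2\kappa|\le C'_K|\partial_s\kappa|+C''_K|\kappa|$ and $|\partial_s\tau_1|\le C_K$ at the endpoint, after which the bound on $\partial_\mu|\partial_s^2T|^2$ is immediate.

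You should not import this. That identity comes from $T=\e\,\nu_{\partial\Omega}$, which holds only at the endpoint, so it cannot be differentiated along $\gamma_t$. Your choice to differentiate only in time is the correct one. Against a flat barrier, the reflected flow is a smooth CSF with $\kappa$ even in $s$. For instance, the flow from the arc $y=x^4$, $x\ge 0$ (which meets $\{x=0\}$ orthogonally) has $\partial_s^2\kappa\approx 24$ at the endpoint for small $t>0$. The paper's bound would force $\partial_s^2\kappa=0$ there.

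The same freedom persists on a curved barrier, where now $\kappa\,\mathrm{II}_{\partial\Omega}(N,N)\neq 0$. So $\partial_\mu|\partial_s^2T|^2\le b_1|\partial_s^2T|^2+b_2$ cannot hold with $b_1,b_2$ independent of $\partial_s^2\kappa$ at the endpoint. For even $m$ you therefore need a genuinely different ingredient rather than an algebraic reduction at the endpoint. Two options are a boundary-corrected quantity in place of $|\partial_s^mT|^2$, or flattening the barrier and using reflection plus parabolic Schauder theory for the oblique problem.
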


We first record the scale-invariant curvature bound up to the boundary.

\begin{lemma}[Local pointwise curvature estimate up to a free boundary]\label{lem_local-kappa2}
Let $\gamma_t$ solve free-boundary curve shortening flow on $\partial\Omega$ with
$\|\mathrm{II}_{\partial\Omega}\|_{L^\infty}\le K$ and tubular radius $\rho>0$. Fix $(p_0,t_0)\in\overline{\gamma_{t_0}}$,
choose $r\in(0,\rho/8]$ and $\theta\le r^2$, and let $\eta$ be the cutoff from
Theorem \ref{thm_Stahl-localMP}. Then
	\begin{align}\label{eqn_local-kappa2}
	\sup_{Q_{r/2,\theta/2}(p_0,t_0)}\kappa^2 \le C(K,\rho)\left(\frac{1}{r^2}+\frac{1}{\theta}\right) +\sup_{\{t=t_0\}\cap Q_{r,\theta}(p_0,t_0)}\kappa^2.
	\end{align}
\end{lemma}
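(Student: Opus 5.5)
We apply Theorem \ref{thm_Stahl-localMP} with $Q:=\kappa^2$ on the parabolic cylinder $Q_{r,\theta}(p_0,t_0)$, viewing the curve $\gamma_t$ as a one-dimensional free-boundary mean curvature flow, so that $\Delta=\partial_s^2$ and the conormal $\mu$ is $\pm\partial_s$ at the endpoints. The proof reduces to checking the two structural hypotheses \eqref{eqn_Stahl-struct} with constants independent of $r$ and $\theta$.

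\emph{Interior inequality.} By Lemma \ref{lem_evol-struct}(i), \eqref{eqn_evol-kappa2} gives
\begin{align*}
(\partial_t-\partial_s^2)\kappa^2=-2(\partial_s\kappa)^2+2\kappa^4-2\tau_1^2\kappa^2\le 2(\kappa^2)^2 .
\end{align*}
This is the interior inequality with $a_1=2$ and $a_2=a_3=0$. The torsion term has a good sign and is discarded, so no control of $\tau_1$ in the interior is needed. This matters in high codimension, where $\tau_1$ is a priori unbounded and even undefined where $\kappa=0$. Since $\kappa^2=|\vec\kappa|^2$ is smooth, the inequality holds everywhere. At points where $\kappa=0$ one can argue with $|\vec\kappa|^2$ directly, or note that the inequality is trivially satisfied there in the barrier sense.

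\emph{Boundary inequality.} By Lemma \ref{lem_endpoint-control}, \eqref{eqn_endpoint-kappa2} gives $\partial_\mu(\kappa^2)\le 2K\kappa^2$ on $\partial\gamma_t$. This is the boundary inequality with $b_1=2K$ and $b_2=0$. All constants therefore depend only on $K$, and \eqref{eqn_Stahl-conclusion} yields
\begin{align*}
\sup_{Q_{r/2,\theta/2}(p_0,t_0)}\kappa^2\le C'\Bigl(\frac1{r^2}+\frac1\theta+1\Bigr)+\sup_{\{t=t_0\}\cap Q_{r,\theta}(p_0,t_0)}\kappa^2 .
\end{align*}
To absorb the additive $1$, we use $r\le\rho/8$, so that $1\le 64\rho^{-2}\cdot r^{-2}\cdot r^2\le C(\rho)\,r^{-2}$ (indeed $r^2\le\rho^2/64$). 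Together with $\theta\le r^2$, this gives \eqref{eqn_local-kappa2} with $C=C(K,\rho)$.

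\emph{Main obstacle.} The delicate point is the quadratic reaction term $a_1Q^2$. The conclusion of Theorem \ref{thm_Stahl-localMP} is a genuine local bound only if the maximum principle applied to $\Phi=\eta^2\kappa^2$ can beat the superlinear growth. For a quadratic nonlinearity this typically holds only on a time scale comparable to $(\sup\kappa^2)^{-1}$, which is exactly the role of the restriction $\theta\le r^2$ together with the $1/\theta$ term. I would therefore check the maximum-point computation explicitly. At a large interior maximum of $\Phi$, combine $\partial_s\Phi=0$ with the cut-off bounds $|\partial_s\eta|\lesssim r^{-1}$ and $|(\partial_t-\partial_s^2)\eta|\lesssim r^{-2}+\theta^{-1}$, and run an ODE comparison for $\sup\Phi$ in $t$. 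This shows $\Phi$ cannot exceed the stated bound before time $\theta$, provided $\theta$ is small relative to the initial data term; if necessary, $\theta$ is shrunk accordingly. The oblique condition \eqref{eqn_eta-oblique}, combined with $b_1=2K$ and with $c_0$ chosen larger than $2K$ through the shifted-paraboloid construction, gives $\partial_\mu\Phi<0$ at large boundary values. This rules out boundary maxima, which is the point of invoking Stahl's cut-off rather than a naive one.
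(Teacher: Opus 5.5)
Your proof is the paper's: you check the hypotheses of Theorem \ref{thm_Stahl-localMP} for $Q=\kappa^2$, with $a_1=2$ and $a_2=a_3=0$ from Lemma \ref{lem_evol-struct}(i) and $b_1=2K$, $b_2=0$ from Lemma \ref{lem_endpoint-control}, and then apply the theorem. Absorbing the additive $1$ into $r^{-2}$ is a detail the paper leaves implicit, but your display inverts the constant (it should read $1=r^2\cdot r^{-2}\le(\rho^2/64)\,r^{-2}$); your closing paragraph is unnecessary once the theorem is taken as given, and a data-dependent shrinking of $\theta$ would not be permitted anyway, since the lemma is asserted for every $\theta\le r^2$.
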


\begin{proof}
By Lemma \ref{lem_evol-struct}(i), $\kappa^2$ satisfies the interior inequality
	\begin{align*}
	(\partial_t-\partial_s^2)\kappa^2\le 2\kappa^4=2(\kappa^2)^2.
	\end{align*}
By Lemma \ref{lem_endpoint-control}, $\partial_{\mu}(\kappa^2)\le 2K\kappa^2$ on $\partial\gamma_t$.
Thus \eqref{eqn_Stahl-struct} holds for $Q=\kappa^2$ with $a_1=2$, $a_2=a_3=0$, $b_1=2K$, $b_2=0$.
Applying Theorem \ref{thm_Stahl-localMP} gives \eqref{eqn_local-kappa2}.
\end{proof}

\medskip

We now explain the derivative bounds. Altschuler constructs, for each $m\ge 1$, a non-negative
quantity $Q_m$ depending on $\partial_s^j T$, for $1\le j\le m$, which is scale-invariant and satisfies
an interior differential inequality of the form \eqref{eqn_Stahl-struct} on a time interval of length proportional up to a constant to
$ M_{t_0}^{-1}$; see \cite[Part II, \S 3]{Altschuler91}. The free-boundary adaptation consists
only in verifying the boundary inequality in \eqref{eqn_Stahl-struct}, after which Stahl's principle
yields the same local bounds up to the boundary.

\begin{proof}[Proof of Equation \eqref{eqn_FBCSF-derivative}]
Fix $m\ge 1$ and let $Q_m$ be Altschuler's scale-invariant quantity controlling $|\partial_s^m T|$
as in \cite[Part II, \S 3]{Altschuler91}. On the interior of the curve, Altschuler proves an estimate
of the form
	\begin{align}\label{eqn_Altschuler-interior}
	(\partial_t-\partial_s^2)Q_m\le a_1 Q_m^2+a_2 Q_m+a_3
	\end{align}
on a time interval $[t_0,t_0+c/M_{t_0}]$, with constants $a_i$ depending only on $m$.

It remains to control the endpoint conormal derivative. Since $\partial_{\mu}=\pm\partial_s$ at an
endpoint, $\partial_{\mu}Q_m$ can be expanded in terms of $\partial_s^{m+1}T$ and lower derivatives.
Using the endpoint relations in Lemma \ref{lem_endpoint-control} (in particular
$|\partial_{\mu}\kappa|\le K|\kappa|$ and $|\tau_1|\le K$ at $\partial\gamma_t$), one obtains a boundary
inequality of the form
	\begin{align}\label{eqn_Altschuler-boundary}
	\partial_{\mu}Q_m\le b_1 Q_m+b_2\qquad\text{ on }\partial\gamma_t,
	\end{align}
where $b_1,b_2$ depend only on $m$ and $K$. Indeed, from Lemma \ref{lem_endpoint-control}, we have the following. Since,
	\begin{align*}
\partial_s^2 T=-\kappa^2 T+\e\, D_{\partial_t\gamma}\nu_{\partial\Omega} \ \ \text{ and} \ \ \partial_s^2 T=-\kappa^2 T+\partial_s \kappa N+\kappa\tau_1 B,
	\end{align*}
we have
	\begin{align}\label{firstT}
	\partial_\mu\partial_t T&= -\e (\partial_\mu \kappa) \mathrm{II}_{\partial\Omega} (N,N)N-\e(\partial_\mu\kappa)\mathrm{II}_{\partial\Omega} (N,B)B-\e\kappa\nabla_{\partial_t \gamma} \mathrm{II}_{\partial\Omega} (N,N)N\nonumber\\
	&-\e\kappa\mathrm{II}_{\partial\Omega} (N,N)\partial_\mu N -\e\kappa \nabla_{\partial_t \gamma} \mathrm{II}_{\partial\Omega} (N,B)B-\e\kappa \mathrm{II}_{\partial\Omega} (N,B)\partial_\mu B
	\end{align}
and
	\begin{align}\label{secondT}
\partial_\mu\partial_t T=\partial_\mu\partial_s \kappa N+ \partial_s \kappa\partial_\mu N +(\partial_\mu \kappa) \tau_1 B+\kappa (\partial_\mu \tau_1) B + \kappa \tau_1 \partial_\mu B.
	\end{align}
Also, since $\partial_s N=-\kappa T+\tau_1 B$, $\partial_s B=-\tau_1 N$ and $\partial_\mu=\pm \partial_s$, \eqref{firstT} becomes
	\begin{align}\label{firstTs}
	\partial_\mu\partial_t T=\pm\partial_s\partial_t T&= \mp\e (\partial_s \kappa) \mathrm{II}_{\partial\Omega} (N,N)N\mp\e(\partial_s\kappa)\mathrm{II}_{\partial\Omega} (N,B)B-\e\kappa\nabla_{\partial_t \gamma} \mathrm{II}_{\partial\Omega} (N,N)N\nonumber\\
	&\mp\e\kappa\mathrm{II}_{\partial\Omega} (N,N)\partial_s N -\e\kappa \nabla_{\partial_t \gamma} \mathrm{II}_{\partial\Omega} (N,B)B\mp\e\kappa \mathrm{II}_{\partial\Omega} (N,B)\partial_s B\nonumber\\
	&= \mp\e (\partial_s \kappa) \mathrm{II}_{\partial\Omega} (N,N)N\mp\e(\partial_s\kappa)\mathrm{II}_{\partial\Omega} (N,B)B-\e\kappa\nabla_{\partial_t \gamma} \mathrm{II}_{\partial\Omega} (N,N)N\nonumber\\
	&\pm\e\kappa^2\mathrm{II}_{\partial\Omega} (N,N)T \mp\e\kappa\tau_1\mathrm{II}_{\partial\Omega} (N,N) B -\e\kappa \nabla_{\partial_t \gamma} \mathrm{II}_{\partial\Omega} (N,B)B\nonumber\\
	&\pm\e\kappa\tau_1 \mathrm{II}_{\partial\Omega} (N,B) N.
	\end{align}
Moreover, \eqref{secondT} becomes
	\begin{align}\label{secondTs}
	\partial_\mu\partial_t T=\pm\partial_s\partial_t T&=\pm(\partial_s^2 \kappa) N\pm \partial_s \kappa\partial_s N \pm(\partial_s \kappa) \tau_1 B\pm\kappa (\partial_s \tau_1) B \pm \kappa \tau_1 \partial_s B\nonumber\\
	&=\pm(\partial_s^2 \kappa) N\mp (\partial_s \kappa)\kappa T\pm 2(\partial_s \kappa)\tau_1 B \pm\kappa (\partial_s \tau_1) B \mp \kappa \tau_1^2 N.
	\end{align}
Comparing the coefficients of $B$ in \eqref{firstTs} and \eqref{secondTs}, for $\kappa\neq0$, we obtain that
	\begin{align*}
	\pm \Big(2(\partial_s \kappa)\tau_1&+\kappa (\partial_s \tau_1)\Big)=\mp\e\kappa\tau_1\mathrm{II}_{\partial\Omega} (N,N)-\e\kappa \nabla_{\partial_t \gamma} \mathrm{II}_{\partial\Omega} (N,B)\mp\e(\partial_s\kappa)\mathrm{II}_{\partial\Omega} (N,B)\Longleftrightarrow\\
	 \pm\partial_s \tau_1&=-2\frac{(\partial_s\kappa)}{\kappa}\tau_1\mp\e\tau_1\mathrm{II}_{\partial\Omega} (N,N)-\e\nabla_{\partial_t \gamma} \mathrm{II}_{\partial\Omega} (N,B)\mp\e\frac{(\partial_s\kappa)}{\kappa}\mathrm{II}_{\partial\Omega} (N,B)
	\end{align*}
and from \eqref{sffK}, we have
	\begin{align}\label{boundforderivativeoftau1}
|\partial_s \tau_1|&\le\left|2\frac{(\partial_s\kappa)}{\kappa}\tau_1\right|+|\e\tau_1\mathrm{II}_{\partial\Omega} (N,N)|+|\e\nabla_{\partial_t \gamma} \mathrm{II}_{\partial\Omega} (N,B)|+\left|\e\frac{(\partial_s\kappa)}{\kappa}\mathrm{II}_{\partial\Omega} (N,B)\right|\le C_K.
	\end{align}
Comparing the coefficients of N in \eqref{firstTs} and \eqref{secondTs}, we obtain that
	\begin{align*}
\pm\Big((\partial_s^2 \kappa)+ \kappa \tau_1^2\Big)&=\mp\e (\partial_s \kappa) \mathrm{II}_{\partial\Omega} (N,N)-\e\kappa\nabla_{\partial_t \gamma} \mathrm{II}_{\partial\Omega} (N,N)\pm\e\kappa\tau_1 \mathrm{II}_{\partial\Omega} (N,B)\Longleftrightarrow\\
	&\pm\partial_s^2 \kappa=\mp\kappa\tau_1^2\mp\e (\partial_s \kappa) \mathrm{II}_{\partial\Omega} (N,N)-\e\kappa\nabla_{\partial_t \gamma} \mathrm{II}_{\partial\Omega} (N,N)\pm\e\kappa\tau_1 \mathrm{II}_{\partial\Omega} (N,B)
	\end{align*}
and from \eqref{sffK}, we have
	\begin{align}\label{boundforderivativeofkss}
|\partial_s^2 \kappa|=|\kappa\tau_1^2|+|\e (\partial_s \kappa) \mathrm{II}_{\partial\Omega} (N,N)|+|\e\kappa\nabla_{\partial_t \gamma} \mathrm{II}_{\partial\Omega} (N,N)|+|\e\kappa\tau_1 \mathrm{II}_{\partial\Omega} (N,B)|\le C'_K|\partial_s \kappa|+C''_K |\kappa|.
	\end{align}
Therefore,
	\begin{align}\label{extrater}
	\left|2 \partial_s\kappa \partial_s^2 \kappa\right|&\leq 2 C'_K |\partial_s\kappa|^2+2 C''_K\left|\partial_s\kappa\right||\kappa| \leq 2 C'_K |\partial_s\kappa|^2+C''_K\left(|\partial_s\kappa|^2+\kappa^2\right) \nonumber\\
	&\le 2 C'_K |\partial_s\kappa|^2+C''_K\left(|\partial_s\kappa|^2+1+\kappa^4\right)\nonumber \\
	&\leq C_K\left|\partial_s^2T\right|^2+C_K.
	\end{align}
Using \eqref{boundforderivativeoftau1} and \eqref{extrater}, we have the following calculation.
	\begin{align*}
	|\partial_\mu |Q_2|^2|=|\partial_\mu |\partial_s^2 T|^2|&\le|\partial_\mu (\partial_s \kappa)^2+\partial_\mu (\kappa^4)+\partial_\mu(\kappa^2\tau_1^2)|\\
	&\le|2\partial_s\kappa\partial_s^2\kappa|+|4\kappa^3 \partial_s \kappa|+|2\kappa^2\tau_1^2\partial_s \kappa|+|2\tau_1\kappa^2\partial_s\tau_1|\\
	&\le b_1 |Q_2|^2+b_2,
	\end{align*}
where $b_1$ and $b_2$ depend only on $K$. Applying Theorem \ref{thm_Stahl-localMP} to $Q=Q_m$ with \eqref{eqn_Altschuler-interior} and
\eqref{eqn_Altschuler-boundary} yields a local bound for $Q_m$ on a shrunken cylinder that reaches
$\partial\gamma_t$. Since $Q_m$ controls $|\partial_s^m T|$ with the correct scaling (as in
\cite[Part II, \S 3]{Altschuler91}), we obtain \eqref{eqn_FBCSF-derivative}.
\end{proof}
\section{Monotonicity and Entropy}

In this section we define the entropy of free boundary curve shortening flow. We follow the definitions in Edelen \cite{Edelen2016}.

\begin{definition}\label{def471}
Let $r$ be the cut-off radius, with $r \le r_{\partial\Omega}/c_0(n)$, and set
	\begin{align*}
	\eta(\xi)=(1-\xi)_+^4.
	\end{align*}
Define the cut-off function, at radius $r$, by
	\begin{align*}
	\phi_{\partial\Omega,r}(x,t)=\eta\left(\left(\frac{r^2}{\hat\sigma}\right)^{3/4}\frac{|x|^2-\alpha \hat\sigma}{r^2}\right)
=\left(1-r^{-2}\left(\frac{r^2}{\hat\sigma}\right)^{3/4}\left(|x|^2-\alpha \hat\sigma\right)\right)_+^4,
	\end{align*}
where $\hat\sigma=-t$ and $\alpha=\alpha(n)\ge 1/2$ is to be determined later. Similarly, define the reflected cut-off function by
	\begin{align*}
	\tilde{\phi}_{\partial\Omega,r}(x,t)=\phi_{\partial\Omega,r}(\tilde{x},t), \quad \tilde{x}=2\zeta(x)-x,
	\end{align*}
where $\tilde{x}$ is the reflection of $x$ across $\partial\Omega$, and $\zeta(x)$ is the nearest-point orthogonal projection of $x$ onto $\partial\Omega$.
\end{definition}

\begin{definition}\label{def:reflected_truncated_kernel}
Define the Gaussian heat kernel
	\begin{align*}
	\rho_{\partial\Omega}(x,t)=(4\pi\hat{\sigma})^{-n/2}\exp\left(-\frac{|x|^2}{4\hat{\sigma}}\right),
	\end{align*}
where $\hat{\sigma}=-t>0$, and define the reflected heat kernel by
	\begin{align*}
	\tilde{\rho}_{\partial\Omega}(x,t)=\rho_{\partial\Omega}(\tilde{x},t),
	\end{align*}
with $\tilde{x}=2\zeta(x)-x$ as in Definition \ref{def471}. Define the reflected, truncated heat kernel by
	\begin{align*}
	f_{\partial\Omega,r}(x,t)=\rho_{\partial\Omega}(x,t)\phi_{\partial\Omega,r}(x,t)+\tilde{\rho}_{\partial\Omega}(x,t)\tilde{\phi}_{\partial\Omega,r}(x,t),
	\end{align*}
where $\phi_{\partial\Omega,r},\tilde{\phi}_{\partial\Omega,r}$ are as in Definition \ref{def471}.
\end{definition}

The above definition gives the appropriate Gaussian density centred at the spacetime origin. Given
$X_0=(x_0,t_0)$, define the recentered reflected, truncated heat kernel by
	\begin{align*}
	f_{\partial\Omega,r,X_0}(x,t)
&=\rho_{\partial\Omega}(x-x_0,t-t_0)\,\phi_{\partial\Omega,r}(x-x_0,t-t_0)\\
	&+\rho_{\partial\Omega}(\widetilde{x}-x_0,t-t_0)\,\phi_{\partial\Omega,r}(\widetilde{x}-x_0,t-t_0),
	\end{align*}
where $\widetilde{x}=2\zeta(x)-x$ is the reflection of $x$ across $\partial\Omega$ (with $\zeta(x)$ the nearest-point projection onto $\partial\Omega$).
In particular, $f_{\partial\Omega,r,(0,0)}\equiv f_{\partial\Omega,r}$.

\begin{theorem}[Free boundary monotonicity formula \cite{Edelen2016}]\label{thm_fb_monotonicity}
Let $\{\mu(t)\}_{t\geq -1}$ be a free-boundary Brakke flow supported in $\Omega\subset U$, with $\partial\Omega$.
Assume
	\begin{align*}
	0\in B_{r/10}(\partial\Omega)\cap\overline{\Omega}
\quad\text{ and}\quad
d(0,\partial U)\geq r.
	\end{align*}
Then there exist constants $\sigma_0=\sigma_0(r,n)>0$ and $A=A(r,n)>0$ such that, if $r\leq r_{\partial\Omega}/c_1$, the function
	\begin{align*}
	t\mapsto e^{A(-t)^{1/4}}\int f_{\partial\Omega,r}(\cdot,t)\,d\mu(t)+A\,M(-t)
	\end{align*}
is nonincreasing for $t\in[-\sigma_0,0]$. Here $M$ is any constant satisfying
	\begin{align*}
	M\geq \mu(-\sigma_0)\bigl(\phi_{\partial\Omega,r}(\cdot,-\sigma_0)+\tilde{\phi}_{\partial\Omega,r}(\cdot,-\sigma_0)\bigr).
	\end{align*}
\end{theorem}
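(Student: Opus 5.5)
The plan is to follow Edelen's proof of the free-boundary monotonicity formula. The core is the classical Huisken computation, run separately for the two halves of the kernel. The reflection and the cut-offs produce error terms, and these are absorbed by the weight $e^{A(-t)^{1/4}}$ and the linear term $AM(-t)$.

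\textbf{Step 1: Brakke inequality for the two halves.} For a free-boundary Brakke flow, the variational inequality holds with test functions $f$ satisfying $\partial_{\nu_{\partial\Omega}}f\le 0$ on $\partial\Omega$. In that case the boundary term of the first variation has a favourable sign, because the tangential part of the flow meets the barrier orthogonally. Inserting $f=f_{\partial\Omega,r}$ gives
\begin{align*}
\frac{d}{dt}\int f\,d\mu\le\int\Bigl(-|\vec H-\tfrac{\nabla^\perp f}{f}|^2f+\bigl(\partial_t f+\operatorname{div}_\mu\nabla f+\tfrac{|\nabla^\perp f|^2}{f}\bigr)\Bigr)d\mu .
\end{align*}
For the unreflected term $\rho\phi$, the exact heat-kernel identity $\partial_t\rho+\operatorname{div}_\mu\nabla\rho+|\nabla^\perp\rho|^2/\rho=0$ leaves only the cut-off terms. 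The choice $\eta(\xi)=(1-\xi)_+^4$ with the parabolic rescaling $(r^2/\hat\sigma)^{3/4}$ is designed so that, on the support of $\phi$, $\nabla\phi$ and $\partial_t\phi$ are paired with a Gaussian factor $e^{-\alpha/4\cdot(\cdot)}$. With $\alpha\ge 1/2$ these error terms are bounded by $C\,\mu(\phi+\tilde\phi)$, and hence by $CM$.

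\textbf{Step 2: The reflected half and the Neumann sign.} I would work in Fermi coordinates near $\partial\Omega$, valid within $r\le r_{\partial\Omega}/c_1$. There the reflection $x\mapsto\tilde x=2\zeta(x)-x$ is a $C^{1,1}$ map whose differential is the Euclidean reflection up to $O(d(x,\partial\Omega)/r_{\partial\Omega})$, with second derivatives $O(1/r_{\partial\Omega})$. It follows that $\tilde\rho\tilde\phi$ satisfies the same identity as in Step 1 up to errors bounded by $C(|x-\zeta(x)|+|x|)\hat\sigma^{-1}\tilde\rho+C\tilde\rho$. On the Gaussian scale $|x|\lesssim\sqrt{\hat\sigma}$, these errors are $O(\hat\sigma^{-1/2})\tilde\rho$. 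The decisive point is the boundary: on $\partial\Omega$ we have $\tilde x=x$, and the normal derivatives of $\rho\phi$ and $\tilde\rho\tilde\phi$ cancel to leading order. What remains is $O(|x|^2/(r_{\partial\Omega}\hat\sigma))f$, so the Neumann condition can be restored after adding a controlled error. This relies on the hypothesis $0\in B_{r/10}(\partial\Omega)$.

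\textbf{Step 3: Integrating the errors.} Together, Steps 1 and 2 give
\begin{align*}
\frac{d}{dt}\int f\,d\mu\le C\hat\sigma^{-3/4}\int f\,d\mu+CM .
\end{align*}
Multiplying by $e^{A\hat\sigma^{1/4}}$ with $A$ large turns the first error into a nonpositive term, since $\frac{d}{dt}e^{A\hat\sigma^{1/4}}=-\tfrac A4\hat\sigma^{-3/4}e^{A\hat\sigma^{1/4}}$. The term $AM(-t)$ absorbs the constant error. It remains to bound the mass $\mu(t)(\phi+\tilde\phi)$ by $M$ for $t\in[-\sigma_0,0]$. I would get this by testing the Brakke inequality with $\phi+\tilde\phi$ alone, which is monotone up to a constant because its support shrinks. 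Choosing $\sigma_0(r,n)$ small then keeps every constant uniform.

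\textbf{Main obstacle.} The hardest step is the boundary sign in Step 2. Exact reflection symmetry holds only when $\partial\Omega$ is flat, so one has to show quantitatively that the curvature of $\partial\Omega$ spoils $\partial_\nu f\le 0$ only by an amount absorbable at the rate $\hat\sigma^{-3/4}$. My first attempt would be to expand $\zeta$ to second order in Fermi coordinates and to use the cut-off to confine everything to the region $|x|\le C\hat\sigma^{1/8}r^{3/4}$, where the error is integrable in time.
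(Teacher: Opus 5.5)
\textbf{Verdict: the overall architecture is right, but Step 2 and your ``main obstacle'' rest on a false premise, and your handling of the interior errors and of the mass bound is incomplete.}

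The paper gives no proof of this statement; it is quoted from Edelen, so the comparison is with Edelen's argument. Your architecture matches it: the free-boundary Brakke inequality tested with the reflected truncated kernel, the cut-off errors, the factor $e^{A(-t)^{1/4}}$, and $AM(-t)$ paying for errors bounded by the localized mass.

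\emph{The boundary term vanishes exactly.} For the nearest-point reflection, at every $x\in\partial\Omega$ one has $\tilde x=x$ and $D\tilde x(x)=2D\zeta(x)-I=I-2\,\nu\otimes\nu$ exactly. This is because $D\zeta$ at a boundary point is the orthogonal projection onto $T_x\partial\Omega$; it is just the case $d=0$ of your own remark that $D\tilde x$ is a reflection up to $O(d/r_{\partial\Omega})$. Hence $\partial_\nu(g+g\circ\tilde x)=\partial_\nu g-\partial_\nu g=0$ on $\partial\Omega$ for every $C^1$ function $g$. In particular $\partial_\nu f_{\partial\Omega,r}\equiv 0$ and $\partial_\nu(\phi+\tilde\phi)\equiv 0$ on $\partial\Omega$, whatever the curvature of $\partial\Omega$. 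There is no $O(|x|^2/(r_{\partial\Omega}\hat\sigma))f$ remainder.

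This exactness is the point of the construction. Since $\nabla f$ is tangent to $\partial\Omega$ along $\partial\Omega$, the free-boundary first variation applies to it with no boundary contribution, which also makes the sign convention in your Step 1 immaterial. If you integrate the cut-off cross terms by parts separately for the two halves, the boundary terms $\rho\,\partial_\nu\phi+\tilde\rho\,\partial_\nu\tilde\phi$ cancel by the same symmetry.

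Your proposed remedy would not have worked even if such a remainder existed. It would be integrated against the boundary measure of the flow (for curves, the endpoints), not against $\mu$, so it cannot be absorbed into $C\hat\sigma^{-3/4}\int f\,d\mu$. Moreover, a function violating the Neumann condition is not an admissible test function in the free-boundary Brakke inequality.

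\emph{Where the curvature of $\partial\Omega$ actually enters.} It costs something only in the interior, which you treat heuristically (``on the Gaussian scale''). Because $D\tilde x$ fails to be orthogonal off $\partial\Omega$ and $D^2\tilde x\neq 0$, $\tilde\rho$ violates Huisken's identity by terms of order $\tilde\rho\, r_{\partial\Omega}^{-1}\bigl(|\tilde x|/\hat\sigma+d/\hat\sigma+d\,|\tilde x|^2/\hat\sigma^2\bigr)$, with $d=d(x,\partial\Omega)$. Your bound omits the last term, which is the one sensitive to the tails. To make Step 3 rigorous you need two ingredients:
\begin{enumerate}
\item $d(x)=\operatorname{dist}(\tilde x,\overline\Omega)\le|\tilde x|$ for $x\in\overline\Omega$. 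This, together with keeping $\operatorname{spt}\tilde\phi$ inside the tubular neighbourhood, is what $0\in B_{r/10}(\partial\Omega)\cap\overline\Omega$ is really for.
\item The shrinking support $|\tilde x|^2\le\alpha\hat\sigma+r^{1/2}\hat\sigma^{3/4}$ of the cut-off. Use it to split into a region where the polynomial weights are $O(\hat\sigma^{-3/4})$, and a region where the Gaussian is super-polynomially small, which contributes at most $C\mu_t(\phi+\tilde\phi)$.
\end{enumerate}

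\emph{The mass bound.} The bound $\mu_t(\phi+\tilde\phi)\le M$ on $[-\sigma_0,0]$ does not follow from the support shrinking; it follows because $\phi$ and $\tilde\phi$ are subsolutions.
\begin{enumerate}
\item For $\phi$: with $\psi=r^{-1/2}\hat\sigma^{-3/4}(|x|^2-\alpha\hat\sigma)$ and $k$ the dimension of the flow, $(\frac{d}{dt}-\Delta_M)\psi=\frac{3\psi}{4\hat\sigma}+(\alpha-2k)r^{-1/2}\hat\sigma^{-3/4}$. This is nonnegative on $\{\psi<1\}$ once $\alpha$ is large; use $\psi\ge-\alpha r^{-1/2}\hat\sigma^{1/4}$. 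Since $\eta$ is convex and nonincreasing, $(\frac{d}{dt}-\Delta_M)\phi\le 0$.
\item For $\tilde\phi$: the $\eta''$ term is exact, and the reflection error multiplies only $\eta'$. Its relative size is $O(|\tilde x|/r_{\partial\Omega})$ against the good term $(\alpha-2k)|\eta'|r^{-1/2}\hat\sigma^{-3/4}$, so it is absorbed.
\end{enumerate}
Together with $\partial_\nu(\phi+\tilde\phi)=0$, this makes $t\mapsto\mu_t(\phi+\tilde\phi)$ nonincreasing.
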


We will use Theorem \ref{thm_fb_monotonicity} throughout the next section.

\section{Type I Blowups and Self Shrinkers}\label{section6}

\subsection{Type I Singularities}
Let $\{\gamma_t\}_{t\in[0,t_0)}$ be a smooth free-boundary curve shortening flow in a smooth domain
$\Omega\subset\mathbb{R}^{n+1}$ with boundary $\partial\Omega$, and suppose that a boundary singularity occurs at
$X_0=(x_0,t_0)$ with $x_0\in \partial\Omega$. Assume the singularity is of Type I, i.e.
	\begin{align*}
	|\kappa|(p,t)\le \frac{C}{\sqrt{t_0-t}},
\quad\text{ for all }(p,t)\in\gamma_t\times(t_0-1,t_0).
	\end{align*}

\begin{theorem}[Type I blow-ups are self-shrinkers]\label{thm_typeI_blowups_shrinkers}
Let $\{\gamma_t\}_{t\in[t_0-1,t_0)}$ be a smooth free-boundary curve shortening flow in a smooth domain
$\Omega\subset\mathbb{R}^{n+1}$ with boundary $\partial\Omega$ and suppose $(x_0,t_0)$ is a boundary singular point with $x_0\in \partial\Omega$. Assume the Type I curvature bound
	\begin{align*}
	|\kappa|(p,t)\le \frac{C}{\sqrt{t_0-t}},
\quad\text{ for all }t\in(t_0-1,t_0)
	\end{align*}
and all $p\in\gamma_t$.

Let $\lambda_i\to\infty$ and define the parabolically rescaled flows
	\begin{align*}
	\gamma^{(i)}_t:=\lambda_i\bigl(\gamma_{t_0+t/\lambda_i^2}-x_0\bigr),\qquad t\in(-\infty,0).
	\end{align*}
Let $\Omega_i:=\lambda_i(\Omega-x_0)$ and $\partial\Omega_i=\lambda_i(\partial\Omega-x_0)$. Then $\partial\Omega_i$ converges in
$C^2_{\mathrm{loc}}$ to the tangent hyperplane $L:=T_{x_0}\partial\Omega$ and $\Omega_i$ converges in $C^2_{\mathrm{loc}}$ to the
corresponding half-space $H$ bounded by $L$.

Moreover, after passing to a subsequence, $\gamma^{(i)}_t$ converges smoothly on compact subsets of
$H\times(-\infty,0)$ to an ancient smooth free-boundary curve shortening flow $\{\gamma^\infty_t\}_{t<0}$ in the flat
half-space $H$, with free-boundary on $L$. Any such blow-up limit is self-similar, and each time-slice satisfies the
self-shrinker equation in $H$,
	\begin{align*}
	\vec{H}+\frac{x^\perp}{2\hat\sigma}=0,\qquad \hat\sigma=-t,
	\end{align*}
together with the free-boundary condition on $L$.
\end{theorem}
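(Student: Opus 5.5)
The proof proceeds in three stages: convergence of the rescaled barriers, compactness of the rescaled flows, and self-similarity of the limit via Theorem \ref{thm_fb_monotonicity}.

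\emph{Convergence of the barriers.} Since $x_0\in\partial\Omega$ and $\partial\Omega$ is smooth, near $x_0$ we write $\partial\Omega$ as a graph $u$ over $L=T_{x_0}\partial\Omega$ with $u(0)=0$ and $Du(0)=0$. The rescaled surface $\partial\Omega_i$ is the graph of
\begin{align*}
u_i(y)=\lambda_i u(y/\lambda_i).
\end{align*}
Its derivatives satisfy $|D^2u_i|\le K/\lambda_i$ and $|Du_i(y)|\le K|y|/\lambda_i$. Hence $u_i\to0$ in $C^2_{\mathrm{loc}}$, so $\partial\Omega_i\to L$ and $\Omega_i\to H$. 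The second fundamental form of $\partial\Omega_i$ is bounded by $K/\lambda_i$, and its tubular radius is $\lambda_i\rho$.

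\emph{Compactness of the rescaled flows.} The Type I bound is scale invariant. For $\gamma^{(i)}_t$ it gives $|\kappa^{(i)}|\le C/\sqrt{-t}$ on $(-\lambda_i^2,0)$, which is uniform on each compact subset of time in $(-\infty,0)$. We then apply Lemma \ref{lem_local-kappa2} together with the derivative estimates \eqref{eqn_FBCSF-derivative} to the rescaled flows. All constants are uniform in $i$, because $K_i=K/\lambda_i\le K$ and $\rho_i\ge\rho$, and the boundary reductions of Lemmas \ref{lem_endpoint-control} and \ref{lem_boundary-reduction} carry only better constants. This yields uniform bounds on $|\partial_s^mT|$ up to the boundary on compact time intervals. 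Time derivatives are controlled through the equation. Local length bounds come from the monotonicity formula: the Gaussian density ratios are uniformly bounded by the value at the initial time, and this bound is scale invariant. Arzel\`a--Ascoli, applied in local graph charts over lines (half-lines near $L$), then gives smooth subsequential convergence on compact subsets of $H\times(-\infty,0)$ to a limit flow $\gamma^\infty_t$. The limit is a smooth curve shortening flow with the orthogonality condition on $L$, since both the contact condition and the endpoint constraint pass to the limit under $C^1$ convergence. It is nontrivial because $(x_0,t_0)$ is singular: a smooth limit with vanishing curvature near the origin would contradict singularity through the local regularity (pseudolocality-type) consequence of Lemma \ref{lem_local-kappa2}.

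\emph{Self-similarity.} We use Theorem \ref{thm_fb_monotonicity} centred at $X_0$. Write
\begin{align*}
\Theta(t)=e^{A(t_0-t)^{1/4}}\int f_{\partial\Omega,r,X_0}(\cdot,t)\,d\mu_t+AM(t_0-t).
\end{align*}
This quantity is monotone, so it has a limit $\Theta_0$ as $t\to t_0$. Under rescaling, the monotone quantity for $\gamma^{(i)}$ at time $s$ equals $\Theta(t_0+s/\lambda_i^2)$ up to error terms. The error terms vanish: the factors $e^{A(\cdot)^{1/4}}$ and $A M(\cdot)$ tend to $1$ and $0$, the cut-offs tend to $1$ locally, and the reflection across $\partial\Omega_i$ tends to the linear reflection across $L$. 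It follows that the limit flow has constant reflected Gaussian density,
\begin{align*}
\int\bigl(\rho(x,s)+\rho(\tilde x,s)\bigr)\,d\mu^\infty_s\equiv\Theta_0.
\end{align*}
To conclude, we integrate Edelen's monotonicity identity between two times and pass to the limit. This shows that the dissipation
\begin{align*}
\int_{s_1}^{s_2}\int\Bigl|\vec\kappa+\frac{x^\perp}{2(-s)}\Bigr|^2\bigl(\rho+\tilde\rho\bigr)
\end{align*}
for the limit, which dominates the corresponding integral for the rescaled flows up to vanishing errors, equals zero. Since the limit is smooth, this forces $\vec{H}+x^\perp/(2\hat\sigma)=0$ pointwise. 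Alternatively, we reflect across $L$, which is now exactly flat, and apply Huisken's interior identity. For $L$ flat, the reflected kernel satisfies the Neumann condition, so the boundary term in Huisken's identity vanishes by orthogonality.

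\emph{Main obstacle.} The main obstacle is the error bookkeeping in this rescaled monotonicity: controlling the cut-off and reflection errors uniformly, and ensuring there is no mass loss near the boundary or at infinity. Uniform Gaussian area bounds should handle the latter.
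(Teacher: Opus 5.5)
Your proposal is correct and follows essentially the paper's route, namely the proof of Lemma \ref{lemma1} together with Lemmas \ref{lem_uniform-reflected-gaussian-bound} and \ref{lem_limit-passage-fixed-cutoff}: scale-uniform boundary estimates plus Arzel\`a--Ascoli for compactness, constancy of the reflected Gaussian density of the limit via the monotone limit of Edelen's quantity and exact scale invariance, and then the reflected Huisken identity for the flat-boundary limit, which forces the dissipation to vanish. One slip: in your first variant of the last step the domination runs the wrong way (by Fatou, the limit's dissipation is bounded by the $\liminf$ of the rescaled dissipations, which tend to zero), though your reflect-and-Huisken variant is exactly the paper's argument; also, the nontriviality aside is not needed for this statement, since the paper excludes flat tangent flows separately in Lemma \ref{lemma3}.
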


\begin{proposition}[Compactness of blow-ups {\cite{Edelen2016}}]\label{prop:edelen_blowup_compactness}
Let $X_0=(x_0,t_0)$ with $x_0\in\overline{\Omega}$ and $t_0>-1$, and let $\lambda_i\to 0$.
Then, after passing to a subsequence, there exists an ancient Brakke flow $\mathcal{M}'$ such that
	\begin{align*}
	\mathcal{D}_{1/\lambda_i}(\mathcal{M}-X_0)\to \mathcal{M}'
	\end{align*}
as Brakke flows. Moreover, $\mathcal{M}'$ is either
\begin{itemize}
\item a Brakke flow in $\mathbb{R}^N$ if $x_0\notin \partial\Omega$, or
\item a free-boundary Brakke flow in a half-space in $\mathbb{R}^N$ if $x_0\in \partial\Omega$.
\end{itemize}
If $\mathcal{M}'$ has free boundary, then reflecting across the limiting boundary hyperplane yields a Brakke flow
$\widetilde{\mathcal{M}}'$ in $\mathbb{R}^N$ without boundary. Otherwise set $\widetilde{\mathcal{M}}'=\mathcal{M}'$.
\end{proposition}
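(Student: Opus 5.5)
The statement to prove is Proposition~\ref{prop:edelen_blowup_compactness}, which is Edelen's compactness theorem. I would follow the standard Brakke compactness route: first obtain uniform local mass bounds, then apply Brakke's compactness theorem for integral Brakke flows, and finally identify the boundary behaviour of the limit.

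\textbf{Step 1: uniform mass bounds.} Write $\mathcal{M}_i:=\mathcal{D}_{1/\lambda_i}(\mathcal{M}-X_0)$. These flows are free-boundary Brakke flows in the rescaled domains $\Omega_i=\lambda_i^{-1}(\Omega-x_0)$. The domains satisfy $\|\mathrm{II}_{\partial\Omega_i}\|\le \lambda_i K\to 0$, and their admissible cut-off radius is $r_{\partial\Omega_i}=r_{\partial\Omega}/\lambda_i\to\infty$.

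I would apply Theorem~\ref{thm_fb_monotonicity} to $\mathcal{M}$, recentred at an arbitrary point $(y,s)$ near $X_0$, using $f_{\partial\Omega,r,(y,s)}$. Together with the initial mass bound $M$ of the original flow, this gives Gaussian-density bounds at all scales below $\sqrt{\sigma_0}$. Bounding the Gaussian density from below on a ball yields the Euclidean area-ratio estimate
\begin{align*}
\mu_t(B_R(y))\le C R \qquad \text{ for } R\le c\sqrt{\sigma_0}.
\end{align*}
This estimate is scale-invariant. After rescaling it becomes $\mu^{(i)}_t(B_R(y))\le CR$ for all $R\le c\sqrt{\sigma_0}/\lambda_i\to\infty$, so the $\mathcal{M}_i$ have uniform local mass bounds on every compact subset of spacetime.

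\textbf{Step 2: passing to the limit.} To remove the boundary term, I would reflect each $\mathcal{M}_i$ across $\partial\Omega_i$ using the map $x\mapsto 2\zeta(x)-x$ from Definition~\ref{def471}. The free-boundary Brakke inequality for $\mathcal{M}_i$ then becomes a Brakke inequality for the doubled measures, with error terms of size $O(\lambda_i K)$ coming from the curvature of $\partial\Omega_i$; this is precisely the computation behind Edelen's reflected monotonicity.

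With these almost-Brakke inequalities and the uniform mass bounds, Ilmanen's compactness argument for Brakke flows gives a subsequence converging as Brakke flows. Concretely, one first gets convergence of $\mu^{(i)}_t(\phi)$ for a countable dense family of test functions $\phi$ and rational times, using the monotonicity of $t\mapsto\mu_t(\phi)-C_\phi t$, and then extends to all $t$ and all $\phi$. The error terms vanish in the limit, so the limit $\mathcal{M}'$ satisfies Brakke's inequality without error and is defined for all negative times.

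\textbf{Step 3: identifying the limit.} The argument splits according to the location of $x_0$.

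If $x_0\notin\partial\Omega$, then $d(x_0,\partial\Omega)/\lambda_i\to\infty$. Hence on any fixed compact set the boundary is eventually absent, and $\mathcal{M}'$ is an ordinary Brakke flow in $\mathbb{R}^N$.

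If $x_0\in\partial\Omega$, then $\partial\Omega_i\to L=T_{x_0}\partial\Omega$ in $C^2_{\mathrm{loc}}$, as in Theorem~\ref{thm_typeI_blowups_shrinkers}, and $\Omega_i\to H$. The limit is supported in $\overline H$. The free-boundary variational condition, namely that the first variation is tested only against fields tangent to $\partial\Omega_i$, passes to the limit because the tangential projections onto $T\partial\Omega_i$ converge to the projection onto $L$.

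Since $L$ is flat, the reflection $x\mapsto 2\pi_L(x)-x$ is an isometry. The doubled measure $\widetilde{\mathcal{M}}'=\mathcal{M}'+\mathcal{R}_\#\mathcal{M}'$, where $\mathcal{R}$ denotes this reflection, is then a Brakke flow in $\mathbb{R}^N$. The free-boundary condition is exactly what cancels the boundary flux in the first variation, so no boundary term remains.

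\textbf{Main obstacle.} I expect the hardest part to be Step~2 in the boundary case: showing that the reflected flows satisfy Brakke's inequality up to errors that vanish uniformly. This requires controlling how the Jacobian of the reflection map and the term $\nabla\zeta$ distort the first variation near $\partial\Omega_i$, and it is exactly where Edelen's cut-off construction and the hypothesis $r\le r_{\partial\Omega}/c_1$ are used. I would take those error estimates from \cite{Edelen2016} and note that each of them scales like $\lambda_i$.
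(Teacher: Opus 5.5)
The paper gives no proof of its own here; it quotes \cite{Edelen2016}. Your outline is correct and has the same architecture as Edelen's argument: scale-invariant area bounds from monotonicity, Ilmanen-type compactness, identification of the limit domain as $\mathbb{R}^N$ or the half-space $H$, and doubling across the flat limiting hyperplane. The doubling works because the reflection $\mathcal{R}$ across $L$ is an isometry, so the doubled Brakke inequality follows immediately by testing $\mathcal{M}'$ with $\phi+\phi\circ\mathcal{R}$.

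The one real departure is Step 2, where you reflect the approximating flows across the curved barriers $\partial\Omega_i$ before passing to the limit. This lets you quote Ilmanen's and Allard's compactness theorems for flows and varifolds without boundary, instead of a free-boundary compactness theorem with Gr\"uter--Jost integral compactness. The cost is an almost-Brakke inequality for $\mathcal{R}_{i\#}\mu_i$ in terms of its own mean curvature. That mean curvature involves $D^2\mathcal{R}_i$, i.e.\ third derivatives of $\partial\Omega$, and the non-isometric Jacobian of $\mathcal{R}_i$. These are Gr\"uter--Jost-type reflection estimates. They are not what Edelen's monotonicity computation provides: that computation tests the unreflected flow against the symmetrised kernel $f_{\partial\Omega,r}$ and never computes the mean curvature of a pushforward.

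The detour is also unnecessary. At points of $\partial\Omega_i$, $D\mathcal{R}_i$ is the reflection across $T\partial\Omega_i$. Hence $\nabla(\phi+\phi\circ\mathcal{R}_i)$ is tangent to $\partial\Omega_i$, so these functions are admissible free-boundary test functions for $\mathcal{M}_i$. They converge in $C^1_{\mathrm{loc}}$ to $\phi+\phi\circ\mathcal{R}$ because $\partial\Omega_i\to L$ in $C^2_{\mathrm{loc}}$. You can therefore run the compactness argument directly on the free-boundary flows and reflect only in the limit, which is what your Step 3 already does.

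As written, your Steps 2 and 3 take two different limits but call both $\mathcal{M}'$. The limit of the doubled flows in Step 2 is $\widetilde{\mathcal{M}}'$, and $\mathcal{M}'$ is its restriction to $\overline H$. You should run one route or the other, not both.

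There is also a minor point in Step 1. Theorem \ref{thm_fb_monotonicity} as stated only covers centres within $r/10$ of $\partial\Omega$. For interior $x_0$, the mass bounds instead come from Huisken's localised monotonicity.
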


\begin{lemma}[cf. \cite{Edelen2016}](Reflected self-shrinker from Type I blow-up).\label{lemma1}
Let $\left\{\gamma_t\right\}$ be a free-boundary curve shortening flow in a smooth domain with boundary, satisfying a Type I bound near a boundary point $\left(x_0, t_0\right)$. Apply Edelen's reflected monotonicity formula with the recentered reflected heat kernel and perform a parabolic blow-up at $\left(x_0, t_0\right)$. Then any blow-up limit $\left\{\gamma_t\right\}_{t<0}$ is a free-boundary self-shrinker in a half-plane, i.e. it satisfies
	\begin{align*}
\kappa N+\frac{x^\perp}{2\hat\sigma}=0,\qquad \hat\sigma=-t,
	\end{align*}
on each time-slice $\gamma_t$, with free-boundary condition on the limiting boundary line.
Moreover, reflecting $\gamma_t$ across this boundary line produces a smooth interior self-shrinker $\Sigma\subset \mathbb{R}^2$.
\end{lemma}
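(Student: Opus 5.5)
The plan is to blow up at $(x_0,t_0)$ along $\lambda_i\to\infty$ using the parabolic rescalings $\gamma^{(i)}_t$ of Theorem \ref{thm_typeI_blowups_shrinkers}, show that the limit is self-similar via Edelen's reflected monotonicity, and then reflect across the limiting line. We proceed in four steps.

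\textbf{Step 1: smooth convergence.} Since $\lambda_i\to\infty$, the boundaries $\partial\Omega_i$ converge in $C^2_{\mathrm{loc}}$ to $L=T_{x_0}\partial\Omega$, and the curvatures satisfy $\|\mathrm{II}_{\partial\Omega_i}\|\le K/\lambda_i\to0$. The Type I bound is scale invariant, so it becomes $|\kappa^{(i)}|\le C/\sqrt{-t}$. The estimates \eqref{eqn_FBCSF-derivative}, obtained from Theorem \ref{thm_Stahl-localMP}, then bound every derivative $|\partial_s^m T|$ on compact subsets of $H\times(-\infty,0)$, uniformly in $i$ and up to the boundary. Arzelà--Ascoli gives a smooth limit $\gamma^\infty_t$ with free boundary on $L$. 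This limit agrees with the Brakke limit $\mathcal M'$ of Proposition \ref{prop:edelen_blowup_compactness}.

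\textbf{Step 2: self-similarity.} Recentre at $X_0$ and write $\Theta(r)$ for the monotone quantity of Theorem \ref{thm_fb_monotonicity}, built from $f_{\partial\Omega,r,X_0}$. Because this quantity is monotone and bounded, the limit $\Theta(X_0)=\lim_{t\uparrow t_0}$ exists. Under rescaling by $\lambda_i$, the cutoff radius becomes $\lambda_i r\to\infty$ and the error terms $e^{A(-t)^{1/4}}$ and $AM(-t)$ tend to $1$ and $0$. For the rescaled flows, $\int f\,d\mu^{(i)}_t$ at any fixed $t<0$ therefore tends to $\Theta(X_0)$. Integrating the monotonicity identity over $[t_1,t_2]\subset(-\infty,0)$ then gives
\begin{align*}
\int_{t_1}^{t_2}\int_{\gamma^{(i)}_t}\Big|\vec\kappa+\frac{x^\perp}{2\hat\sigma}\Big|^2 f\,ds\,dt\;\longrightarrow\;0 .
\end{align*}
We use the form of the identity that keeps the dissipation term; equivalently, we apply Huisken's identity to the reflected limit. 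Passing to the limit via smooth convergence, in which the reflected Gaussian restricted to $H$ is the plain Gaussian, yields $\kappa N+x^\perp/(2\hat\sigma)=0$ on $\gamma^\infty_t$. Orthogonality to $L$ persists in the $C^1$ limit.

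\textbf{Step 3: reflection.} Let $R$ be the reflection across $L$ and set $\Sigma=\gamma^\infty_{-1}\cup R(\gamma^\infty_{-1})$. At each endpoint the curve is orthogonal to $L$, so the two halves meet with matching tangent, and $\Sigma$ is $C^1$. To upgrade to smoothness, take the point of $\gamma^\infty$ on $L$ as origin, so that $x^\perp$ is equivariant under $R$. The shrinker equation is a second-order ODE in a graph parametrisation over the tangent line. The reflected curve solves the same ODE with the same Cauchy data at the junction, and it is symmetric because $R$ preserves the equation. Uniqueness for this ODE then shows that $\Sigma$ is the smooth, $R$-symmetric solution.

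\textbf{Step 4: planarity (main obstacle).} The final assertion of the lemma is that $\Sigma\subset\mathbb R^2$. A smooth shrinker curve lies in a $2$-plane through the origin. Here is why: differentiating $\kappa N=-x^\perp/2$ shows that the osculating plane contains $x$, and this gives $\tau_1\equiv0$ wherever $\kappa\ne0$. If $\kappa$ vanishes at a point, then $\kappa$ vanishes to all orders there, so the curve is a line through $0$. The delicate point is keeping uniform control at the boundary during the blow-up, in particular that $\tau_1\to0$ at endpoints. This follows from \eqref{sffK}, since $|\tau_1|\le K/\lambda_i\to0$, so no boundary-induced torsion survives in the limit.
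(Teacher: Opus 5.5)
Your proposal is correct and follows essentially the paper's route: uniform estimates up to the boundary and Arzel\`a--Ascoli for the rescaled flows, then Edelen's reflected monotonicity to obtain the shrinker equation (the rescaled reflected Gaussian integrals all converge to the limit density $\Theta$, so the Huisken dissipation vanishes in the limit), followed by reflection across $L$. Your Steps 3 and 4 supply what the paper only asserts, namely smoothness across $L$ via ODE uniqueness and planarity from $\kappa\tau_1=0$ for shrinkers; note that the $R$-equivariance in Step 3 holds because the blow-up centre $0$ lies on $L$, so keep $0$ as the origin rather than moving it to the endpoint, since the shrinker equation is not translation invariant.
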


\begin{proof}
Pick $t_j \to t_0$, set $\lambda_j:=\frac{1}{\sqrt{t_0-t_j}}\to\infty$, and define the rescaled flows
	\begin{align*}
\gamma_t^{(j)}:=\lambda_j \bigl(\gamma_{t_0+t / \lambda_j^2}-x_0\bigr), \quad t \in\left(- T, 0\right).
	\end{align*}
Set $\sigma:=t_0-t$ and $\hat\sigma:=-t$ (so along the rescaled flow, we have $\sigma=(-t)/\lambda_j^2$ and $\hat\sigma=\lambda_j^2\sigma$). From now on, $(x, t)$ will denote the rescaled space-time variables and the corresponding original variables will be $\left(x_0+x / \lambda_j, t_0+t / \lambda_j^2\right)$. Type I is scale-invariant, so
	\begin{align*}
\left|\kappa^{(j)}(x, t)\right| \leq \lambda_j^{-1} \frac{C}{\sqrt{\sigma}}=\frac{C}{\sqrt{\hat{\sigma}}}.
	\end{align*}
Let $\Omega_j:=\lambda_j\left(\Omega-x_0\right)$ and $\partial \Omega_j=\lambda_j\left(\partial \Omega-x_0\right)$. Since $x_0 \in \partial \Omega$ and $\partial \Omega$ is $C^2$, the rescaled boundaries $\partial\Omega_j$ converge in $C_{\mathrm{loc}}^2$ to the tangent line $L:=T_{x_0}(\partial \Omega)$, and the rescaled domains $\Omega_j$ converge in $C_{\mathrm{loc}}^2$ to the corresponding half-plane $H$ bounded by $L$. Using the curvature and derivative estimates up to the boundary (so that on each compact subset of $H \times(-\infty, 0)$ we have uniform $C^k$ bounds), Arzel\`a--Ascoli implies that, after passing to a subsequence,
	\begin{align*}
\gamma_t^{(j)} \longrightarrow \gamma_{\infty} \text{ smoothly on compact subsets of } H \times(-\infty, 0),
	\end{align*}
where $\gamma_{\infty}$ is an ancient free-boundary CSF in the flat half-plane $H$. The cylinder we work on is in the rescaled coordinates $(x, t) \in B_R(0) \times[-T, 0)$. In original variables, this corresponds to
	\begin{align*}
\left(x_0+x / \lambda_j, t_0+t / \lambda_j^2\right) \in B_{R / \lambda_j}(x_0) \times\left[t_0- T/ \lambda_j^2, t_0\right),
	\end{align*}
which is a neighbourhood of $(x_0,t_0)$ for large $j$ and is contained in the region where Edelen's monotonicity holds (with kernel re-centred at $(x_0,t_0)$).

By translating space and time, we may assume $x_0=0$ and $t_0=0$. From Definition \ref{def471}, $\eta(\xi)=1$ whenever $\xi \leq 0$, $\eta(\xi) \in[0,1]$, and $\eta$ decreases from $1$ to $0$ as $\xi$ goes from $0$ to $1$. Let
	\begin{align*}
\psi(x, \sigma):=\left(\frac{r^2}{\sigma}\right)^{3 / 4} \frac{|x|^2-\alpha \sigma}{r^2}.
	\end{align*}
In the blow-up around $(0,0)$, we rescale $x\to\lambda_j^{-1} x$ and $\sigma=\frac{-t}{\lambda_j^2}$. Plugging this into $\psi$ gives
	\begin{align*}
\psi_j(x, t)&=\left(\frac{r^2}{(-t)/\lambda_j^2}\right)^{3 / 4} \frac{\lambda_j^{-2}|x|^2-\alpha (-t)/\lambda_j^2}{r^2}
=r^{-1 / 2} \lambda_j^{-1 / 2} (-t)^{-3 / 4}\left(|x|^2-\alpha(-t)\right).
	\end{align*}
Fix $R$ and $t\in[-T,0)$. Then $|x|\le R$ implies $|x|^2-\alpha(-t)$ is bounded, and $\lambda_j^{-1/2}\to 0$, hence $\psi_j(x,t)\to 0$ uniformly on compact sets. Therefore
	\begin{align*}
\phi^{(j)}(x, t):=\phi_{\partial\Omega, r}\left(\lambda_j^{-1} x, \lambda_j^{-2} t\right)=\eta\left(\psi_j(x, t)\right) \longrightarrow \eta(0)=1,
	\end{align*}
uniformly on compact sets in the rescaled variables, and similarly $\tilde{\phi}^{(j)}\to 1$. In particular,
	\begin{align*}
\phi^{(j)} \rightarrow 1, \quad \tilde{\phi}^{(j)} \rightarrow 1, \quad \nabla \phi^{(j)}\rightarrow 0, \quad \partial_t \phi^{(j)}\rightarrow 0.
	\end{align*}
Also $\partial\Omega_j \rightarrow L$ implies $\tilde{x}$ is reflection across $L$ in the limit, and
	\begin{align*}
e^{A\left((-t)/\lambda_j^2 \right)^{1 / 4}} \longrightarrow 1, \qquad A M\left((-t)/\lambda_j^2\right) \longrightarrow 0.
	\end{align*}

Applying Edelen's reflected monotonicity formula with the recentered reflected heat kernel to the original flow near $(x_0,t_0)$, rewriting it for the rescaled flows $\gamma^{(j)}$, and using the above convergences, we may pass to the limit along the subsequence $\gamma^{(j)}\to\gamma_\infty$ to obtain the exact reflected Huisken monotonicity identity in the flat half-plane $H$:
	\begin{align}\label{finaleq}
\frac{d}{d t} \int_{\gamma_{\infty,t}}(\rho+\tilde{\rho}) \, d\mu_t
=-\int_{\gamma_{\infty,t}}\left|\kappa N+\frac{x^{\perp}}{2(-t)}\right|^2(\rho+\tilde{\rho}) \, d\mu_t,
	\end{align}
for $t<0$, where $\rho(x,t)=(4\pi(-t))^{-1/2}\exp\left(-\frac{|x|^2}{4(-t)}\right)$ and $\tilde\rho(x,t)=\rho(\tilde x,t)$ with $\tilde x$ the reflection across $L$.

Next, let
	\begin{align*}
\Phi(t):=\int_{\gamma_t}\left(\rho_{(x_0,t_0)}+\tilde\rho_{(x_0,t_0)}\right)\,d\mu_t
	\end{align*}
denote the reflected Gaussian quantity for the original flow, centred at $(x_0,t_0)$. By Edelen's monotonicity formula, $\Phi(t)$ is monotone nonincreasing for $t$ sufficiently close to $t_0$, hence the limit
	\begin{align*}
\Theta:=\lim_{t\uparrow t_0}\Phi(t)
	\end{align*}
exists. Fix any $t<0$. For each $j$, by the parabolic change of variables defining $\gamma_t^{(j)}$ and the corresponding scaling of the heat kernels, the quantity
	\begin{align*}
\int_{\gamma_t^{(j)}}(\rho+\tilde\rho)\,d\mu_t
	\end{align*}
is exactly $\Phi\left(t_0+t/\lambda_j^2\right)$ (with the same centre $(x_0,t_0)$). Since $t_0+t/\lambda_j^2 \to t_0$ as $j\to\infty$, we obtain
	\begin{align*}
\lim_{j\to\infty}\int_{\gamma_t^{(j)}}(\rho+\tilde\rho)\,d\mu_t
=\lim_{j\to\infty}\Phi\left(t_0+t/\lambda_j^2\right)=\Theta.
	\end{align*}
Passing to the smooth limit $\gamma_t^{(j)}\to\gamma_{\infty,t}$ on compact sets and using dominated convergence for the Gaussian weight yields
	\begin{align*}
\int_{\gamma_{\infty,t}}(\rho+\tilde\rho)\,d\mu_t=\Theta
	\end{align*}
for every $t<0$. Thus the left-hand side of \eqref{finaleq} vanishes for all $t<0$, and hence
	\begin{align*}
\kappa N+\frac{x^\perp}{2(-t)}\equiv 0 \quad \text{ on } \gamma_{\infty,t}, \qquad t<0.
	\end{align*}
Therefore $\gamma_{\infty,t}$ is a free-boundary self-shrinker in the half-plane $H$.

Finally, since the limiting boundary is a line and the free-boundary condition is orthogonality, reflecting $\gamma_{\infty,t}$ across $L$ produces a smooth interior self-shrinker $\Sigma\subset\mathbb{R}^2$.
\end{proof}

\begin{lemma}[Uniform reflected Gaussian bound and scaling]\label{lem_uniform-reflected-gaussian-bound}
Let $\{\gamma_t\}_{t\in[0,t_0)}$ be a smooth free-boundary curve shortening flow in a smooth domain $\Omega\subset\mathbb{R}^{n+1}$ with orthogonal boundary condition.
Fix a boundary spacetime point $X_0=(x_0,t_0)$ and a truncation scale $r>0$.
Let $f_{\partial\Omega,r,X_0}(x,t)$ be the truncated reflected kernel from Definition \ref{def:reflected_truncated_kernel} and set
	\begin{align*}
	\Phi_{r,X_0}(t):=\int_{\gamma_t} f_{\partial\Omega,r,X_0}(\cdot,t)\, d\mathcal{H}^1,\quad t<t_0.
	\end{align*}
Assume the reflected entropy is finite and dominates these functionals, namely
	\begin{align*}
	\operatorname{Ent}_{\partial\Omega}[\{\gamma_t\}]
:=
\sup_{(r,X)}\ \sup_{s<t_X}\ \int_{\gamma_s} f_{\partial\Omega,r,X}(\cdot,s)\, d\mathcal{H}^1
<\infty,
	\end{align*}
where $X=(x_X,t_X)$ ranges over boundary spacetime points and $r>0$.
Then for every $t<t_0$,
	\begin{align*}
	\Phi_{r,X_0}(t)\le \operatorname{Ent}_{\partial\Omega}[\{\gamma_t\}].
	\end{align*}

Now choose $t_j\to t_0$, set $\lambda_j:=(t_0-t_j)^{-1/2}$, and define the parabolic rescalings
	\begin{align*}
	\gamma_t^{(j)}:=\lambda_j(\gamma_{t_0+t/\lambda_j^2}-x_0),\quad t\in(-\lambda_j^2 t_0,0).
	\end{align*}
Let
	\begin{align*}
	\rho(x,t):=(4\pi(-t))^{-1/2}\exp\Bigl(-\frac{|x|^2}{4(-t)}\Bigr)
	\end{align*}
and let $\tilde\rho(x,t):=\rho(\tilde x,t)$ be the reflected Gaussian after flattening the boundary at $x_0$.
Let $\phi^{(j)}$, $\tilde\phi^{(j)}$ be the rescaled cut-offs induced from Definition \ref{def:reflected_truncated_kernel} under $(x,t)\mapsto(x_0+x/\lambda_j,t_0+t/\lambda_j^2)$.
Define
	\begin{align*}
	\Phi^{(j)}(t):=\int_{\gamma_t^{(j)}}\Bigl(\rho(\cdot,t)\phi^{(j)}(\cdot,t)+\tilde\rho(\cdot,t)\tilde\phi^{(j)}(\cdot,t)\Bigr)\, d\mathcal{H}^1.
	\end{align*}
Then for each fixed $t<0$,
	\begin{align*}
	\Phi^{(j)}(t)=\Phi_{r,X_0}\bigl(t_0+t/\lambda_j^2\bigr)\le \operatorname{Ent}_{\partial\Omega}[\{\gamma_t\}].
	\end{align*}
In particular, for every $T<\infty$,
	\begin{align*}
	\sup_j\ \sup_{t\in[-T,0)}\Phi^{(j)}(t)\le \operatorname{Ent}_{\partial\Omega}[\{\gamma_t\}.
	\end{align*}.
\end{lemma}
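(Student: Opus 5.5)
The statement has two parts: a first inequality that is immediate from the definition of the entropy, and a scaling identity that transfers it to the rescaled flows.

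\emph{First inequality.} The reflected entropy is defined as a supremum over all boundary spacetime points $X$, all truncation scales $r>0$, and all times $s<t_X$. The pair $(r,X_0)$ with $s=t<t_0$ is one admissible choice in that supremum. Hence $\Phi_{r,X_0}(t)\le \operatorname{Ent}_{\partial\Omega}[\{\gamma_t\}]$ with no further work.

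\emph{Scaling identity.} The substantive step is to show that $\Phi^{(j)}(t)=\Phi_{r,X_0}(t_0+t/\lambda_j^2)$, which is a change of variables. I would proceed in four steps.

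\begin{itemize}
\item[1.] Write $y=x_0+x/\lambda_j$ and $s=t_0+t/\lambda_j^2$. The map $x\mapsto y$ sends $\gamma^{(j)}_t$ onto $\gamma_s$, and length scales as $d\mathcal{H}^1(y)=\lambda_j^{-1}\,d\mathcal{H}^1(x)$.
\item[2.] Since the curves are one-dimensional, the kernel to use is the one-dimensional Gaussian $(4\pi\hat\sigma)^{-1/2}e^{-|\cdot|^2/4\hat\sigma}$, with the exponent $1/2$ matching the definition of $\rho$ in the lemma. It satisfies
\begin{align*}
\rho_{\partial\Omega}(y-x_0,s-t_0)=\lambda_j\,\rho(x,t),
\end{align*}
because $\hat\sigma=t_0-s=(-t)/\lambda_j^2$ and $|y-x_0|^2/\hat\sigma=|x|^2/(-t)$. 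This factor $\lambda_j$ exactly cancels the factor $\lambda_j^{-1}$ from the measure.
\item[3.] The cut-offs $\phi^{(j)}$ and $\tilde\phi^{(j)}$ are, by definition, the pullbacks of $\phi_{\partial\Omega,r}$ and $\tilde\phi_{\partial\Omega,r}$ under the same map. Their contributions therefore match term by term.
\item[4.] For the reflected term, define the reflection in the rescaled picture as the conjugate of the original reflection: $\tilde x:=\lambda_j(\widetilde{y}-x_0)$, where $\widetilde y=2\zeta(y)-y$ is the nearest-point reflection across $\partial\Omega$. This is reflection across $\partial\Omega_j$, because nearest-point projection commutes with dilations: the projection onto $\lambda_j(\partial\Omega-x_0)$ is $\lambda_j(\zeta(\cdot)-x_0)$ after the change of variables. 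With this convention the reflected terms also agree exactly.
\end{itemize}

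Summing the two terms gives the identity $\Phi^{(j)}(t)=\Phi_{r,X_0}(t_0+t/\lambda_j^2)$.

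\emph{Main obstacle.} The only delicate point is step 4. The lemma speaks of reflection ``after flattening the boundary at $x_0$'', that is, reflection across the tangent line $L$. But the identity $\Phi^{(j)}(t)=\Phi_{r,X_0}(\cdot)$ holds exactly only if $\tilde x$ means reflection across the rescaled boundary $\partial\Omega_j$. I would therefore adopt the conjugate-reflection convention for each finite $j$. The discrepancy with reflection across $L$ only matters in the limit. There $\partial\Omega_j\to L$ in $C^2_{\mathrm{loc}}$, so the two reflections agree up to $o(1)$ on compact sets, and the Gaussian tails are controlled by the truncation.

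\emph{Conclusion.} For each $t\in[-T,0)$ we have $t_0+t/\lambda_j^2<t_0$. The first part then gives $\Phi^{(j)}(t)\le \operatorname{Ent}_{\partial\Omega}[\{\gamma_t\}]$, a bound independent of both $j$ and $t$. Taking the supremum over $j$ and $t$ yields the final uniform bound.
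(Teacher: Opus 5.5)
Your proposal is correct and follows the paper's proof. The first bound is read off from the supremum defining $\operatorname{Ent}_{\partial\Omega}$, and the identity $\Phi^{(j)}(t)=\Phi_{r,X_0}(t_0+t/\lambda_j^2)$ comes from the factor $\lambda_j$ cancelling between the one-dimensional Gaussian and the length element, with the cut-offs treated as pullbacks. You add two points the paper leaves implicit: you use the $(4\pi\hat\sigma)^{-1/2}$ normalisation rather than the $-n/2$ exponent in the Definition, and you reflect across $\partial\Omega_j=\lambda_j(\partial\Omega-x_0)$ rather than the tangent line $L$. Both are exactly what the paper's ``likewise for the reflected term'' tacitly requires for the identity to hold exactly.
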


\begin{proof}
The first inequality is immediate from the defining supremum for $\operatorname{Ent}_{\partial\Omega}[\{\gamma_t\}]$.

Fix $t<0$ and write $s=t_0+t/\lambda_j^2$.
The rescaling map is $x'= \lambda_j(x-x_0)$, so along curves $d\mathcal{H}^1_{x'}=\lambda_j d\mathcal{H}^1_x$.
A direct computation gives
	\begin{align*}
	\rho(x',t)=\lambda_j^{-1}\rho(x-x_0,t/\lambda_j^2).
	\end{align*}
Hence $\rho(\cdot,t)\, d\mathcal{H}^1$ is invariant under the parabolic rescaling.
Since $\phi^{(j)}$, $\tilde\phi^{(j)}$ are pullbacks of the original cut-offs, the same change of variables yields
	\begin{align*}
	\int_{\gamma_t^{(j)}}\rho(\cdot,t)\phi^{(j)}(\cdot,t)\, d\mathcal{H}^1
=
\int_{\gamma_s}\rho_{x_0,t_0}(\cdot,s)\phi_{\partial\Omega,r,x_0}(\cdot,s)\, d\mathcal{H}^1,
	\end{align*}
and likewise for the reflected term.
By Definition \ref{def:reflected_truncated_kernel}, the right-hand side equals $\Phi_{r,X_0}(s)$.
This proves $\Phi^{(j)}(t)=\Phi_{r,X_0}(t_0+t/\lambda_j^2)$, and the bound follows from the first part.
\end{proof}

\begin{lemma}[Limit passage for reflected monotonicity via fixed cut-offs]\label{lem_limit-passage-fixed-cutoff}
Assume the setting of Lemma \ref{lem_uniform-reflected-gaussian-bound}.
Assume moreover that, after passing to a subsequence, the rescaled flows $\gamma_t^{(j)}$ converge smoothly on compact subsets of $\mathbb{R}^{n+1}\times(-\infty,0)$ to a smooth limit flow $\gamma_t^{(\infty)}$ (after flattening and reflection as in Section \ref{section6}).

Let $\eta_R\in C_c^\infty(\mathbb{R}^{n+1})$ satisfy $0\le\eta_R\le 1$, $\eta_R\equiv 1$ on $B_R(0)$, $\eta_R\equiv 0$ on $\mathbb{R}^{n+1}\setminus B_{2R}(0)$, and $|\nabla\eta_R|+|\nabla^2\eta_R|\le C/R$. Then for every $t_1<t_2<0$ and every $R<\infty$,
	\begin{align*}
	\int_{\gamma_{t_2}^{(\infty)}}(\rho+\tilde\rho)\eta_R\, d\mathcal{H}^1
-
\int_{\gamma_{t_1}^{(\infty)}}(\rho+\tilde\rho)\eta_R\, d\mathcal{H}^1
&=
-\int_{t_1}^{t_2}\int_{\gamma_t^{(\infty)}}
\Bigl|\kappa^{(\infty)}+\frac{\langle x,\nu^{(\infty)}\rangle}{2(-t)}\Bigr|^2
(\rho+\tilde\rho)\eta_R\, d\mathcal{H}^1 dt\\
&+ \operatorname{Ent}_\infty(R;t_1,t_2),
	\end{align*}
where the error term satisfies
	\begin{align*}
	|\operatorname{Ent}_\infty(R;t_1,t_2)|
\le \frac{C}{R}\int_{t_1}^{t_2}\int_{\gamma_t^{(\infty)}\cap(B_{2R}(0)\setminus B_R(0))}
(\rho+\tilde\rho)\, d\mathcal{H}^1 dt.
	\end{align*}
In particular, since the limit has finite reflected Gaussian integral for each $t<0$ by Lemma \ref{lem_uniform-reflected-gaussian-bound} and Fatou,
letting $R\to\infty$ yields the exact reflected Huisken identity for the limit
	\begin{align*}
	\int_{\gamma_{t_2}^{(\infty)}}(\rho+\tilde\rho)\, d\mathcal{H}^1
-
\int_{\gamma_{t_1}^{(\infty)}}(\rho+\tilde\rho)\, d\mathcal{H}^1
=
-\int_{t_1}^{t_2}\int_{\gamma_t^{(\infty)}}
\Bigl|\kappa^{(\infty)}+\frac{\langle x,\nu^{(\infty)}\rangle}{2(-t)}\Bigr|^2
(\rho+\tilde\rho)\, d\mathcal{H}^1 dt.
	\end{align*}
\end{lemma}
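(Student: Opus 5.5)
The plan is to derive the localized identity directly on the limit flow $\gamma^{(\infty)}_t$ by the classical Huisken computation with a compactly supported weight, and then pass $R\to\infty$. The limit is a smooth free-boundary curve shortening flow in the half-space $H$, meeting $L$ orthogonally, with smooth local convergence from Lemma \ref{lemma1}. Equivalently, we may reflect across $L$ and work with the reflected curve $\widehat\gamma_t=\gamma^{(\infty)}_t\cup\widetilde{\gamma}^{(\infty)}_t$. Because the contact is orthogonal and $L$ is flat, $\widehat\gamma_t$ is a smooth curve shortening flow without boundary (the odd derivatives match by the endpoint relations of Lemma \ref{lem_endpoint-control} with $K=0$). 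Moreover, $\int_{\gamma^{(\infty)}_t}(\rho+\tilde\rho)\eta_R=\int_{\widehat\gamma_t}\rho\,\eta_R$ provided $\eta_R$ is chosen radial. A radial $\eta_R$ is invariant under reflection across the line $L$ through $0$, which is legitimate since only the stated properties of $\eta_R$ are used.

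The key step is the standard first-variation computation for a compactly supported test function. Since $\rho$ is a backward heat kernel, we have $\frac{d}{dt}\rho+\Delta_{\gamma}\rho-|\kappa N|^2\rho\cdot 0\ldots$ More precisely, we use the well-known identity
\begin{align*}
\frac{d}{dt}\int_{\widehat\gamma_t}\rho\,\eta_R\,d\mathcal H^1=-\int_{\widehat\gamma_t}\Bigl|\kappa N+\frac{x^\perp}{2(-t)}\Bigr|^2\rho\,\eta_R\,d\mathcal H^1+\int_{\widehat\gamma_t}\rho\,\bigl(\partial_t\eta_R-\partial_s^2\eta_R\bigr)-2\langle\nabla\rho,\nabla^\gamma\eta_R\rangle\,d\mathcal H^1.
\end{align*}
Integrating by parts the last term turns $-2\langle\nabla^\gamma\rho,\nabla^\gamma\eta_R\rangle$ into $2\rho\,\partial_s^2\eta_R$, and no boundary terms arise on the closed (or properly immersed, compactly cut off) reflected curve. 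Since $\eta_R$ is time-independent, the error integrand becomes $\rho\,(\partial_s^2\eta_R+\ldots)$, where $\partial_s^2\eta_R=\nabla^2\eta_R(T,T)+\langle\nabla\eta_R,\kappa N\rangle$. Alternatively, we can avoid integrating by parts and bound $|\langle\nabla\rho,\nabla\eta_R\rangle|\le\frac{|x|}{2(-t)}\rho|\nabla\eta_R|$. Both the $\kappa$ term and the $|x|/(-t)$ factor are bounded on the annulus $B_{2R}\setminus B_R$ for $t\in[t_1,t_2]$ (using the Type I bound $|\kappa|\le C/\sqrt{-t}$ inherited by the limit), and these factors are absorbed into the constant $C$. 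All error terms are supported in the annulus because $\nabla\eta_R$ vanishes elsewhere. Integrating in time from $t_1$ to $t_2$ and splitting back into $\rho+\tilde\rho$ on $\gamma^{(\infty)}$ gives the identity with $\operatorname{Ent}_\infty(R;t_1,t_2)$ bounded as claimed.

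For $R\to\infty$, Lemma \ref{lem_uniform-reflected-gaussian-bound} gives $\Phi^{(j)}(t)\le\operatorname{Ent}_{\partial\Omega}$ uniformly. On compact sets $\phi^{(j)},\tilde\phi^{(j)}\to1$, as computed in Lemma \ref{lemma1}, so Fatou yields $\int_{\gamma^{(\infty)}_t}(\rho+\tilde\rho)\le\operatorname{Ent}_{\partial\Omega}$ for each $t$, uniformly on $[t_1,t_2]$. Hence the annulus integral is bounded by $(t_2-t_1)\operatorname{Ent}_{\partial\Omega}$, and the error is $O(1/R)\to0$. The left-hand sides converge by monotone convergence, and the squared term on the right converges by monotone convergence as well, since $\eta_R\uparrow1$ when $\eta_R$ is chosen nested. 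This gives the exact identity.

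The main obstacle is justifying the uniform control of the error constants. The factors $|x|/(-t)$ and $|\kappa|$ grow with $R$, so a naive $C/R$ bound fails unless one exploits the Gaussian decay $\rho\lesssim e^{-R^2/8(-t)}$ on the annulus, which dominates any polynomial in $R$. I would therefore first try to state the error bound with $C=C(t_1,t_2)$ after absorbing $R|\nabla\eta_R|\le C$. Failing that, I would simply prove directly that the error tends to $0$ using the Gaussian decay together with the entropy bound, which suffices for the final identity.
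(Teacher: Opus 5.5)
Your overall route is the paper's: reflect the limit across the flat line $L$, run the localised Huisken computation with a compactly supported cut-off, note that the error lives on $B_{2R}(0)\setminus B_R(0)$, obtain $\int_{\gamma_t^{(\infty)}}(\rho+\tilde\rho)\,d\mathcal H^1\le\operatorname{Ent}_{\partial\Omega}[\{\gamma_t\}]$ uniformly in $t$ from Lemma~\ref{lem_uniform-reflected-gaussian-bound} and Fatou, and let $R\to\infty$.

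The weak point is the step you flag as the main obstacle. It is not a real difficulty; it comes from a sign error and an incomplete integration by parts. On the doubled curve $\widehat\gamma_t$, which has no boundary, you have Huisken's pointwise identity $\frac{d}{dt}\rho-\kappa^2\rho=-\partial_s^2\rho-\bigl|\kappa N+\frac{x^\perp}{2(-t)}\bigr|^2\rho$. Combining it with two integrations by parts gives
\[
\frac{d}{dt}\int_{\widehat\gamma_t}\rho\,\eta_R\,d\mathcal H^1=-\int_{\widehat\gamma_t}\Bigl|\kappa N+\frac{x^\perp}{2(-t)}\Bigr|^2\rho\,\eta_R\,d\mathcal H^1+\int_{\widehat\gamma_t}\rho\,\bigl(\langle\nabla\eta_R,\kappa N\rangle-\partial_s^2\eta_R\bigr)\,d\mathcal H^1 .
\]
Here $\langle\nabla\eta_R,\kappa N\rangle$ is the time derivative of $\eta_R$ along the flow. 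Since $\partial_s^2\eta_R=\nabla^2\eta_R(T,T)+\langle\nabla\eta_R,\kappa N\rangle$, the curvature terms cancel. The error integrand is therefore exactly $-\rho\,\nabla^2\eta_R(T,T)$, and $|\nabla^2\eta_R|\le C/R$ gives the stated bound on $\operatorname{Ent}_\infty(R;t_1,t_2)$ at once.

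Your version has the wrong sign on the cross term.
\begin{itemize}
\item The form $\rho(\partial_t\eta_R+\partial_s^2\eta_R)$ leaves a spurious $2\rho\langle\nabla\eta_R,\kappa N\rangle$.
\item The half-integrated form with $\langle\nabla\rho,\nabla\eta_R\rangle$ only gives $\frac{|x|}{2(-t)}|\nabla\eta_R|\lesssim\frac{1}{-t}$ on the annulus. That is a constant $C(t_1,t_2)$, not $C/R$.
\end{itemize}
So your claim that these factors are ``absorbed into $C$'' does not yield the $C/R$ estimate, and your later ``$O(1/R)$'' is unjustified as written. You also invoke a Type I bound on $\kappa^{(\infty)}$. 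That is not among the lemma's hypotheses, which give only smooth local convergence and the entropy bound. With the correct formula, no curvature bound is needed.

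Your fallback is $C(t_1,t_2)$ times the annulus integral, which tends to $0$ by the uniform bound and dominated convergence in $t$, or alternatively Gaussian decay. That still gives the exact reflected identity for the limit, but not the first assertion of the lemma as stated.

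One thing you handle better than the paper is the choice of cut-off. A radial, nested $\eta_R$ is invariant under reflection across $L$, which passes through $0$. The doubled curve then genuinely carries a smooth test function, and no endpoint terms appear. For a cut-off with $\partial_{\nu_L}\eta_R\neq 0$ on $L$, the computation on the half-curve leaves an endpoint term $\pm(\rho+\tilde\rho)\,\partial_{\nu_L}\eta_R$ at $\gamma_t^{(\infty)}\cap L$, which the paper's proof passes over.
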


\begin{proof}
Since $\gamma_t^{(\infty)}$ is smooth and (after reflection across the flat boundary line) is an interior smooth curve shortening flow, the standard computation for the quantity
	\begin{align*}
	\int_{\gamma_t^{(\infty)}}(\rho+\tilde\rho)\eta_R\, d\mathcal{H}^1
	\end{align*}
gives the stated identity, with an error supported where $\nabla\eta_R\ne 0$, namely in $B_{2R}(0)\setminus B_R(0)$.
The bound for $\operatorname{Ent}_\infty(R;t_1,t_2)$ follows from $|\nabla\eta_R|+|\nabla^2\eta_R|\le C/R$.

By Lemma \ref{lem_uniform-reflected-gaussian-bound} and Fatou, for each fixed $t<0$,
	\begin{align*}
	\int_{\gamma_t^{(\infty)}}(\rho+\tilde\rho)\, d\mathcal{H}^1<\infty.
	\end{align*}
Hence the right-hand side bound implies $\operatorname{Ent}_\infty(R;t_1,t_2)\to 0$ as $R\to\infty$.
Also $\eta_R\to 1$ pointwise, so monotone convergence yields convergence of the cut-off integrals to the full integrals, giving the final identity.
\end{proof}

\begin{lemma}[No flat tangent flow at a singular point]\label{lemma3}
Let $\{\gamma_t\}_{t<t_0}$ be a smooth curve shortening flow in $\mathbb R^{n+1}$, and let $(x_0,t_0)$ be a singular spacetime point of the flow. Then no tangent flow at $(x_0,t_0)$ is a multiplicity-one static straight line.
\end{lemma}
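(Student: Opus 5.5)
The plan is to argue by contradiction, combining Huisken's monotonicity (Gaussian density) with White's (or Brakke's) local regularity theorem. Suppose some sequence $\lambda_i\to\infty$ gives rescalings
\begin{align*}
\gamma^{(i)}_t:=\lambda_i\bigl(\gamma_{t_0+t/\lambda_i^2}-x_0\bigr)
\end{align*}
that converge, as Brakke flows, to a static multiplicity-one line $\ell$ through the origin.

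First I relate density to the tangent flow. The Gaussian density of the original flow at $(x_0,t_0)$ is
\begin{align*}
\Theta(x_0,t_0)=\lim_{t\uparrow t_0}\int_{\gamma_t}\rho_{(x_0,t_0)}\,d\mathcal H^1 .
\end{align*}
This limit exists by Huisken's monotonicity. By scale invariance of $\rho\,d\mathcal H^1$, the same computation as in Lemma \ref{lem_uniform-reflected-gaussian-bound} shows it equals the Gaussian density of any tangent flow. For a multiplicity-one line this gives $\Theta(x_0,t_0)=1$. The convergence of the Gaussian integrals uses the area-ratio bounds from monotonicity, which give uniform Gaussian tails; so the Brakke convergence upgrades to convergence of the weighted integrals.

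Next I apply White's local regularity theorem. There is $\e_0>0$ such that if a smooth flow has Gaussian density ratios $\le 1+\e_0$ at all points and scales in a parabolic neighbourhood, then $|\kappa|\le C/r$ there. Fix $\tau>0$. By the convergence and the upper semicontinuity of Gaussian density ratios, for large $i$ the density ratios of $\gamma^{(i)}$ centred at any $(y,s)\in B_2(0)\times[-2,0]$, at scales up to $\tau$, are at most $1+\e_0$. The reason is that the corresponding quantities for the static line equal $1$ exactly, and Brakke convergence together with monotonicity controls them from above. White's theorem then gives $|\kappa^{(i)}|\le C$ on $B_1(0)\times[-1,0)$.

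Scaling back gives $|\kappa|\le C\lambda_i$ on $B_{1/\lambda_i}(x_0)\times[t_0-\lambda_i^{-2},t_0)$. This is a uniform bound on a fixed backward parabolic neighbourhood of $(x_0,t_0)$, which contradicts the assumption that $(x_0,t_0)$ is singular, i.e.\ that $\kappa$ is unbounded near it. Equivalently, by the regularity criterion, the flow extends smoothly past $t_0$ near $x_0$.

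If one prefers to avoid citing White, there is an alternative for curves. Smooth convergence to the line on compact sets away from $t=0$, together with the boundedness of the density ratios, lets one use the pseudolocality/graphical estimate: the curve stays a $C^1$-small graph over $\ell$ in $B_1\times[-1,0)$. Interior estimates for graphical CSF, in the spirit of the Ecker--Huisken interior estimates, then bound the curvature, giving the same contradiction.

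The main obstacle is the density step. Brakke convergence alone does not rule out extra mass escaping, nor small loops disappearing into the limit. The argument must use that Gaussian density ratios are upper semicontinuous under Brakke convergence, and that the limit line has density exactly $1$ everywhere at all scales. That requires the area-ratio bounds coming from finite entropy, which hold for a smooth compact initial curve. This is also the step where multiplicity one is essential: a multiplicity-two line has density $2$ and gives no contradiction.
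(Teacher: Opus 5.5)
Your proposal is correct and follows the same route as the paper: you identify the Gaussian density at $(x_0,t_0)$ with that of the tangent line, which equals $1$, and then invoke White's local regularity theorem to contradict singularity. The only difference is that you explicitly verify White's hypothesis, namely density ratios at most $1+\e_0$ at all nearby centres and scales, using convergence of the rescaled flows and Huisken monotonicity. The paper instead cites the pointwise-density corollary of White's theorem directly.
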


\begin{proof}
Fix a tangent flow at $(x_0,t_0)$ obtained by parabolic rescalings
	\begin{align*}
	\gamma^{(j)}_t:=\lambda_j\bigl(\gamma_{t_0+t/\lambda_j^2}-x_0\bigr),\qquad \lambda_j\to\infty,
	\end{align*}
and pass to a subsequence so that $\gamma^{(j)}\to\gamma^\infty$ smoothly on compact subsets of $\mathbb R^{n+1}\times(-\infty,0)$ (as in the Type I compactness argument).

Assume for contradiction that $\gamma^\infty_t\equiv L$ is a multiplicity-one straight line (independent of $t$). Then for every fixed $t<0$ and every $R<\infty$,
	\begin{align*}
	\int_{\gamma^{(j)}_t\cap B_R(0)} \rho(\cdot,t)\,d\mu^{(j)}_t
\longrightarrow
\int_{L\cap B_R(0)} \rho(\cdot,t)\,d\mathcal H^1,
	\end{align*}
and letting $R\to\infty$ gives
	\begin{align*}
	\lim_{j\to\infty}\int_{\gamma^{(j)}_t} \rho(\cdot,t)\,d\mu^{(j)}_t= \int_L \rho(\cdot,t)\,d\mathcal H^1
=1,
	\end{align*}
since the Gaussian density of a multiplicity-one line equals $1$.

Translating back to the unrescaled flow, this identity is exactly the statement that the Gaussian density of $\gamma_t$ at $(x_0,t_0)$ equals $1$:
	\begin{align*}
	\Theta(\gamma,(x_0,t_0))=\lim_{t\uparrow t_0}\int_{\gamma_t}\rho_{(x_0,t_0)}(\cdot,t)\,d\mu_t=1.
	\end{align*}
By White's local regularity theorem there exists $\e>0$ such that if $\Theta(\gamma(x_0,t_0))\le 1+\e$, then $(x_0,t_0)$ is a regular point of the flow. In particular, density $1$ forces regularity at $(x_0,t_0)$, contradicting that $(x_0,t_0)$ is singular. Hence $\gamma^\infty$ cannot be a multiplicity-one line.
\end{proof}

\begin{lemma}[\cite{Edelen2016}](Entropy passes to tangent flows).\label{lem_entropy_to_tangent}
Let $\{\gamma_t\}_{t\in[0,t_0)}$ be a smooth curve shortening flow with $\operatorname{Ent}(\gamma_0)<\infty$.
Let $\Sigma$ be any time-slice of any tangent flow at $(x_0,t_0)$. Then
	\begin{align*}
	\operatorname{Ent}(\Sigma)\le \operatorname{Ent}(\gamma_0).
	\end{align*}
\end{lemma}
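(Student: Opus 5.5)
The plan is to combine Huisken's monotonicity with invariance of the Gaussian functionals under parabolic rescaling and translation, and then pass the bound to the limit using lower semicontinuity under smooth convergence. The entropy of a curve $\Gamma$ is taken to be
\begin{align*}
\operatorname{Ent}(\Gamma)=\sup_{y\in\mathbb R^{n+1},\,s>0}\int_\Gamma (4\pi s)^{-1/2}e^{-|x-y|^2/4s}\,d\mathcal H^1,
\end{align*}
and the entropy of the flow is the supremum over space-time centres $(y,t_1)$ with $t_1>t$ of the Gaussian integrals $F_{(y,t_1)}(t):=\int_{\gamma_t}\rho_{(y,t_1)}(\cdot,t)\,d\mathcal H^1$.

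\emph{Step 1 (entropy is nonincreasing along the flow).} For every centre $(y,t_1)$ and every $0\le t<t_1$, Huisken's monotonicity formula (the interior, unreflected version of the identity in Lemma \ref{lem_limit-passage-fixed-cutoff}, with no cut-off needed since the flow is smooth on $[0,t_0)$) gives
\begin{align*}
F_{(y,t_1)}(t)\le F_{(y,t_1)}(0)=\int_{\gamma_0}(4\pi t_1)^{-1/2}e^{-|x-y|^2/4t_1}\,d\mathcal H^1\le \operatorname{Ent}(\gamma_0).
\end{align*}
Taking the supremum over $(y,t_1)$ yields $\operatorname{Ent}(\gamma_t)\le\operatorname{Ent}(\gamma_0)$ for every $t<t_0$. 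The use of the formula needs justifying only for curves with infinite ends; for compact $\gamma_t$ it is immediate. If the curves are noncompact, I would insert the cut-off $\eta_R$ exactly as in Lemma \ref{lem_limit-passage-fixed-cutoff} and use finiteness of $\operatorname{Ent}(\gamma_0)$, which gives at most linear length growth, to send $R\to\infty$.

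\emph{Step 2 (scale invariance).} For $\gamma^{(j)}_t=\lambda_j(\gamma_{t_0+t/\lambda_j^2}-x_0)$, the computation in Lemma \ref{lem_uniform-reflected-gaussian-bound} shows that $\rho(\cdot,t)\,d\mathcal H^1$ is invariant under parabolic rescaling, and the same holds trivially for translations. Hence $\operatorname{Ent}(\gamma^{(j)}_t)=\operatorname{Ent}(\gamma_{t_0+t/\lambda_j^2})\le\operatorname{Ent}(\gamma_0)$ for every $j$ and every $t<0$.

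\emph{Step 3 (passage to the limit).} This is the main point. Fix $t<0$, a centre $y$ and a scale $s>0$. Along the subsequence defining the tangent flow, $\gamma^{(j)}_t\to\Sigma$ smoothly on compact sets; in general this convergence is as Radon measures, via Proposition \ref{prop:edelen_blowup_compactness}. For each $R$, smooth local convergence gives
\begin{align*}
\int_{\Sigma\cap B_R}\rho_{y,s}\,d\mathcal H^1=\lim_{j}\int_{\gamma^{(j)}_t\cap B_R}\rho_{y,s}\,d\mathcal H^1\le\liminf_j\operatorname{Ent}(\gamma^{(j)}_t)\le\operatorname{Ent}(\gamma_0).
\end{align*}
In the measure-convergence setting I would replace $\chi_{B_R}$ by a continuous compactly supported function $0\le\eta\le1$. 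Letting $R\to\infty$ by monotone convergence and then taking the supremum over $(y,s)$ gives $\operatorname{Ent}(\Sigma)\le\operatorname{Ent}(\gamma_0)$. Only lower semicontinuity is needed here, not convergence of the full integrals, so mass escaping to infinity causes no difficulty. The genuine subtlety is that the limit may carry integer multiplicity, which the Brakke-flow framework handles by counting multiplicity in $\mu$. The inequality is still correct in that case, since the bound holds for the limiting measure. In the free-boundary setting, the same argument with the reflected functionals and Lemma \ref{lem_uniform-reflected-gaussian-bound} gives the analogous statement for the reflected entropy.
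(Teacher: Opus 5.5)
Your proposal is correct and follows essentially the same route as the paper. Both arguments use monotonicity to bound every Gaussian functional of $\gamma_t$ by $\operatorname{Ent}(\gamma_0)$, use invariance under parabolic rescaling to transfer this bound to $\gamma^{(j)}_t$, and then pass to the tangent flow. The only difference is in that last step: the paper asserts full convergence of the Gaussian integrals by "Gaussian decay", which implicitly needs uniform area-ratio bounds to control the tails, whereas your truncate-then-monotone-convergence argument uses only lower semicontinuity and so is slightly more economical.
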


\begin{proof}
For each $(y,\theta)$ with $\theta>t$, the Gaussian functional
	\begin{align*}
	\mathcal F_{(y,\theta)}(\gamma_t):=\int_{\gamma_t}\rho_{(y,\theta)}(\cdot,t)\,d\mu_t
	\end{align*}
is nonincreasing in $t$ (Huisken monotonicity in the interior case, and Edelen's reflected monotonicity in the free-boundary case). In particular,
	\begin{align*}
	\mathcal F_{(y,\theta)}(\gamma_t)\le \mathcal F_{(y,\theta)}(\gamma_0)\le \operatorname{Ent}(\gamma_0)
\quad\text{ for all }t<\theta.
	\end{align*}
Let $\gamma^{(j)}$ be any blow-up sequence at $(x_0,t_0)$ converging to a tangent flow $\gamma^\infty$.
Fix $(y,\theta)$ in the rescaled variables and a fixed time $t<\theta$.
By smooth (hence measure) convergence on compact subsets and Gaussian decay, we have
	\begin{align*}
	\mathcal F_{(y,\theta)}(\gamma^\infty_t)
=\lim_{j\to\infty}\mathcal F_{(y,\theta)}(\gamma^{(j)}_t).
	\end{align*}
Undoing the scaling identifies $\mathcal F_{(y,\theta)}(\gamma^{(j)}_t)$ with a Gaussian functional of the original flow at a time $t_0+t/\lambda_j^2\to t_0$, hence each term is bounded by $\operatorname{Ent}(\gamma_0)$ by the previous inequality. Therefore
	\begin{align*}
	\mathcal F_{(y,\theta)}(\gamma^\infty_t)\le \operatorname{Ent}(\gamma_0)
\quad\text{ for all }(y,\theta),\ t<\theta.
	\end{align*}
Taking the supremum over $(y,\theta)$ in the definition of entropy gives $\operatorname{Ent}(\Sigma)\le \operatorname{Ent}(\gamma_0)$ for any time-slice $\Sigma$ of the tangent flow.
\end{proof}

\begin{theorem}\label{thm_tau1_planar}
If $\tau_1\equiv 0$ on an interval, then $\gamma$ is planar on that interval. In particular, if $\tau_1\equiv 0$ along $\gamma$ then $\gamma$ is contained in an affine $2$-plane.
\end{theorem}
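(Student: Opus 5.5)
We will prove the statement with the Frenet--Serret frame. We work on a subinterval $J$ where $\kappa>0$, since only there are the Frenet frame $(T,N,B_1,\dots)$ and $\tau_1$ defined as in Theorem~\ref{thm_interior-evolution}. The equations we need are
\begin{align*}
\partial_s T=\kappa N,\qquad \partial_s N=-\kappa T+\tau_1 B_1 .
\end{align*}
If $\tau_1\equiv 0$ on $J$, then $\partial_s T$ and $\partial_s N$ both lie in $\mathrm{span}\{T,N\}$. Hence the $2$-plane field $P(s):=\mathrm{span}\{T(s),N(s)\}$ is constant along $J$. Explicitly, $\partial_s(T\wedge N)=T\wedge(-\kappa T+\tau_1 B_1)=0$, so $T\wedge N\equiv T(s_0)\wedge N(s_0)$.

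Fix $s_0\in J$ and let $P_0:=P(s_0)$. For any unit vector $e\perp P_0$ we have $\langle T,e\rangle\equiv 0$. Therefore
\begin{align*}
\partial_s\langle \gamma(s)-\gamma(s_0),e\rangle=\langle T,e\rangle=0,
\end{align*}
and since this quantity vanishes at $s_0$, it vanishes identically on $J$. So $\gamma(J)\subset \gamma(s_0)+P_0$, an affine $2$-plane, which proves the first claim.

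The second claim ("along $\gamma$") requires handling the zero set $Z=\{\kappa=0\}$, and this is the main obstacle. On each component of $\{\kappa>0\}$ the curve lies in some affine plane, and we must show these planes coincide.

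\textbf{Interior of $Z$.} On any open interval contained in $Z$ we have $\partial_s T=0$, so the curve is a straight segment there. A segment is contained in every affine plane that contains it, so it imposes no constraint of its own.

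\textbf{Gluing across $Z$.} The difficulty is gluing planes across isolated zeros or across straight segments. Our plan is to adopt the convention under which "$\tau_1\equiv0$ along $\gamma$" is meaningful. This convention is either $\kappa>0$ everywhere, or that the Frenet frame extends smoothly (as it does for the flow when $t>0$, where zeros of $\kappa$ are isolated by Sturmian/analyticity arguments). Under it, $N$ and hence $P=\mathrm{span}\{T,N\}$ extends continuously across $Z$. Since $P$ is locally constant off $Z$ and continuous, it is globally constant. The segment argument above then places the whole curve in one affine plane.

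If one prefers to avoid assuming a smooth frame, we would instead use that $\tau_1\equiv 0$ is equivalent to $\kappa\tau_1=\langle\partial_s(\kappa N),B_1\rangle=0$. Equivalently, $\partial_s^2T\in\mathrm{span}\{T,\partial_sT\}$. We would then argue via the vanishing of $T\wedge \partial_sT\wedge\partial_s^2T$ together with smoothness. We expect this variant to need a genericity assumption, because smooth non-planar curves that are piecewise planar with flat junctions exist. We would therefore state the extension hypothesis explicitly.
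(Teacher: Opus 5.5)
Your argument on each subinterval where $\kappa>0$ is the paper's. From $\tau_1\equiv0$ you get $\partial_s(T\wedge N)=0$, so $\mathrm{span}\{T,N\}$ is a fixed plane $P_0$; since $T\in P_0$, the curve lies in $\gamma(s_0)+P_0$. Your pairing with $e\perp P_0$ just makes that last step explicit.

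The real difference is the global claim. The paper handles $\{\kappa=0\}$ in one sentence: it asserts the curve is locally a straight line wherever $\kappa$ vanishes, then covers $I$ by intervals on which $\kappa>0$ or $\kappa\equiv0$. That does not work, for three reasons:
\begin{enumerate}
\item An isolated zero of $\kappa$ does not make the curve locally straight.
\item $\{\kappa=0\}$ need not split into finitely many points and intervals.
\item Even when it does, nothing forces the planes on the two sides of a straight segment to agree.
\end{enumerate}
Your flat-junction example (in the $xy$-plane for $s\le-1$, straight on $[-1,1]$, in the $xz$-plane for $s\ge1$) has $\tau_1\equiv0$ wherever $\tau_1$ is defined, yet it is not planar. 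So the ``in particular'' sentence genuinely needs an extra hypothesis, and you are right to impose one.

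Your gluing step needs tightening, though. The inference ``$P$ is locally constant off $Z$ and continuous, hence constant'' is fine when $Z$ is discrete, but not in general. On a segment contained in $Z$, a continuous (even smooth) extension of $N$ can rotate about the constant vector $T$, and then $P$ rotates with it. This is exactly how your own counterexample acquires a smooth frame, at the price of $\tau_1\neq0$ on the segment.

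There are two clean fixes:
\begin{enumerate}
\item Use your convention in full. If the frame is smooth and satisfies $\partial_sN=-\kappa T+\tau_1B_1$ with $\tau_1\equiv0$ on all of $J$, including on $Z$, then $\partial_s(T\wedge N)=\tau_1\,T\wedge B_1=0$ everywhere, and no gluing is needed.
\item Use analyticity where the result is actually applied. Self-shrinkers and translators solve analytic ODEs and are real-analytic. Apply the first part on one component of $\{\kappa>0\}$; then, for each $e\perp P_0$, the function $\langle\gamma-\gamma(s_0),e\rangle$ vanishes on an open subinterval and hence identically. This bypasses the frame altogether.
\end{enumerate}
A minor point: to justify isolated zeros of $\vec\kappa$ in codimension $\ge2$, cite analyticity, not Sturmian arguments, which are a scalar tool.
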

\begin{proof}
Let $\gamma:I\to\mathbb R^{n+1}$ be a unit-speed $C^3$ curve and fix an open interval $I_0\subset I$ on which $\kappa>0$, so that the Frenet frame $(T,N,B_1,\dots,B_{n-2})$ is defined on $I_0$.
Assume $\tau_1\equiv 0$ on $I_0$. Then the first two Frenet equations reduce to
	\begin{align}\label{eq_tau1_frenet}
	\partial_s T=\kappa N,\qquad \partial_s N=-\kappa T.
	\end{align}
Consider the $2$-vector field
	\begin{align*}
	\mathrm{II}(s):=T(s)\wedge N(s)\in \Lambda^2(\mathbb R^{n+1}),\qquad s\in I_0.
	\end{align*}
Differentiating and using \eqref{eq_tau1_frenet} gives
	\begin{align*}
	\partial_s \mathrm{II}
&=(\partial_s T)\wedge N+T\wedge(\partial_s N)
=\kappa N\wedge N+T\wedge(-\kappa T)=0.
	\end{align*}
Hence $\mathrm{II}(s)$ is constant on $I_0$, and therefore the oriented $2$-plane
	\begin{align*}
	P:=\operatorname{span}\{T(s),N(s)\}
	\end{align*}
is independent of $s\in I_0$. Since $\partial_s \gamma=T$, we have $T(s)\in P$ for all $s\in I_0$, so $\gamma(I_0)$ is contained in the affine plane $\gamma(s_*)+P$ for any fixed $s_*\in I_0$.

If $\kappa$ vanishes somewhere, the curve is locally a straight line there, hence still contained in an affine $2$-plane; covering $I$ by intervals where either $\kappa>0$ or $\kappa\equiv 0$ yields the claim.
Finally, once $\gamma$ is planar, all higher torsions $\tau_i$ vanish identically.
\end{proof}

By the classification of self-shrinking curves for CSF due to Abresch and Langer \cite{AL86}, any closed planar self-shrinker is either a round circle or an Abresch-Langer curve. By Theorem \ref{thm_FBCSF-classification}, we have the following.
\begin{lemma}[Low-entropy planar shrinkers]\label{lemma4}
Let $\Sigma \subset \mathbb{R}^{n+1}$ be a smooth self-shrinking solution of curve shortening flow, i.e.
	\begin{align*}
\kappa N+\frac{x^{\perp}}{2}=0.
	\end{align*}
Assume $\operatorname{Ent}(\Sigma)<2$. Then either
\begin{itemize}
\item $\Sigma$ is a multiplicity-one straight line, or
\item $\Sigma$ is a multiplicity-one round circle (of radius $\sqrt{2}$), lying in some affine $2$-plane.
\end{itemize}
In particular, if $\Sigma$ arises as a tangent flow at a singular spacetime point, from Lemma \ref{lemma3} option (1) is impossible, so $\Sigma$ must be the round circle.
\end{lemma}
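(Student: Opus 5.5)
The plan is to reduce to the planar classification in three steps: planarity of $\Sigma$, the Abresch--Langer classification, and an entropy count to exclude everything except the line and the circle.

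\emph{Step 1: planarity.} Write the shrinker equation as $\kappa N=-\tfrac12 x^\perp$ and differentiate in $s$ along an interval where $\kappa>0$. Since $\partial_s x=T$, we have $\partial_s(x^\perp)=\partial_s\bigl(x-\langle x,T\rangle T\bigr)=-\kappa\langle x,N\rangle T-\langle x,T\rangle\kappa N$. The right-hand side therefore lies in $\operatorname{span}\{T,N\}$. On the other hand, by the Frenet equations $\partial_s(\kappa N)=\kappa_s N-\kappa^2T+\kappa\tau_1B_1$. Comparing $B_1$ components gives $\kappa\tau_1=0$, so $\tau_1\equiv0$ wherever $\kappa>0$. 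By Theorem~\ref{thm_tau1_planar}, each such arc lies in an affine $2$-plane. That plane is $x+\operatorname{span}\{T,N\}$, and it contains the origin because $x^\perp=-2\kappa N\in\operatorname{span}\{N\}$.

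Where $\kappa$ vanishes on an open set, $x^\perp=0$ there, so that piece is a segment of a line through the origin. A shrinker is real-analytic as a solution of an elliptic ODE, so either $\kappa\equiv0$ and $\Sigma$ is a line through $0$, or the zeros of $\kappa$ are isolated. In the latter case the planes on adjacent arcs agree by continuity of $T\wedge N$, or alternatively by uniqueness for the second-order ODE with given $(x,T)$ at a point. Hence $\Sigma$ lies in one $2$-plane through the origin.

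\emph{Step 2: classification.} Now $\Sigma$ is a complete planar shrinker. The Abresch--Langer classification \cite{AL86} says that the connected complete immersed solutions are exactly three families: the lines through the origin, the circle of radius $\sqrt2$, and the Abresch--Langer curves $\Gamma_{p,q}$, which are closed and have rotation index $p\ge2$.

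\emph{Step 3: entropy exclusion.} This is the main obstacle: one must show that $\operatorname{Ent}(\Gamma_{p,q})\ge2$. My first attempt uses the Gaussian density at the origin, $F(\Gamma)=(4\pi)^{-1/2}\int_\Gamma e^{-|x|^2/4}\,d\mathcal H^1$. Because $\Gamma_{p,q}$ winds $p\ge2$ times around the origin, every ray from $0$ meets it at least $p$ times. Projecting radially and comparing with the circle suggests that the total Gaussian weight is at least $p$ times that of a line. I expect this step to need care, since the weights depend on $|x|$. If that argument fails, I will instead cite the known lower bound $\lambda(\Gamma_{p,q})\ge2$, for instance from the estimates of Colding--Minicozzi-type entropy for immersed nonembedded shrinkers. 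A simpler alternative is available because the curves are non-embedded: take a self-intersection point $y$. Gaussian densities centred at $y$ at very small scales, where the curve looks like two transverse lines, approach $2$, and this forces $\operatorname{Ent}\ge2$.

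Multiplicity is handled the same way: a line or circle of multiplicity $k\ge2$ has entropy $k\ge2$, which is excluded. If $\Sigma$ were disconnected, its entropy would be at least the sum of the entropies of two components, each of which is at least $1$, so at least $2$, which is also excluded.

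For the final claim, Lemma~\ref{lemma3} rules out the line when $\Sigma$ is a tangent flow at a singular point. The entropy bound passes to $\Sigma$ by Lemma~\ref{lem_entropy_to_tangent}, so $\Sigma$ is the round circle.
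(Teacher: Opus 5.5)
Your proposal is correct and follows the route the paper relies on: planarity, then the Abresch--Langer classification, then the entropy bound to discard everything except the line and the circle, with Lemma~\ref{lemma3} excluding the line. It also supplies what the paper leaves unproved, namely the derivation of $\tau_1\equiv 0$ from the shrinker equation (needed before Theorem~\ref{thm_tau1_planar} can be invoked) and the self-intersection density argument giving $\operatorname{Ent}(\Gamma_{p,q})\ge 2$, where non-embeddedness follows from rotation index $p\ge 2$ by Hopf's Umlaufsatz.

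Two small repairs are needed. First, the complete planar shrinkers also include bounded non-closed solutions that densely fill an annulus; you exclude them because infinite length in a bounded region makes the entropy infinite. Second, entropy is only subadditive, so in the disconnected case argue instead with $\operatorname{Ent}(\Sigma)\ge F_{0,1}(\Sigma)=\sum_i F_{0,1}(\Sigma_i)$, where $F_{0,1}(\Sigma)=(4\pi)^{-1/2}\int_\Sigma e^{-|x|^2/4}\,d\mathcal H^1$ equals $1$ on a line through the origin and $\sqrt{2\pi/e}>1$ on the circle.
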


After flattening the boundary and performing the parabolic blow-up at $(x_0,t_0)$, Lemma \ref{lemma1} yields a free-boundary self-shrinker in a half-plane. Reflecting across the limiting boundary line produces an interior self-shrinker $\Sigma\subset\mathbb R^{n+1}$.

By entropy inheritance under blow-up (Lemma \ref{lem_entropy_to_tangent}), the reflected shrinker satisfies
	\begin{align*}
	\operatorname{Ent}(\Sigma)\le \operatorname{Ent}(\gamma_0)<2.
	\end{align*}
By Lemma \ref{lemma3}, $\Sigma$ is not a multiplicity-one line. In particular, $\Sigma$ has no non-compact ends. Indeed, any complete non-compact self-shrinking curve has a blow-down at spatial infinity which is a multiplicity-one line, and this forces $\operatorname{Ent}(\Sigma)\ge 2$, contradicting $\operatorname{Ent}(\Sigma)<2$. Hence $\Sigma$ is compact and from Theorem \ref{thm_tau1_planar}, $\Sigma$ is a closed planar self-shrinker.

By Lemma \ref{lemma4} (low-entropy planar shrinkers), the only non-flat closed self-shrinker with $\operatorname{Ent}<2$ is the round circle. Thus $\Sigma$, is a round circle. Reflecting back across the boundary line, the original free-boundary tangent flow in the half-plane is the corresponding half-circle meeting the boundary line orthogonally.

\section{Type II Blowups and Translators}

\subsection{Planarity}
\begin{theorem}[cf. \cite{Altschuler91}]For a solution $\gamma$ to the curve shortening flow with free boundary, we have
	\begin{align*}
\frac{d}{d t} \int_\gamma|\kappa| d s \leq \left.\frac{\partial |\kappa|}{\partial s}\right|_{\partial \gamma}-\int_\gamma \tau^2|\kappa| d s.
\end{align*}
\end{theorem}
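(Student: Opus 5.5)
Following Altschuler's interior computation, I will differentiate the total absolute curvature directly. The only new ingredient is to keep the boundary term produced by integrating by parts rather than discarding it.

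Under curve shortening flow the arclength element evolves by $\partial_t(ds)=-\kappa^2\,ds$. This follows from $\partial_t|\partial_x\gamma| = \langle T,\partial_x(\kappa N)\rangle = -\kappa^2|\partial_x\gamma|$. Working first where $\kappa>0$, so that $|\kappa|=\kappa$ is smooth, I use the evolution \eqref{eqn_evol-kappa} from Lemma \ref{lem_evol-struct}(i) to get
\begin{align*}
\frac{d}{dt}\int_\gamma \kappa\,ds=\int_\gamma\bigl(\partial_s^2\kappa+\kappa^3-\tau_1^2\kappa-\kappa^3\bigr)\,ds
=\int_\gamma \partial_s^2\kappa\,ds-\int_\gamma\tau_1^2\kappa\,ds .
\end{align*}
The cubic terms cancel exactly, and the fundamental theorem of calculus turns the first integral into $\partial_s\kappa$ evaluated on $\partial\gamma$ with signs. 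I read the term $\left.\partial_s|\kappa|\right|_{\partial\gamma}$ in the statement as this signed boundary contribution $[\partial_s|\kappa|]_a^b$, equivalently $\sum_{\partial\gamma}\partial_\mu|\kappa|$ with $\mu$ the outward conormal. Under this reading the inequality holds with equality whenever $\kappa>0$ everywhere.

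The main obstacle is the set where $\kappa$ vanishes, since $|\kappa|$ is only Lipschitz there and $N$, $\tau_1$ are undefined. I will handle it in one of two ways.

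\textbf{Smoothing.} Replace $|\kappa|$ by $\sqrt{\kappa^2+\e^2}$, using the evolution \eqref{eqn_evol-kappa2} for $\kappa^2$ written in terms of the curvature vector $\vec\kappa$ so that no frame is needed. The interior part gives
\begin{align*}
(\partial_t-\partial_s^2)\sqrt{|\vec\kappa|^2+\e^2}\le \frac{|\vec\kappa|^4-|\vec\kappa|^2\tau_1^2}{\sqrt{|\vec\kappa|^2+\e^2}}+(\text{a nonpositive gradient term}),
\end{align*}
where the gradient term is nonpositive by Kato's inequality $|\partial_s|\vec\kappa||\le|\nabla^\perp_s\vec\kappa|$. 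The term $-\kappa^2\sqrt{\cdot}$ from $\partial_t ds$ then nearly cancels the $\kappa^4$ term, leaving an error bounded by $C\e\int\kappa^2\,ds\to0$.

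\textbf{Decomposition.} Alternatively, by Sturm-type arguments (Angenent) the zero set of $\kappa$ is discrete for $t>0$, except when the curve is a straight segment. I then split $\gamma$ into intervals with $\kappa\neq0$. At interior zeros the one-sided derivatives of $|\kappa|$ point outward, so the boundary terms of each interval combine with a favourable sign; this is exactly why the result is an inequality rather than an identity.

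Letting $\e\to0$ by dominated convergence, or summing over the intervals, then gives
\begin{align*}
\frac{d}{dt}\int_\gamma|\kappa|\,ds\le \left.\frac{\partial|\kappa|}{\partial s}\right|_{\partial\gamma}-\int_\gamma\tau_1^2|\kappa|\,ds .
\end{align*}
The time derivative is interpreted in the upper Dini sense if necessary.

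Finally, I expect the boundary term to be controllable by Lemma \ref{lem_endpoint-control}: from $|\partial_\mu\kappa|\le K|\kappa|$ it is at most $K\sum_{\partial\gamma}|\kappa|$. This estimate is not needed for the inequality itself, but it is how the statement will be used in the planarity argument.
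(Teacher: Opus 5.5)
Your smoothing route is the paper's proof. The paper regularises with $K_\e=\sqrt{\kappa^2+\e}$, keeps $[\partial_sK_\e]_{\partial\gamma}$ from integrating $\partial_s^2K_\e$, drops the nonpositive remainders $-\e(\partial_s\kappa)^2/K_\e^3$ and $\kappa^4/K_\e-K_\e\kappa^2=-\e\kappa^2/K_\e$, and lets $\e\to0$. One small correction: your leftover gradient term is nonpositive simply because $|\vec\kappa|\le K$, not by Kato's inequality, since the torsion term you retain is exactly the Kato defect $|\nabla_s^\perp\vec\kappa|^2-(\partial_s|\vec\kappa|)^2=\kappa^2\tau_1^2$ (applying Kato to the whole gradient term would discard it).
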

\begin{proof}
From
	\begin{align*}
\frac{\partial \kappa}{\partial t}=\frac{\partial^2 \kappa}{\partial s^2}+\kappa^3-\tau^2 \kappa,
	\end{align*}
we may derive
	\begin{align*}
\frac{\partial}{\partial t}\left(\kappa^2\right)=\frac{\partial^2}{\partial s^2}\left(\kappa^2\right)-2\left(\frac{\partial \kappa}{\partial s}\right)^2+2 \kappa^4-2 \tau^2 \kappa^2.
	\end{align*}
A technical difficulty of this theorem is that for space curves $\kappa^2=\left|\frac{\partial T}{\partial s}\right|^2$ is a more natural function to study than $\kappa$. In the special case of a planar curve, by choosing a consistent normal field one may define an ``inside`` and an ``outside``. Then $\kappa>0$ or $\kappa<0$ makes sense.
Hence, following a suggestion of R. Hamilton, we will make use of the function $\sqrt{\kappa^2+\e}$. For simplicity, denote $K_\e=\sqrt{\kappa^2+\e}$ where $\e>0$. The derived equation for this quantity is
	\begin{align*}
\frac{\partial K_\e}{\partial t}=\frac{\partial^2 K_\e}{\partial s^2}+\frac{1}{K_\e^3} \kappa^2\left(\frac{\partial \kappa}{\partial s}\right)^2+\frac{1}{K_\e}\left(-\left(\frac{\partial \kappa}{\partial s}\right)^2+\kappa^4-\tau^2 \kappa^2\right).
	\end{align*}
Since $\kappa<K_\e$ for all $\e>0$, we have
	\begin{align*}
\frac{d}{d t} \int_\gamma K_\e d s&= \int_\gamma\left[\frac{\partial^2 K_\e}{\partial s^2}+\frac{1}{K_\e^3} \kappa^2\left(\frac{\partial \kappa}{\partial s}\right)^2\right] d s +\int_\gamma\left[\frac{1}{K_\e}\left(-\left(\frac{\partial \kappa}{\partial s}\right)^2+\kappa^4-\tau^2 \kappa^2\right)-K_\e \kappa^2\right] d s \\
	&=\left.\frac{\partial K_{\e}}{\partial s}\right|_{\partial \gamma} + \int_\gamma\left[\frac{1}{K_\e^3} \kappa^2\left(\frac{\partial \kappa}{\partial s}\right)^2+\frac{1}{K_\e}\left(-\left(\frac{\partial \kappa}{\partial s}\right)^2+\kappa^4-\tau^2 \kappa^2\right)-K_\e \kappa^2\right] d s \\
	&\leq \left.\frac{\partial K_{\e}}{\partial s}\right|_{\partial \gamma} -\int_\gamma \frac{1}{K_\e} \tau^2 \kappa^2 d s,
	\end{align*}
where
	\begin{align*}
\frac{\partial K_{\e}}{\partial s}\Big|_{\partial \gamma}:=\frac{\partial K_{\e}}{\partial s}(\text{ endpoint } b)-\frac{\partial K_{\e}}{\partial s}(\text{ endpoint } a).
	\end{align*}
The result follows from letting $\e \rightarrow 0$.

\end{proof}

\begin{theorem}[cf. \cite{Altschuler91}]\label{dissk}
Let $\left\{\gamma_t\right\}_{t \in[0, \omega)}$ be a smooth curve shortening flow in $\mathbb{R}^{n+1}$, with free boundary $\partial\Omega$, meeting $\partial\Omega$ orthogonally for all $t$. Let $\left\{\left(p_n, t_n\right)\right\}$ be an essential blow-up sequence. Then there exist constants $d_0, d_1, d_2>0$, depending only on $\rho$ (and the fixed geometry of the support curve on the boundary), such that the following hold.
	\begin{enumerate}
\item The temporal loss of $\kappa\left(p_n, \cdot\right)$ is bounded from below
	\begin{align*}
\left|\kappa\left(p_n, t\right)\right| \geq \frac{1}{\sqrt{2}}\left|\kappa\left(p_n, t_n\right)\right|, \quad \text{ for} \ t \in\left[t_n, t_n+\frac{d_1}{M_{t_n}}\right].
	\end{align*}
\item The spatial loss of $\kappa(\cdot, t)$ is bounded from below
	\begin{align*}
|\kappa(p, t)| \geq \frac{1}{\sqrt{2}}\left|\kappa\left(p_n, t\right)\right|, \quad \text{ for} \ \operatorname{dist}_t\left\{p, p_n\right\} \leq \sqrt{\frac{d_2}{M_{t_n}}}, \ \ t \in\left[t_n, t_n+\frac{d_1}{M_{t_n}}\right].
	\end{align*}
\item Hence,
	\begin{align*}
|\kappa(p, t)| \geq \frac{1}{2}\left|\kappa\left(p_n, t_n\right)\right|, \quad \text{ for} \ (p, t) \in \mathbf{N}\left(p_n, t_n, d_0\right),
	\end{align*}
where for $d \in \mathbb{R}_+$ and $\left(p_n, t_n\right) \in S^1 \times[0, \omega)$,
	\begin{align*}
	\mathbf{N}\left(p_n, t_n, d\right)=\left\{(p, t) \in S^1 \times\right.&{\left[t_n, \omega\right) \mid \operatorname{dist}_{t_n}\left\{p_n, p\right\} } \left.\leq \sqrt{\frac{d}{M_{t_n}}},\left|t_n-t\right| \leq \frac{d}{M_{t_n}}\right\}.
	\end{align*}
	\end{enumerate}
\end{theorem}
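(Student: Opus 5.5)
The plan is to derive all three statements from the dilation-invariant derivative estimates \eqref{eqn_FBCSF-derivative}, which hold up to the free boundary. By Theorem \ref{thm_Stahl-localMP} and Lemma \ref{lem_local-kappa2}, the constants there depend only on $K$, $\rho$ and $m$. An essential blow-up sequence means, as in Altschuler, that $\kappa^2(p_n,t_n)$ is comparable to $M_{t_n}=\sup_{\gamma_{t_n}}\kappa^2$. Concretely, $\kappa^2(p_n,t_n)\ge \delta M_{t_n}$ for a fixed $\delta>0$; after normalising we may take $\kappa^2(p_n,t_n)\ge \tfrac12 M_{t_n}$ up to constants.

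\emph{Step 1: doubling-time bound.} By Lemma \ref{lem_evol-struct}(i) we have $(\partial_t-\partial_s^2)\kappa^2\le 2(\kappa^2)^2$. By Lemma \ref{lem_endpoint-control}, $\partial_\mu\kappa^2\le 2K\kappa^2$ at the endpoints. A maximum-principle comparison, localised by the Stahl cut-off or applied globally with the factor $e^{-Ct}$ to absorb the oblique boundary term, then gives $\sup_{\gamma_t}\kappa^2\le 2M_{t_n}$ for $t\in[t_n,t_n+c/M_{t_n}]$. On this interval \eqref{eqn_FBCSF-derivative} with $m=2,3$ holds, and we record $|\partial_s\kappa|\le C M_{t_n}$ and $|\partial_s^2\kappa|\le C M_{t_n}^{3/2}(M_{t_n}(t-t_n))^{-1/2}$. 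At the endpoints we also have $|\tau_1|\le K$ and $|\partial_s\kappa|\le K|\kappa|$.

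\emph{Step 2: temporal estimate (1).} Integrate \eqref{eqn_evol-kappa} in $t$ at the fixed parameter $p_n$. This uses $|\partial_t\kappa|\le |\partial_s^2\kappa|+\kappa^3+\tau_1^2\kappa$. The $\partial_s^2\kappa$ term is handled via the integrable singularity $(t-t_n)^{-1/2}$ in Step 1. The torsion term needs a bound $|\tau_1\kappa|\lesssim M^{1}$, which follows from $|\partial_s^2T|^2=\kappa_s^2+\kappa^2\tau_1^2+\kappa^4$ and the $m=2$ estimate. This yields $|\kappa(p_n,t)-\kappa(p_n,t_n)|\le C\sqrt{M_{t_n}}\,(M_{t_n}(t-t_n))^{1/2}$. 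Choosing $d_1$ small, the right side is at most $(1-2^{-1/2})|\kappa(p_n,t_n)|$, using essentiality to compare $\sqrt{M_{t_n}}$ with $|\kappa(p_n,t_n)|$.

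\emph{Step 3: spatial estimate (2).} Use $|\partial_s\kappa|\le |\partial_s^2 T|\le C M_{t_n}$ from \eqref{eqn_FBCSF-derivative} with $m=2$, valid up to the boundary by Lemma \ref{lem_boundary-reduction}. Then $|\kappa(p,t)-\kappa(p_n,t)|\le CM_{t_n}\operatorname{dist}_t(p,p_n)\le C\sqrt{d_2}\sqrt{M_{t_n}}$, and choosing $d_2$ small gives (2). Arcs reaching an endpoint cause no trouble, since the estimate is up to the boundary and the arc stays inside $I$. Statement (3) follows by combining (1) and (2), with $d_0=\min(d_1,d_2)$ adjusted so that $\operatorname{dist}_{t_n}$ and $\operatorname{dist}_t$ are comparable. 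Comparability holds because $\partial_t ds=-\kappa^2ds$, so lengths change by a factor at most $e^{2M d}$ on the time interval.

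The main obstacle I expect is Step 2 near $t=t_n$, where \eqref{eqn_FBCSF-derivative} degenerates. The plan is to exploit that the singularity $(t-t_n)^{-1/2}$ is integrable. If that is insufficient, I will instead use the bound at time $t_n-c/(2M)$, which requires controlling $M_{t_n-c/(2M)}$ by $M_{t_n}$ via the same doubling argument run backward, which is not automatic. A fallback is to follow Altschuler and estimate $\partial_t\kappa^2$ through $\kappa\partial_s^2\kappa$ integrated against a spatial average over a small arc.
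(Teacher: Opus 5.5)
There is a genuine gap, and it is the degeneracy you flag at the end. Step 1 misreads \eqref{eqn_FBCSF-derivative}. For $m=2$ it gives $|\partial_s\kappa|\le|\partial_s^2T|\le C\,M_{t_n}\bigl(M_{t_n}(t-t_n)\bigr)^{-1/2}$, not $|\partial_s\kappa|\le CM_{t_n}$ on the whole interval. For $m=3$ it gives $|\partial_s^2\kappa|\lesssim M_{t_n}^{3/2}\bigl(M_{t_n}(t-t_n)\bigr)^{-1}$, not exponent $-1/2$. Consequently, in Step 2 the $\partial_s^2\kappa$ term has a non-integrable $(t-t_n)^{-1}$ singularity. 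So does the torsion term: $\tau_1^2\kappa=(\kappa\tau_1)^2/\kappa$, and the $m=2$ estimate only gives $(\kappa\tau_1)^2\le CM_{t_n}/(t-t_n)$. Integrating $\partial_t\kappa$ from $t_n$ therefore yields no bound. In Step 3 you have no gradient bound at all at $t=t_n$. So (2) does not follow for $t-t_n\lesssim d_2/M_{t_n}$, and neither does (3), whose neighbourhood $\mathbf N(p_n,t_n,d_0)$ contains the slice $t=t_n$.

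Sharper bookkeeping will not fix this: no argument that uses only data from time $t_n$ onward can give (1). Fix $d_1$, and suppose the maximal curvature $\sqrt{M_{t_n}}$ of $\gamma_{t_n}$ sits on a bump of arclength width $\varepsilon\ll\sqrt{d_1/M_{t_n}}$, hence with small total turning. The bump diffuses on the time scale $\varepsilon^2$. At time $t_n+d_1/M_{t_n}$ the curvature at $p_n$ is of order $M_{t_n}\varepsilon/\sqrt{d_1}\ll\sqrt{M_{t_n}}$, even though all your hypotheses at $t_n$ hold.

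What is needed is derivative control \emph{at} $t_n$ at scale $M_{t_n}$. That means starting the smoothing estimates at an earlier time $\tilde t_n$, as the paper does in Step 1 of the proof of Theorem \ref{thm_tau_over_kappa_to_zero} with $t_n=\tilde t_n+\frac{1}{16M_{\tilde t_n}}$. You must also prove $M_{\tilde t_n}\le CM_{t_n}$. The doubling estimate only gives $M_{t_n}\le 2M_{\tilde t_n}$, which is the wrong direction, and it cannot be run backward. The reverse inequality must come from the notion of essential blow-up sequence (e.g.\ comparing $\kappa^2(p_n,t_n)$ with $\max_{\gamma\times[0,t_n]}\kappa^2$) or from a separate argument. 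You correctly note this is ``not automatic'' but leave it open, and that is the missing step.

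With that input you get, uniformly on $[t_n,t_n+d/M_{t_n}]$:
\begin{itemize}
\item $|\partial_s\kappa|\le CM_{t_n}$;
\item $|\partial_s^2\kappa|+\tau_1^2|\kappa|\le CM_{t_n}^{3/2}$, as long as $|\kappa(p_n,\cdot)|\gtrsim\sqrt{M_{t_n}}$ (by continuity).
\end{itemize}
Your Steps 2--3 then become the argument of \cite{Altschuler91} that the paper simply invokes; the overall route is the same as the paper's.

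A smaller point: in Step 1, a factor $e^{-Ct}$ does not absorb the oblique condition $\partial_\mu\kappa^2\le 2K\kappa^2$. You need a spatial weight, e.g.\ $e^{\beta f}\kappa^2$ with $\partial_\mu f=-1$ on $\partial\Omega$ and $\beta>2K$, or Stahl's cut-off.
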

\begin{proof}
Recall that \eqref{eqn_FBCSF-derivative} holds up to the boundary, so we do not need to separate the interior and boundary cases. The proof follows from \cite{Altschuler91}.
\end{proof}

\begin{theorem}[Boundary essential blow-up neighbourhood]
Let $\Omega \subset \mathbb{R}^{n+1}$ be a smooth domain with boundary $\partial \Omega$, and assume $\left\|\mathrm{II}_{\partial\Omega}\right\|_{L^{\infty}} \leq K$. Let $\left\{\gamma_t\right\}_{t \in[0, \omega)}$ be a smooth curve-shortening flow in $\bar{\Omega}$. Fix a non-collapsing parameter $\rho>0$ and let $\left\{\left(p_n, t_n\right)\right\}$ be an essential blow-up sequence with
	\begin{align}\label{essentialseq}
p_n \in \partial \gamma_{t_n}, \quad M_{t_n}:=\max_{\gamma_{t_n}}\left|\kappa\left(\cdot, t_n\right)\right|=\left|\kappa\left(p_n, t_n\right)\right| \rightarrow \infty.
	\end{align}
Then there exist constants $d_0, d_1, d_2>0$, depending only on $\rho$ and $K$, such that
	\begin{align*}
|\kappa(p, t)| \geq \frac{1}{2}\left|\kappa\left(p_n, t_n\right)\right|,
	\end{align*}
whenever
	\begin{align*}
\left|t-t_n\right| \leq \frac{d_0}{M_{t_n}}, \quad \operatorname{dist}_{t_n}\left(p, p_n\right) \leq \sqrt{\frac{d_0}{M_{t_n}}}
	\end{align*}
and $p$ lies on the connected component of $\gamma_{t_n}$ starting at the endpoint $p_n$.
\end{theorem}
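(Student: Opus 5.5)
The plan is to reduce the statement to Theorem \ref{dissk}, applied in the same normalisation and on the same window $|t-t_n|\le d_0/M_{t_n}$, $\operatorname{dist}_{t_n}(p,p_n)\le\sqrt{d_0/M_{t_n}}$, with $M_{t_n}$ as in \eqref{essentialseq}. The only new feature is that the base point $p_n$ is an endpoint. I would therefore check that each ingredient of Altschuler's argument survives when the neighbourhood is one-sided, i.e.\ restricted to the connected component of $\gamma_{t_n}$ emanating from $p_n$. The key input is that \eqref{eqn_FBCSF-derivative} holds up to $\partial\gamma_t$, via Theorem \ref{thm_Stahl-localMP} and Lemma \ref{lem_endpoint-control}, with constants depending only on $K$ and $\rho$. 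This gives uniform bounds on $\partial_s\kappa$ and $\partial_s^2\kappa$ on the window, in the scale in which Theorem \ref{dissk} is stated.

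\emph{Temporal step.} Fix the parameter $p_n\in\partial I$; it remains an endpoint for all $t$. At a fixed parameter, \eqref{eqn_evol-kappa} reads $\partial_t\kappa=\partial_s^2\kappa+\kappa^3-\tau_1^2\kappa$. For the zeroth-order term I would use the endpoint identity $|\tau_1|\le K$ from Lemma \ref{lem_endpoint-control}, so that $\tau_1^2\kappa$ is harmless. The term $\kappa^3$ only helps, since it pushes $|\kappa|$ up. The term $\partial_s^2\kappa$ is bounded by \eqref{eqn_FBCSF-derivative}; at the endpoint one can alternatively use the sharper relation \eqref{boundforderivativeofkss}, $|\partial_s^2\kappa|\le C'_K|\partial_s\kappa|+C''_K|\kappa|$, together with \eqref{sffK}, $|\partial_s\kappa|\le K|\kappa|$. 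Substituting these into the equation for $\partial_t|\kappa|$ gives a differential inequality of the form $\partial_t|\kappa|\ge -C_K|\kappa|$ at $p_n$. Integrating it over $[t_n,t_n+d_1/M_{t_n}]$ yields $|\kappa(p_n,t)|\ge e^{-C_Kd_1/M_{t_n}}|\kappa(p_n,t_n)|\ge\tfrac1{\sqrt2}|\kappa(p_n,t_n)|$ once $d_1$ is small, uniformly in $n$. This reproduces item (1) of Theorem \ref{dissk} at an endpoint. For times $t<t_n$ in the window, I would argue backwards in the same way using the maximality $M_{t_n}=|\kappa(p_n,t_n)|$.

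\emph{Spatial step.} For fixed $t$ in the window, I integrate $\partial_s|\kappa|$ along the component starting at $p_n$, which is parametrised by arclength from the endpoint. Using the derivative bound from \eqref{eqn_FBCSF-derivative} over an arc of length at most $\sqrt{d_2/M_{t_n}}$, the loss $\bigl||\kappa(p,t)|-|\kappa(p_n,t)|\bigr|$ is at most a small multiple of $|\kappa(p_n,t_n)|$ for $d_2$ small. The restriction to the component starting at $p_n$ ensures that this integration path is a genuine arc of $\gamma_t$ and never wraps around to the other endpoint. I would also compare $\operatorname{dist}_t$ with $\operatorname{dist}_{t_n}$, since arclength changes by a factor $\exp(\int\kappa^2)$, which is $1+o(1)$ on the window. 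Combining the two steps gives the factor $\tfrac12$, and setting $d_0=\min(d_1,d_2)$ finishes the argument.

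\emph{Main obstacle.} The main obstacle is making the derivative bound hold on the full window, rather than only on the curvature-scale cylinder of Section 4. The constants must be uniform up to the moving endpoint and must depend only on $K$ and $\rho$. My first attempt would be to run the temporal and spatial steps entirely within the normalisation of Theorem \ref{dissk}, where Altschuler's argument is stated on exactly this window. In that setting the only boundary inputs are \eqref{sffK} and \eqref{boundforderivativeofkss}, which control $\partial_s\kappa$ and $\partial_s^2\kappa$ pointwise at $p_n$ by $|\kappa|$ itself. This removes any need for reflection at the endpoint.
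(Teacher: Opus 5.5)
Your architecture matches what the paper intends: persistence in time at the fixed endpoint parameter, then persistence in space along the arc emanating from $p_n$, with \eqref{eqn_FBCSF-derivative} valid up to $\partial\gamma_t$ as the essential boundary input. The paper gives no separate proof and treats this as Theorem \ref{dissk}, i.e.\ Altschuler's argument run with derivative bounds holding up to the boundary. The step that fails is the one you use to get around your ``main obstacle''. The endpoint relation \eqref{boundforderivativeofkss} is false, and so is your inequality $\partial_t|\kappa|\ge -C_K|\kappa|$ at $p_n$. The identity $T=\varepsilon\,\nu_{\partial\Omega}(\gamma)$ holds only at the endpoint. You may differentiate it in $t$, which gives \eqref{sffK} and hence the correct bounds $|\partial_s\kappa|\le K|\kappa|$ and $|\tau_1|\le K$. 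You may not differentiate it in the direction $\partial_\mu$, yet that is what the derivation of \eqref{boundforderivativeofkss} does. A first-order condition of Neumann type constrains only odd-order normal derivatives, so $\partial_s^2\kappa$ at an endpoint is free. Compare $u_t=u_{xx}$ with $u_x(0,t)=0$: there $u_{xxx}(0,t)=0$, but $u_{xx}(0,t)=u_t(0,t)$ is arbitrary.

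Here is a concrete counterexample. Take a flat barrier, which is admissible for every $K$. Let the initial curve be the upper half of the ellipse $(a\cos\theta,b\sin\theta)$; by reflection symmetry it evolves as a free-boundary flow on the $x$-axis. At $t=0$ and at the endpoint $(a,0)$ one has $\tau_1=\partial_s\kappa=0$ and $\kappa=a/b^2$, but $\partial_s^2\kappa=-3a(a^2-b^2)/b^6$. Hence $\partial_t\kappa=a(3b^2-2a^2)/b^6<0$ when $2a^2>3b^2$. Shrinking by a factor $\lambda\to0$, the ratios $|\partial_s^2\kappa|/|\kappa|$ and $|\partial_t\kappa|/|\kappa|$ grow like $\lambda^{-2}$. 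So neither bound can hold with constants depending only on $K$. Your conclusion $e^{-C_Kd_1/M_{t_n}}\to1$, i.e.\ no loss at all on the curvature time scale, was already a warning sign.

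The endpoint must therefore be handled exactly like an interior point, with the scale-invariant bounds $|\partial_s\kappa|\le CM$ and $|\partial_s^2\kappa|\le CM^{3/2}$ on the window.

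\emph{Normalisation.} Here $M=M_{t_n}$ must be read as $\max\kappa^2$, as in Section 4. With $\max|\kappa|$ as literally written in \eqref{essentialseq}, your spatial loss is of order $M^2\sqrt{d_2/M}=\sqrt{d_2}\,M^{3/2}$, which is not a small multiple of $M$.

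\emph{Time shift.} Since \eqref{eqn_FBCSF-derivative} degenerates like $(t-t_0)^{-(m-1)}$ at its base time, you cannot base it at $t_n$. Base it instead at some $\tilde t_n<t_n$ with $t_n-\tilde t_n\approx c/M_{\tilde t_n}$, and use that $M_{\tilde t_n}$ and $M_{t_n}$ are comparable by the doubling-time estimate. This is the same time shift as in Step 1 of Theorem \ref{thm_tau_over_kappa_to_zero}. At $p_n$ it gives $\partial_t\kappa\ge -CM^{3/2}-K^2M^{1/2}$, so the loss over $[t_n,t_n+d_1/M]$ is at most $Cd_1M^{1/2}$. Once $d_1$ is small, this is a small fraction of $|\kappa(p_n,t_n)|$, which equals $\sqrt M$ here and is $\ge\sqrt{\rho M}$ in general; this is where $\rho$ enters.

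\emph{Spatial step.} The spatial step then closes. The arclength distortion is $e^{-O(d)}$, not $1+o(1)$, but that suffices.

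\emph{Backward times.} The backward half $t<t_n$ of the two-sided window is not covered by Theorem \ref{dissk}, which only runs forward in time. Spatial maximality at time $t_n$ does not control earlier times. You additionally need $\sup_{\gamma_t}\kappa^2\lesssim M_{t_n}$ on $[t_n-2d_0/M_{t_n},t_n]$ to have the derivative bounds there, for instance by choosing $t_n$ so that $M_{t_n}$ is a running maximum.
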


\begin{theorem}[\cite{Altschuler91}]\label{thm_disstorsion}
Let $\{\gamma_t\}_{t\in[0,\omega)}$ be a smooth curve shortening flow in $\mathbb R^{n+1}$ with free boundary $\partial\Omega$, meeting $\partial\Omega$ orthogonally for all $t$. Let $\{(p_n,t_n)\}$ be an essential blow-up sequence and assume that there exists $\mu>0$ such that
	\begin{align*}
	\tau_1^2(p_n,t_n)\ge \mu \kappa^2(p_n,t_n)\quad\text{ for all }n.
	\end{align*}
Then there exist constants $d_3,d_4,d_5>0$, depending only on $\mu$ and $\rho$, such that
\begin{enumerate}
\item for all $t\in\left[t_n,t_n+\frac{d_4}{M_{t_n}}\right]$,
	\begin{align*}
	|\tau_1(p_n,t)|\ge \frac{1}{\sqrt 2}|\tau_1(p_n,t_n)|,
	\end{align*}
\item for all $t\in\left[t_n,t_n+\frac{d_4}{M_{t_n}}\right]$ and all $p\in\gamma_t$ with $\operatorname{dist}_t(p,p_n)\le \sqrt{\frac{d_5}{M_{t_n}}},$
one has
	\begin{align*}
	|\tau_1(p,t)|\ge \frac{1}{\sqrt 2}|\tau_1(p_n,t)|,
	\end{align*}
\item hence for all $(p,t)\in \mathbf N(p_n,t_n,d_3)$,
	\begin{align*}
	|\tau_1(p,t)|\ge \frac{1}{2}|\tau_1(p_n,t_n)|.
	\end{align*}
\end{enumerate}
\end{theorem}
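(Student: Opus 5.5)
The plan is to copy the structure of Altschuler's argument for Theorem \ref{dissk}: a time lower bound at $p_n$, then a space lower bound along the curve, then combine them. The new ingredient, as for the curvature version, is derivative control up to the boundary. Throughout we work on the parabolic neighbourhood $\mathbf N(p_n,t_n,d)$, where Theorem \ref{dissk} gives $|\kappa|\ge \tfrac12|\kappa(p_n,t_n)|$. Hence $\kappa\neq0$ there, the Frenet frame and $\tau_1$ are smooth, and the evolution equation \eqref{eqn_evol-tau} is available. The estimate \eqref{eqn_FBCSF-derivative} holds up to the boundary on $[t_n,t_n+c/M_{t_n}]$. Since $|\partial_s^mT|$ controls $\kappa,\dots,\partial_s^{m-1}\kappa$ and $\kappa\tau_1,\dots$, it gives scale-invariant bounds
\begin{align*}
|\partial_s^j\kappa|\le C_jM_{t_n}^{(j+1)/2},\qquad |\partial_s^j\tau_1|\le C_j' M_{t_n}^{(j+1)/2},
\end{align*}
after dividing by $\kappa\ge \tfrac12\sqrt{M_{t_n}}$. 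The second torsion $\tau_2$ is bounded in the same way. All of these hold on a slightly shrunken neighbourhood, with constants independent of $n$. At endpoints we use Lemma \ref{lem_endpoint-control} and \eqref{boundforderivativeoftau1} instead of interior arguments, which only improves the bounds.

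\textbf{Step 1 (temporal).} Insert these bounds into \eqref{eqn_evol-tau}. Every term on the right scales like $M_{t_n}^{3/2}$: this covers $\partial_s^2\tau_1$, $(\kappa_s/\kappa)\partial_s\tau_1$, $\tau_1\kappa_{ss}/\kappa$, $\tau_1\kappa_s^2/\kappa^2$, $\tau_1\kappa^2$ and $\tau_1\tau_2^2$. Hence $|\partial_t\tau_1(p_n,t)|\le C M_{t_n}^{3/2}$, and integrating in time gives
\begin{align*}
|\tau_1(p_n,t)-\tau_1(p_n,t_n)|\le C M_{t_n}^{3/2}(t-t_n)\le Cd_4 M_{t_n}^{1/2}.
\end{align*}
By the hypothesis, $|\tau_1(p_n,t_n)|\ge\sqrt{\mu}\,|\kappa(p_n,t_n)|=\sqrt\mu\, M_{t_n}^{1/2}$. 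Choosing $d_4$ so that $Cd_4\le(1-2^{-1/2})\sqrt\mu$ yields (1). One subtlety is that $\partial_t$ is taken at a fixed parameter $p_n$, so we also need the parametrisation to be normal, which holds by the DeTurck step. At an endpoint, $p_n$ stays on the boundary by the free boundary condition, so nothing changes.

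\textbf{Step 2 (spatial).} At fixed time $t$ we have $|\partial_s\tau_1|\le C M_{t_n}$. For $\operatorname{dist}_t(p,p_n)\le\sqrt{d_5/M_{t_n}}$ this gives $|\tau_1(p,t)-\tau_1(p_n,t)|\le C\sqrt{d_5}\,M_{t_n}^{1/2}$. Step 1 bounds $|\tau_1(p_n,t)|$ from below by $\sqrt{\mu/2}\,M_{t_n}^{1/2}$, so choosing $d_5$ small gives (2). If $p_n$ is at or near the boundary, the arc from $p_n$ to $p$ stays inside $\gamma_t$ because we only move along the curve, so no reflection is needed.

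\textbf{Step 3.} Take $d_3\le\min(d_4,d_5)$ and also small enough that the time-$t_n$ distance and the time-$t$ distance are comparable on $\mathbf N$. Arclength changes by the factor $e^{-\int\kappa^2}$, which is bounded since $\int\kappa^2\,dt\le M_{t_n}\cdot d_3/M_{t_n}$. Chaining (1) and (2) then gives $\tfrac12$.

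The main obstacle is Step 1: checking that the derivative bounds for $\tau_1$ and $\tau_2$ really follow from \eqref{eqn_FBCSF-derivative} uniformly up to the boundary, and that the singular terms in \eqref{eqn_evol-tau} are controlled. For that I would rely on the lower bound on $\kappa$ from Theorem \ref{dissk}. If $\tau_2$ is not directly controlled, the first fallback is to estimate $\partial_t(\kappa^2\tau_1^2)$, which is a polynomial in $\partial_s^jT$, and then divide by $\kappa^2$.
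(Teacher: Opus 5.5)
The paper gives no proof of this statement beyond the citation of \cite{Altschuler91}, as with Theorem \ref{dissk}, where the only point made is that \eqref{eqn_FBCSF-derivative} holds up to the boundary. Your three-step outline (temporal, spatial, chaining) is that intended argument, but there is a genuine gap in how you use \eqref{eqn_FBCSF-derivative}.

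\textbf{The gap: the derivative estimate degenerates at $t_n$.} Based at $t_0=t_n$, \eqref{eqn_FBCSF-derivative} reads $|\partial_s^mT|^2\le C_mM_{t_n}(t-t_n)^{1-m}$. This is a smoothing estimate: it degenerates as $t\downarrow t_n$ for $m\ge2$ and says nothing about $\partial_s^2T,\partial_s^3T,\dots$ at $t=t_n$. No undelayed version can hold, since $M_{t_n}$ does not control $\partial_s^2T$ at time $t_n$. So you do not get $|\partial_s^j\kappa|\le C_jM_{t_n}^{(j+1)/2}$ on $[t_n,t_n+c/M_{t_n}]$, and both steps fail as written:
\begin{enumerate}
\item Step 1 needs $\partial_s^2\tau_1$, hence $\partial_s^4T$. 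Its bound is then of order $(t-t_n)^{-3/2}$, which is not integrable at $t_n$.
\item Step 2 at $t=t_n$ is needed for (2), and for (3) along $t=t_n$. It requires $\partial_s^3T$ at time $t_n$, for which you have no bound at all.
\end{enumerate}

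\textbf{The fix: a time delay.} The missing ingredient is the delay used in Step 1 of Theorem \ref{thm_tau_over_kappa_to_zero}. Choose $\tilde t_n<t_n$ with $t_n-\tilde t_n=c'/M_{\tilde t_n}$ (intermediate value theorem) and apply \eqref{eqn_FBCSF-derivative} based at $\tilde t_n$. Then use the normalisation of an essential blow-up sequence: $\kappa^2(p_n,t_n)$ is comparable, with a constant depending on $\rho$, to $\sup_{t\le t_n}M_t$. This gives $M_{\tilde t_n}\lesssim_\rho M_{t_n}$, hence $|\partial_s^mT|^2\le C(m,\rho)M_{t_n}^m$ on all of $[t_n,t_n+d/M_{t_n}]$ for $d$ small depending on $\rho$. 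This is also where the $\rho$-dependence of $d_3,d_4,d_5$ enters. With this in place your Steps 1--3 go through.

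\textbf{Smaller correction: $\tau_2$ is not controlled by dividing by $\kappa$.} The $B_2$-component of $\partial_s^3T$ is $\kappa\tau_1\tau_2$, so you control only $\tau_1\tau_2$. Moreover, in codimension $\ge2$ the vector $B_1$, and hence $\tau_1$ as a smooth quantity, exists only where $\tau_1\neq0$. Step 1 still survives, because the $B_1$-component of $\partial_s^4T$ equals $\kappa(\partial_s^2\tau_1-\tau_1\tau_2^2)+3\tau_1\partial_s^2\kappa+3\partial_s\kappa\,\partial_s\tau_1-\kappa^3\tau_1-\kappa\tau_1^3$. The combination $\partial_s^2\tau_1-\tau_1\tau_2^2$ is exactly what appears in \eqref{eqn_evol-tau}. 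Run this on the maximal interval where $|\tau_1(p_n,\cdot)|\ge\tfrac{1}{\sqrt2}|\tau_1(p_n,t_n)|$ and close the argument by continuity.

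Cleaner still is your fallback, but with $\kappa^4\tau_1^2=|\partial_sT\wedge\partial_s^2T|^2-\kappa^6$ rather than $\kappa^2\tau_1^2$, which contains $\kappa_s^2$. The former is a genuine polynomial in $\partial_sT,\partial_s^2T$, and its time derivative is $O(M_{t_n}^4)$. Dividing by $\kappa^4\gtrsim M_{t_n}^2$, and using $\partial_t(\kappa^2)=O(M_{t_n}^2)$ and $\tau_1^2=O(M_{t_n})$, gives $|\partial_t(\tau_1^2)|=O(M_{t_n}^2)$ without ever introducing $B_1$ or $\tau_2$.

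\textbf{Drop the endpoint appeal.} You do not need Lemma \ref{lem_endpoint-control} or \eqref{boundforderivativeoftau1} at the endpoints. Once the estimates hold up to the boundary, you only integrate in $s$ along the curve and in $t$ at a fixed parameter. Also, \eqref{boundforderivativeoftau1} is derived by differentiating in $s$ an identity ($T=\e\,\nu_{\partial\Omega}$) that holds only at the endpoint, so it should not be relied on.
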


\begin{lemma}[Cut-off inequality]\label{lem_cutoff_tau}
Let $\eta=\eta(\cdot,t)\ge0$ be $C^2$ in $s$ and measurable in $t$, and assume that for each $t$ the support of $\eta(\cdot,t)$ is compactly contained in the interior of $\gamma_t$. Suppose moreover that $\eta_t=0$ in the chosen time-independent parametrisation. Then for every interval $[a,b]\subset[0,\omega)$,
	\begin{align*}
	\int_a^b\int_{\gamma_t}\eta \tau_1^2|\kappa|\,ds\,dt
\le
\int_{\gamma_a}\eta |\kappa|\,ds+ \int_a^b\int_{\gamma_t}|(\eta)_{ss}||\kappa|\,ds\,dt.
	\end{align*}
\end{lemma}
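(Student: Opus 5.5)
The plan is to localise the computation behind the earlier $L^1$-curvature inequality (the theorem preceding Theorem \ref{dissk}), i.e.\ to weight Hamilton's regularisation by $\eta$. Throughout, fix $\e>0$ and set $K_\e=\sqrt{\kappa^2+\e}$. The evolution equation for $K_\e$ derived there gives, pointwise in the interior,
\begin{align*}
\partial_t K_\e\le \partial_s^2K_\e+\frac{\kappa^4}{K_\e}-\frac{\tau_1^2\kappa^2}{K_\e}.
\end{align*}
To get this inequality, drop the term $\frac{1}{K_\e^3}\kappa^2\kappa_s^2-\frac{1}{K_\e}\kappa_s^2$, which is $\le 0$ because $\kappa^2\le K_\e^2$.

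Next, differentiate $\int_{\gamma_t}\eta K_\e\,ds$ in time, working in the fixed parametrisation in which $\eta_t=0$. Under CSF the arclength element evolves by $\partial_t(ds)=-\kappa^2\,ds$, so
\begin{align*}
\frac{d}{dt}\int_{\gamma_t}\eta K_\e\,ds\le\int_{\gamma_t}\eta\,\partial_s^2K_\e\,ds+\int_{\gamma_t}\eta\Bigl(\frac{\kappa^4}{K_\e}-K_\e\kappa^2\Bigr)ds-\int_{\gamma_t}\eta\frac{\tau_1^2\kappa^2}{K_\e}\,ds.
\end{align*}
The middle integrand is $\le 0$ because $\kappa^2\le K_\e^2$. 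Since $\operatorname{supp}\eta(\cdot,t)$ is compactly contained in the interior of $\gamma_t$, I integrate by parts twice in $s$ with no boundary terms, giving $\int\eta\,\partial_s^2K_\e\,ds=\int\eta_{ss}K_\e\,ds\le\int|\eta_{ss}|K_\e\,ds$. This is precisely why the hypothesis on the support of $\eta$ is imposed: the endpoint term $\partial_sK_\e|_{\partial\gamma}$ from the global version disappears, and the free-boundary data from Lemma \ref{lem_endpoint-control} are never needed.

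I then integrate from $a$ to $b$ and drop the nonnegative term $\int_{\gamma_b}\eta K_\e\,ds$ on the left. This leaves
\begin{align*}
\int_a^b\!\!\int_{\gamma_t}\eta\frac{\tau_1^2\kappa^2}{K_\e}\,ds\,dt\le\int_{\gamma_a}\eta K_\e\,ds+\int_a^b\!\!\int_{\gamma_t}|\eta_{ss}|K_\e\,ds\,dt.
\end{align*}
Finally I let $\e\downarrow0$. On the right, $K_\e\downarrow|\kappa|$ with $K_\e\le|\kappa|+1$ on the (compact) supports, so dominated convergence applies. On the left, $\kappa^2/K_\e\uparrow|\kappa|$, so monotone convergence gives $\int\!\!\int\eta\tau_1^2|\kappa|$. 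Note that where $\kappa=0$ the torsion is undefined, but the integrand $\tau_1^2\kappa^2/K_\e$ should be read as zero there. Equivalently, one can interpret $\tau_1^2\kappa^2$ as $|\kappa\tau_1|^2$, the $B_1$-component of $\partial_tT$, which is smooth.

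The main obstacle is justifying the time differentiation and the integration by parts. For that, $\eta$ is only measurable in $t$, so I would first prove the integrated form for $\eta$ smooth in $t$ (for which $\eta_t=0$ is exactly what removes the extra term). I would then pass to measurable $t$-dependence by approximation, or simply note that with $\eta_t=0$ in a fixed parametrisation, $\eta$ is in fact time-independent as a function of the parameter. I would also check that $\partial_s\eta$ and $\eta$ vanish near $\partial\gamma_t$ uniformly on $[a,b]$, so the double integration by parts is legitimate for every $t$.
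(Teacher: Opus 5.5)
Your proposal is correct. The paper states this lemma without proof, and your argument is exactly the localisation of the $K_\e=\sqrt{\kappa^2+\e}$ computation the paper uses for the global $L^1$-curvature inequality just before Theorem \ref{dissk} (following Altschuler): the compact interior support of $\eta$ removes the endpoint term, and the limit $\e\downarrow 0$ is correctly handled by dominated convergence on the right and monotone convergence on the left.
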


\begin{theorem}\label{thm_tau_over_kappa_to_zero}
Let $\{(p_n,t_n)\}$ be an essential blow-up sequence for a curve shortening flow with free boundary, as \eqref{essentialseq}. Then
	\begin{align*}
	\lim_{n\to\infty}\Bigl(\frac{\tau_1}{\kappa}\Bigr)(p_n,t_n)=0.
	\end{align*}
\end{theorem}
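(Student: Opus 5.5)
We argue by contradiction, following Altschuler's torsion-decay argument. Suppose the conclusion fails. After passing to a subsequence there is $\mu>0$ with
\begin{align*}
\tau_1^2(p_n,t_n)\ge \mu\,\kappa^2(p_n,t_n)=\mu M_{t_n}^2\quad\text{for all }n.
\end{align*}
Theorem \ref{dissk} and Theorem \ref{thm_disstorsion} then apply on the parabolic neighbourhood $\mathbf N_n:=\mathbf N(p_n,t_n,d)$ with $d:=\min\{d_0,d_3\}$. There we have $|\kappa|\ge \tfrac12 M_{t_n}$ and $|\tau_1|\ge \tfrac12\sqrt{\mu}\,M_{t_n}$. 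This neighbourhood has spatial width comparable to $M_{t_n}^{-1/2}$ and duration comparable to $M_{t_n}^{-1}$. Since the derivative estimates \eqref{eqn_FBCSF-derivative} hold up to the boundary, both lower bounds remain valid when $p_n$ lies at or near an endpoint. In that case we only use the half of the neighbourhood lying inside $\gamma_t$, as in the boundary essential blow-up neighbourhood theorem.

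Next we localise the $L^1$ dissipation estimate. Choose a cut-off $\eta_n$, fixed in a time-independent parametrisation, with $0\le\eta_n\le1$ and $\eta_n\equiv1$ on the inner half of $\mathbf N_n$. Its support lies in the spatial slice of $\mathbf N_n$, and $|(\eta_n)_{ss}|\le C M_{t_n}$; this holds because arclength is comparable to the initial parametrisation on time scales $M_{t_n}^{-1}$, since $[\partial_t,\partial_s]=\kappa^2\partial_s$. Apply Lemma \ref{lem_cutoff_tau} on $[a,b]=[t_n,t_n+dM_{t_n}^{-1}]$, and use the upper bound $|\kappa|\le C M_{t_n}$ on that interval. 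The left-hand side is then bounded below by $c\,\mu\, M_{t_n}^{3}\cdot M_{t_n}^{-1/2}\cdot M_{t_n}^{-1}=c\mu M_{t_n}^{3/2}$. The right-hand side is bounded above by
\begin{align*}
\int_{\gamma_{t_n}}|\kappa|\,ds+C M_{t_n}\cdot M_{t_n}\cdot M_{t_n}^{-1/2}\cdot M_{t_n}^{-1}.
\end{align*}
Here the first term is bounded by the total curvature. The total curvature in turn is controlled by the monotonicity formula for $\int|\kappa|\,ds$ proved above, up to boundary terms. Comparing, this crude count gives only $M^{3/2}$ against $M^{1/2}$, so it already forces a contradiction. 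To avoid relying on the exact powers, we will instead use the scale-invariant form of the estimate: rescale by $M_{t_n}$, which is exactly what Altschuler does. In rescaled variables the torsion dissipation over a unit neighbourhood is at least $c\mu>0$ for every $n$. Summing over disjoint neighbourhoods of a sequence of times $t_n\to\omega$ then contradicts the finiteness of $\int_0^\omega\int\tau_1^2|\kappa|\,ds\,dt$.

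The key step is therefore the global bound
\begin{align*}
\int_0^\omega\int_{\gamma_t}\tau_1^2|\kappa|\,ds\,dt<\infty,
\end{align*}
obtained by integrating the $L^1$ inequality. This is where the free boundary enters, and it is the main obstacle. The boundary term $\partial_s|\kappa|$ evaluated on $\partial\gamma$ must be integrable in time. By \eqref{sffK}, at an endpoint $|\partial_s\kappa|\le K|\kappa|$, so we must bound $\int_0^\omega|\kappa|(\text{endpoint},t)\,dt$. The plan is to control this using the endpoint speed: the endpoint moves along $\partial\Omega$ with velocity $\kappa N$. We then try either a Type-I-style bound $|\kappa|\lesssim(\omega-t)^{-1/2}$, which is integrable, or a Gronwall-type argument that absorbs the boundary term, using $\tfrac{d}{dt}\int|\kappa|\le K\sum_{\partial\gamma}|\kappa|+\dots$. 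A further alternative is to work with the reflected curve near the boundary so that the boundary term disappears to leading order. If neither bound closes, we fall back on using cut-offs supported away from the endpoints, since Lemma \ref{lem_cutoff_tau} does not require any boundary term. In that case we handle boundary points $p_n$ by choosing interior points within distance $\sim M_{t_n}^{-1/2}$ of $p_n$, where the lower bounds on $|\kappa|$ and $|\tau_1|$ still hold.

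Finally, the disjointness of the neighbourhoods $\mathbf N_n$ along a subsequence follows from $t_n\to\omega$ and $M_{t_n}\to\infty$. With that in hand, the summed rescaled dissipation diverges while the global integral is finite. This contradiction gives $(\tau_1/\kappa)(p_n,t_n)\to0$.
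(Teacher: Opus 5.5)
\textbf{Verdict: genuine gap.} Your Altschuler-style summation is the right mechanism for interior sequences. However, the step it rests on,
$\int_0^\omega\int_{\gamma_t}\tau_1^2|\kappa|\,ds\,dt<\infty$, is never established, and none of the routes you list closes as stated.
\begin{itemize}
\item A Type I bound $|\kappa|\lesssim(\omega-t)^{-1/2}$ is not available, because this theorem is used precisely in the Type II analysis.
\item Gronwall fails, because the endpoint values of $|\kappa|$ are not controlled by $\int|\kappa|\,ds$. Moreover, for convex $\Omega$ the endpoint term $\partial_\mu|\kappa|$ is nonnegative, so it works against you.
\item Reflecting across a curved $\partial\Omega$ does not produce a curve shortening flow.
\item Cut-offs vanishing near $\partial\gamma_t$ give only per-neighbourhood inequalities, and these are scale invariant.
\end{itemize}

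Your comparison of $M^{3/2}$ with $M^{1/2}$ is a scaling slip. With $M_{t_n}=\max|\kappa|$, the neighbourhood has width $\sim M_{t_n}^{-1}$ and duration $\sim M_{t_n}^{-2}$. Counted correctly, Lemma \ref{lem_cutoff_tau} on one neighbourhood compares a lower bound of order $\mu d^{3/2}$ with an upper bound of order $\sqrt d$. So no single $\mathbf N_n$ yields a contradiction, and the global bound is indispensable.

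It can be obtained with a weight, as follows.
\begin{itemize}
\item Let $\phi=e^{-K\tilde d}$, where $\tilde d$ is a $C^2$ function equal to the distance to $\partial\Omega$ (positive in $\Omega$) near $\partial\Omega$.
\item Then $\partial_\mu\phi=K\phi$ at the endpoints. By Lemma \ref{lem_endpoint-control}, the boundary contribution $\phi\,\partial_\mu|\kappa|-|\kappa|\,\partial_\mu\phi$ to $\frac{d}{dt}\int\phi|\kappa|\,ds$ is therefore $\le 0$ (regularise $|\kappa|$ as the paper does).
\item The interior errors satisfy $2\langle\nabla\phi,\kappa N\rangle|\kappa|+\nabla^2\phi(T,T)|\kappa|\le C(\kappa^2+|\kappa|)$.
\item For $\omega<\infty$ these are integrable on $[0,\omega)$, because $\frac{d}{dt}L=-\int\kappa^2\,ds$ has no boundary term (the endpoints move normally), together with Cauchy--Schwarz.
\end{itemize}

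Separately, you missed the one-line argument that settles the statement as posed; this is how the paper treats boundary points. In \eqref{essentialseq} one has $p_n\in\partial\gamma_{t_n}$. Comparing $B_1$-components in the proof of Lemma \ref{lem_endpoint-control} gives $|\tau_1|\le K$ at every endpoint. Hence $|\tau_1/\kappa|(p_n,t_n)\le K/|\kappa(p_n,t_n)|\to0$. In particular, the contradiction hypothesis $\tau_1^2\ge\mu\kappa^2$ cannot hold at endpoints for large $n$, so your half-neighbourhood and inward-shifting devices are unnecessary.

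For interior sequences, the paper instead applies Lemma \ref{lem_cutoff_tau} on a single $\mathbf N(p_n,t_n,d)$ and takes $d$ small. By the scaling above, this compares $C\sim\mu d^{3/2}$ with $c_0\sqrt d$, which gives no contradiction for small $d$. There, your summation route, once completed by the weighted bound, is the sounder one.
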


\begin{proof}
Assume for contradiction that there exists $\mu>0$ and a subsequence, still denoted $\{(p_n,t_n)\}$, such that
	\begin{align*}
	\tau_1^2(p_n,t_n)\ge \mu \kappa^2(p_n,t_n)\quad\text{ for all }n.
	\end{align*}

\medskip
\noindent
\emph{Step 1: an interior subsequence and a time shift.}
Discarding finitely many terms, we may assume $p_n\in \gamma_{t_n}\setminus\partial\gamma_{t_n}$ for all $n$ and
	\begin{align*}
	\operatorname{dist}_{t_n}(p_n,\partial\gamma_{t_n})>0.
	\end{align*}
Since $M_t=\max_{\gamma_t}\kappa^2(\cdot,t)\to\infty$ as $t\uparrow\omega$, we may choose $\tilde t_n<t_n$ so that
	\begin{align*}
	t_n=\tilde t_n+\frac{1}{16M_{\tilde t_n}}.
	\end{align*}
The purpose of $\tilde t_n$ is to accommodate the time delay in the scale-invariant estimates used below.

\medskip
\noindent
\emph{Step 2: persistence of curvature and torsion on a parabolic neighbourhood.}
By Theorem \ref{thm_disstorsion} together with the corresponding curvature persistence estimate (Theorem \ref{dissk}),
there exists $d=d(\mu,\rho)>0$ such that for all $n$,
	\begin{align*}
	|\kappa(p,t)|\ge \frac{1}{2}|\kappa(p_n,t_n)|
\quad\text{ and}\quad
|\tau_1(p,t)|\ge \frac{1}{2}|\tau_1(p_n,t_n)|
	\end{align*}
for all $(p,t)\in \mathbf N(p_n,t_n,d)$.

In particular, on $\mathbf N(p_n,t_n,d)$ we have
	\begin{align*}
	\tau_1^2|\kappa|
\ge
\frac{1}{8}|\tau_1(p_n,t_n)|^2|\kappa(p_n,t_n)|.
	\end{align*}

\medskip
\noindent
\emph{Step 3: distances do not collapse on time scales of order $M_{t_n}^{-1}$.}
Fix $u_1,u_2$ in a time-independent parametrisation and write $ds=v\,du$. Then $\partial_t v=-\kappa^2 v$ and hence
	\begin{align*}
	\frac{d}{dt}\operatorname{dist}_t(u_1,u_2)= -\int_{u_1}^{u_2}\kappa^2(\cdot,t)\,ds
\ge
- M_t\,\operatorname{dist}_t(u_1,u_2).
	\end{align*}
For $t\in\left[t_n,t_n+\frac{1}{4M_{t_n}}\right]$ we use $M_t\le 2M_{t_n}$ to obtain
	\begin{align*}
	\frac{d}{dt}\operatorname{dist}_t(u_1,u_2)\ge -2M_{t_n}\operatorname{dist}_t(u_1,u_2),
	\end{align*}
and therefore
	\begin{align*}
	\operatorname{dist}_t(u_1,u_2)\ge \operatorname{dist}_{t_n}(u_1,u_2)e^{-2M_{t_n}(t-t_n)}
\ge e^{-1/2}\operatorname{dist}_{t_n}(u_1,u_2).
	\end{align*}

\medskip
\noindent
\emph{Step 4: a uniform positive lower bound on a local space-time integral.}
Let $r_n=\sqrt{\frac{d}{M_{t_n}}}$. Using the distance control above, the spatial part of $\mathbf N(p_n,t_n,d)$
contains, for $t\in\left[t_n,t_n+\frac{d}{M_{t_n}}\right]$, an arc of length comparable to $r_n$
centred at $p_n$, with constants independent of $n$.
Consequently there exists a constant $C=C(\mu,\rho,d)>0$ such that
	\begin{align*}
	\iint_{\mathbf N(p_n,t_n,d)}\tau_1^2|\kappa|\,ds\,dt \ge C
\quad\text{ for all sufficiently large }n.
	\end{align*}

\medskip
\noindent
\emph{Step 5: a cut-off argument to avoid boundary terms and force smallness near extinction.}
Choose $\eta_n=\eta_n(\cdot)$ at time $t_n$ with the following properties:
	\begin{align*}
	\eta_n\equiv 1\ \text{ on}\ \{\operatorname{dist}_{t_n}(p_n,\cdot)\le r_n\},\qquad
\eta_n\equiv 0\ \text{ on}\ \{\operatorname{dist}_{t_n}(p_n,\cdot)\ge 2r_n\},
	\end{align*}
and in addition
	\begin{align*}
	\eta_n\equiv 0\ \text{ in a neighbourhood of}\ \partial\gamma_{t_n}.
	\end{align*}
Transport $\eta_n$ in the fixed parametrisation so that $(\eta_n)_t=0$.
Then $\eta_n(\cdot,t)$ is supported in the interior of $\gamma_t$ for $t$ close to $t_n$,
so Lemma \ref{lem_cutoff_tau} applies on the interval
	\begin{align*}
	I_n=\left[t_n,t_n+\frac{d}{M_{t_n}}\right].
	\end{align*}
Moreover, we may arrange
	\begin{align*}
	|(\eta_n)_{ss}|\le \frac{c}{r_n^2}
	\end{align*}
with a universal constant $c$.

Applying Lemma \ref{lem_cutoff_tau} and using $M_t\le 2M_{t_n}$ on $I_n$, we get
	\begin{align*}
	\int_{I_n}\int_{\gamma_t}\eta_n \tau_1^2|\kappa|\,ds\,dt
\le
\int_{\gamma_{t_n}}\eta_n |\kappa|\,ds+ \int_{I_n}\int_{\gamma_t}|(\eta_n)_{ss}||\kappa|\,ds\,dt.
	\end{align*}
Since $\operatorname{supp}\eta_n$ has length at most $4r_n$ and $|\kappa|\le \sqrt{2M_{t_n}}$ on $I_n$, we have
	\begin{align*}
	\int_{\gamma_{t_n}}\eta_n |\kappa|\,ds \le 4r_n\sqrt{2M_{t_n}}=4\sqrt 2\sqrt d,
	\end{align*}
and similarly
	\begin{align*}
	\int_{I_n}\int_{\gamma_t}|(\eta_n)_{ss}||\kappa|\,ds\,dt
\le
\frac{c}{r_n^2}\cdot \frac{d}{M_{t_n}}\cdot 4\sqrt 2\sqrt d= 4c\sqrt 2\sqrt d.
	\end{align*}
Hence
	\begin{align*}
	\int_{I_n}\int_{\gamma_t}\eta_n \tau_1^2|\kappa|\,ds\,dt \le c_0\sqrt d,
	\end{align*}
where $c_0=4\sqrt 2(1+c)$ is independent of $n$.

\medskip
\noindent
\emph{Step 6: contradiction.}
Because $\eta_n\equiv 1$ on $\mathbf N(p_n,t_n,d)$, we have
	\begin{align*}
	\iint_{\mathbf N(p_n,t_n,d)}\tau_1^2|\kappa|\,ds\,dt
\le
\int_{I_n}\int_{\gamma_t}\eta_n \tau_1^2|\kappa|\,ds\,dt
\le c_0\sqrt d.
	\end{align*}
Choosing $d>0$ small so that $c_0\sqrt d<C$, this contradicts Step 4. Therefore no such $\mu>0$ can exist, and thus
	\begin{align*}
	\Bigl(\frac{\tau_1}{\kappa}\Bigr)(p_n,t_n)\to 0
	\end{align*}
along every essential blow-up sequence in the interior.

We now treat the boundary case. In particular, along any boundary blow-up sequence, the torsion is negligible compared to the curvature. By Lemma \ref{lem_endpoint-control}, we have a uniform bound on the torsion on the boundary:
	\begin{align*}
|\tau_1(p, t)| \leq K \quad \text{ for all }p \in \partial \gamma_t \text{ and for all } t\in [0,\omega)
	\end{align*}
We fix the boundary blow-up sequence $\left(p_j, t_j\right) \in \partial \gamma_{t_j}$ with $\left|\kappa\left(p_j, t_j\right)\right| \rightarrow \infty$ which ensures that that $\tau_1$ is well defined along this sequence. At each such point we estimate
	\begin{align*}
\left|\frac{\tau_1}{\kappa}\right|\left(p_j, t_j\right)=\frac{\left|\tau_1\left(p_j, t_j\right)\right|}{\left|\kappa\left(p_j, t_j\right)\right|} \leq \frac{K}{\left|\kappa\left(p_j, t_j\right)\right|}.
	\end{align*}
Since $\left|\kappa\left(p_j, t_j\right)\right| \rightarrow \infty$, the right-hand side tends to 0 , hence
	\begin{align*}
\frac{\tau_1}{\kappa}\left(p_j, t_j\right) \to 0,
	\end{align*}
as claimed.
\end{proof}

Therefore, from \cite{Altschuler91}, we know that a limit $\gamma_\infty$ exists and that is a family of planar, convex curves.

\subsection{Hamilton argument}\label{hamiltonsection}
In this section we apply Richard S. Hamilton’s Harnack inequality to the planar convex interior eternal limit obtained after (i) boundary Type II blow-up, (ii) passing to a free-boundary limit in a half-plane, and (iii) reflecting across the boundary line.

\begin{theorem}[Hamilton \cite{Hamilton}]\label{thm_Hamilton-Harnack}
Let $\{M_t\}_{t>0}$ be a smooth mean curvature flow of hypersurfaces in $\mathbb R^{n+1}$ with
nonnegative second fundamental form. Then for every tangent vector field $V$ on $M_t$ one has
	\begin{align*}
	\partial_t H+\frac{1}{2t}H+2\langle \nabla H,V\rangle + h(V,V)\ge 0,
	\end{align*}
where $H$ is the mean curvature, $\nabla$ is the Levi--Civita connection on $M_t$, and $h$ is the
second fundamental form.
\end{theorem}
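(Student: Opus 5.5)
The plan is to follow Hamilton's original strategy: reduce the matrix-type statement to a scalar quantity by optimising over $V$, derive a parabolic inequality for that scalar, and close with the maximum principle. First reduce to strictly convex hypersurfaces. By the strong maximum principle for $h$ (Hamilton's tensor maximum principle), for $t>0$ either $h>0$ everywhere, or the null space of $h$ is a parallel distribution and $M_t$ splits isometrically as $\mathbb R^k\times M'_t$ with $M'_t$ strictly convex. In the split case $H$, $\nabla H$ and $h$ have no components along the flat factor, so the inequality reduces to the one for $M'_t$. Hence assume $h>0$. Then the left-hand side is a convex quadratic in $V$, minimised at $V=-h^{-1}\nabla H$. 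The statement is therefore equivalent to
\begin{align*}
Z:=\partial_t H - h^{-1}(\nabla H,\nabla H)+\frac{H}{2t}\ \ge 0 .
\end{align*}
For curves, the case used in Section~\ref{hamiltonsection}, this reads $\kappa_t-\kappa_s^2/\kappa+\kappa/(2t)\ge0$, i.e.\ $(\log\kappa)_t-(\log\kappa)_s^2+1/(2t)\ge0$.

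Next compute the evolution of $Z$. Differentiating $\partial_t H=\Delta H+|h|^2H$ and the evolutions of $\nabla H$ and $h$ (Simons-type identities and the Codazzi equation), Hamilton's trick is to work with the unminimised expression
\begin{align*}
\widetilde Z=\partial_tH+2\langle\nabla H,V\rangle+h(V,V)
\end{align*}
for an arbitrary extension of $V$. One then chooses $V$ at a given point so that the bad first-order terms in $(\partial_t-\Delta)\widetilde Z$ cancel, namely $\nabla_iV_j=h_{ij}+\frac{1}{2t}\delta_{ij}$ (with $\partial_tV$ chosen correspondingly). With this choice the computation yields
\begin{align*}
(\partial_t-\Delta)\widetilde Z\ \ge\ 2\langle\text{first-order terms in }\widetilde Z\rangle+ |h|^2\widetilde Z+ \text{(nonnegative quadratic form)}-\frac{\widetilde Z}{t},
\end{align*}
which for the minimised quantity $t Z$ gives $(\partial_t-\Delta)(tZ)\ge 2\langle\nabla(tZ),\cdot\rangle+c\,tZ$ wherever $tZ\le0$. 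In the curve case all of this is an explicit one-variable computation with $L=\log\kappa$, using $L_t=L_{ss}+L_s^2+\kappa^2$.

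Finally apply the maximum principle. As $t\downarrow0$ we have $tZ\to0^{+}$ from the $H/2$ term, since $\partial_t H$ and $h^{-1}(\nabla H,\nabla H)$ are bounded on any $[\delta,T]$ and we may shift time, proving the estimate for $t-\delta$ and letting $\delta\to0$. So $tZ\ge0$ initially and stays nonnegative, at least when $M_t$ is compact. For noncompact $M_t$, such as after reflection when the blow-up limit is an eternal, possibly unbounded curve, I would either invoke the bounded-curvature version via a localisation/penalisation of the minimum (adding $\e(1+|x|^2+t)$-type barriers, with curvature bounds giving control at infinity), or note that in our application we only need it for a smooth convex limit with bounded curvature and its derivatives.

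\textbf{Main obstacle.} I expect the main obstacle to be the evolution computation for $\widetilde Z$ with the optimal choice of $\nabla V$ and $\partial_tV$, namely verifying that the remaining zeroth-order terms form a nonnegative quadratic form using $h\ge0$. I would do this first in the curve case, where it is a short calculation, and treat the general case as a direct citation of \cite{Hamilton}.
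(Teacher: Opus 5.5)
\emph{Verdict: there is a genuine gap.} The extension you commit to, $\nabla_iV_j=h_{ij}+\frac{1}{2t}\delta_{ij}$, is wrong, and with it the computation does not close.

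The paper gives no proof of this statement; it cites \cite{Hamilton} and only uses the planar-curve specialisation. Your overall plan (minimise over $V$, run the maximum principle on the minimised quantity, do curves by hand, cite the general case) is compatible with that. The problem is the key step you name as the main obstacle.

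Your choice of $\nabla V$ is not invariant under parabolic rescaling. $V$ scales like $H$, i.e.\ like $\lambda^{-1}$, so $\nabla V$ and $1/t$ scale like $\lambda^{-2}$, while $h$ scales like $\lambda^{-1}$. You have transplanted the Ricci-flow choice $R_{ij}+\frac{1}{2t}g_{ij}$. For mean curvature flow the metric evolves by $\partial_tg_{ij}=-2Hh_{ij}$, and the scale-correct analogue is $\nabla_iV_j=Hh_{ij}+\frac{1}{2t}g_{ij}$. This is exactly what $V=X^\top/(2t)$ satisfies on an expanding soliton, where the Harnack inequality is an equality. Also, the choice of $\partial_tV$ is irrelevant: it multiplies $\nabla H+h(V)$, which vanishes at the minimising $V$.

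In the curve case you can see the failure explicitly. Take
$\widetilde Z=\kappa_t+2\kappa_s v+\kappa v^2+\frac{\kappa}{2t}$, and use $\kappa_t=\kappa_{ss}+\kappa^3$ and $[\partial_t,\partial_s]=\kappa^2\partial_s$. At any point where $\kappa_s+\kappa v=0$ one finds
\begin{align*}
(\partial_t-\partial_s^2)\widetilde Z=(5\kappa^2-4v_s)\,\widetilde Z-2\kappa\Bigl(v_s-\kappa^2-\frac{1}{2t}\Bigr)^2 ,
\end{align*}
independently of $v_t$ and $v_{ss}$.
\begin{itemize}
\item Your choice $v_s=\kappa+\frac1{2t}$ leaves the term $-2\kappa^3(1-\kappa)^2$. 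This is strictly negative unless $\kappa=1$. At a first zero of $\min_V\widetilde Z$ the maximum principle only yields $(\partial_t-\partial_s^2)\widetilde Z\le 0$, which is then no contradiction.
\item The choice $v_s=\kappa^2+\frac{1}{2t}$ gives $(\partial_t-\partial_s^2)\widetilde Z=(\kappa^2-\frac2t)\widetilde Z$, and the argument closes.
\end{itemize}

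Simplest of all is to take $v=-\kappa_s/\kappa$ globally. Then $\widetilde Z=\kappa\bigl((\log\kappa)_{ss}+\kappa^2+\frac{1}{2t}\bigr)$ and $v_s-\kappa^2-\frac1{2t}=-\widetilde Z/\kappa$. The identity becomes
$(\partial_t-\partial_s^2)\widetilde Z=\bigl(5\kappa^2-4v_s-\frac{2}{\kappa}\widetilde Z\bigr)\widetilde Z$, and the maximum principle applies directly. Equivalently, in the tangent-angle parametrisation $u(\theta,t)=\kappa$ with $u_t=u^2(u_{\theta\theta}+u)$, the quantity $Q=tu_t+\frac u2$ solves $Q_t=u^2(Q_{\theta\theta}+Q)+\frac{2u_t}{u}Q$.

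Two smaller points:
\begin{itemize}
\item As $t\downarrow0$ you have $tZ\to H/2>0$, not $0^+$.
\item The noncompact case is still only gestured at. This is the case the paper actually needs, for the eternal reflected limit, where $\kappa\to0$ at infinity and $h^{-1}$ degenerates. The paper itself also just cites \cite{Hamilton} there.
\end{itemize}
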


\begin{remark}[Specialisation to curve shortening flow]
For a curve $\gamma_t\subset\mathbb R^{n+1}$ evolving by curve shortening flow, the mean curvature is
the scalar curvature $\kappa$ and the second fundamental form is $h=\kappa\, g$, where $g$ is the
induced metric on the curve. Thus $h(V,V)=\kappa|V|^2$. In this one-dimensional setting the Harnack
inequality takes the form
	\begin{align*}
	\partial_t\kappa+\frac{1}{2t}\kappa+2\langle \nabla \kappa,V\rangle+\kappa|V|^2\ge 0,
	\end{align*}
for all tangent vector fields $V$ along $\gamma_t$.
If we time-shift the flow to start at $t=t_0$, i.e. consider $\tilde\gamma_t=\gamma_{t_0+t}$
for $t>0$, then the same inequality reads
	\begin{align*}
	\partial_t\kappa+\frac{1}{2(t-t_0)}\kappa+2\langle \nabla \kappa,V\rangle+\kappa|V|^2\ge 0
\qquad (t>t_0).
	\end{align*}
\end{remark}
Following Hamilton, we set $Z:=\partial_t \kappa+2\langle \nabla \kappa, V\rangle+\kappa|V|^2$ and $\widetilde{Z}=Z+\frac{1}{2 (t-t_0)} \kappa$. Note that $Z\ge-\frac{1}{2(t-t_0)}\kappa$. If the solution is weakly convex and eternal, then since it exists for time $t >t_0$, when $t_0=-\infty$ the Harnack estimate implies that $Z > 0$ everywhere for all $V$. If equality occurs at some spacetime point for some $V$, then by Hamilton’s strong maximum principle for the Harnack quadratic (together with bounded curvature) the flow is a translator. We now assume our solution is eternal and strictly convex, so that $\kappa>0$.
	\begin{lemma}[\cite{Hamilton}]
If $F \geq 0$ is a weakly positive function on $M$ satisfying $\left(D_t-\Delta\right) F=0$ and if $Z \geq F$ at time $\alpha$ for all $V$, then $Z \geq F$ at all subsequent times for all $V$.
	\end{lemma}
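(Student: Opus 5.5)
The approach is a maximum-principle argument for the Harnack quadratic, adapted to the curve. Since $Z$ depends on the tangent vector $V$, first minimise over $V$. Along a curve $V=v\,T$ is one-dimensional, and $\kappa>0$, so the minimum of $2\kappa_s v+\kappa v^2$ is attained at $v=-\kappa_s/\kappa$. This gives
\begin{align*}
Z_{\min}=\partial_t\kappa-\frac{\kappa_s^2}{\kappa}=\kappa_{ss}+\kappa^3-\frac{\kappa_s^2}{\kappa}=\kappa\bigl((\log\kappa)_{ss}+\kappa^2\bigr),
\end{align*}
using \eqref{eqn_evol-kappa} with $\tau_1=0$, which we may assume since the limit is planar. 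The statement ``$Z\ge F$ for all $V$'' is therefore equivalent to $W:=Z_{\min}-F\ge 0$.

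The plan is to compute the evolution of $Z_{\min}$ directly from the planar equations, with commutator $[\partial_t,\partial_s]=\kappa^2\partial_s$ from \eqref{eqn_commute-ts}. Hamilton's structure should give
\begin{align*}
(\partial_t-\partial_s^2)Z_{\min}=2\kappa^2 Z_{\min}+(\text{gradient terms in }\partial_s Z_{\min}).
\end{align*}
More precisely, with $Y:=Z_{\min}/\kappa=(\log\kappa)_{ss}+\kappa^2$, one expects
\begin{align*}
(\partial_t-\partial_s^2)Y=2\frac{\kappa_s}{\kappa}\partial_s Y+2\kappa^2 Y,
\end{align*}
or an equivalent identity. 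I would verify this by differentiating $\log\kappa$, whose evolution is $(\log\kappa)_t=(\log\kappa)_{ss}+(\log\kappa)_s^2+\kappa^2$.

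Given $(\partial_t-\partial_s^2)F=0$ and $F\ge0$, set $W=Z_{\min}-F$. Then
\begin{align*}
(\partial_t-\partial_s^2)W=2\kappa^2W+2\kappa^2F+\text{first-order terms}\ge 2\kappa^2W+\text{first-order terms}\quad\text{wherever } W=0,
\end{align*}
because $F\ge0$. Here the reaction term $2\kappa^2F$ has a good sign. The lower-order terms arising from expressing $F$ against the $\kappa$-weighting also need checking. It may be cleaner to work with $W/\kappa$ and compare against $F/\kappa$, or to pick the weighting so that $F$ enters only through $(\partial_t-\Delta)F=0$.

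The weak maximum principle then runs as usual. Take the first time after $\alpha$ at which $\min W=0$ and reach a contradiction by perturbing, for example with $W+\e e^{Ct}$. For the reflected eternal limit the curve is either closed or the perturbation must control behaviour at infinity; the bounded curvature of the blow-up limit allows a standard cut-off or a Karp--Li type argument.

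The main obstacle is the exact computation of $(\partial_t-\partial_s^2)Z_{\min}$ and confirming that the terms involving $F$ have the right sign. If the minimised form proves awkward, the fallback is Hamilton's tensor approach: fix $V$ and extend it by an ODE at the minimising point, $\partial_t V$ chosen to kill bad terms. This uses that the minimum over $V$ is attained, with $\kappa>0$ giving coercivity in $V$.
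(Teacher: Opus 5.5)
Your strategy is sound and is essentially Hamilton's. The paper itself gives no proof and only cites \cite{Hamilton}. The argument there is a maximum principle for $Z-F$: the reaction term in the evolution of $Z$ has a good sign, and $F\ge 0$ lets you absorb $F$ into it. Where you differ is that you minimise over $V$ first. This is legitimate in one dimension, because $V=vT$ and $\kappa>0$ make $v\mapsto Z$ a coercive quadratic. Hamilton instead extends the minimising $V$ with prescribed derivatives, which is your fallback. Minimising first buys you an exact scalar equation instead of an inequality at a minimum point.

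However, the evolution identity you predict is wrong, so the step you call the main obstacle does not come out as you wrote it. Computing with $L=\log\kappa$, $p=L_s$, $L_t=L_{ss}+p^2+\kappa^2$ and $[\partial_t,\partial_s]=\kappa^2\partial_s$, one finds
\begin{align*}
(\partial_t-\partial_s^2)Y=2\frac{\kappa_s}{\kappa}\,\partial_sY+2Y^2 .
\end{align*}
The reaction term is $2Y^2$, not $2\kappa^2Y$. This is exactly what yields Hamilton's barrier $Y\ge -\frac{1}{2t}$, since $-\frac{1}{2t}$ solves $y'=2y^2$. Multiplying back by $\kappa$, the gradient terms cancel exactly, giving
\begin{align*}
(\partial_t-\partial_s^2)Z_{\min}=\kappa^2 Z_{\min}+\frac{2}{\kappa}Z_{\min}^2 .
\end{align*}
You can check this quickly in the tangent-angle parametrisation. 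There $Z_{\min}=\partial_t\kappa|_\theta$, and the normal-gauge operator $\partial_t-\partial_s^2$ equals $\partial_t|_\theta-\kappa^2\partial_\theta^2$. Differentiating $\partial_t\kappa|_\theta=\kappa^2\kappa_{\theta\theta}+\kappa^3$ in $t$ then gives the identity.

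Consequently, for $W=Z_{\min}-F$,
\begin{align*}
(\partial_t-\partial_s^2)W=\kappa^2W+\kappa^2F+\frac{2}{\kappa}Z_{\min}^2\ \ge\ \kappa^2W
\end{align*}
holds everywhere. There are no first-order terms, and no reweighting by $\kappa$ is needed; passing to $F/\kappa$ would only introduce extra terms. The weak maximum principle then gives $W\ge0$ for $t\ge\alpha$, which is the lemma.

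One caveat concerns the non-compact eternal limit where the lemma is used. Bounded curvature alone does not bound $W$ from below, because of the term $-\kappa_s^2/\kappa$ where $\kappa\to0$. To run a non-compact maximum principle you should use the Harnack lower bound itself ($Z_{\min}\ge 0$ in the eternal case), together with boundedness of $F$.
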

If the curvature $\kappa$ assumes its maximum at a point in space-time, then at that point, we have $\partial_t \kappa=0$ and $\nabla\kappa=0$, so $Z=0$ in the direction $V=0$. Thus the strong maximum principle implies that there must be exactly one $V$ at each point in space-time where $Z=0$, and $V$ will vary smoothly. Also, since $Z(V)=0$ at a maximum point for some $V$, then $\frac{\partial Z}{\partial V}=0$.

Since $\kappa>0$, the map $V \mapsto Z(V)$ is a strictly convex quadratic, hence admits a unique minimizer $V^*$. If $Z \geq 0$ and $Z(V)=0$ at some point for some $V$, then $V=V^*$ and $\partial_V Z=0$, yielding \eqref{eq1} and the minimal value satisfies \eqref{eq2}.

Differentiating $Z$ with respect to $V$, we get the following:
	\begin{align}\label{eq1}
	\frac{\partial Z}{\partial V}=2\nabla\kappa+2\kappa V=0 \Longleftrightarrow V^*=-\frac{\nabla\kappa}{\kappa}.
	\end{align}
Putting $V^*$ back into $Z$, we have
	\begin{align}\label{eq2}
	Z(V^*)&=\partial_t \kappa-\frac{|\nabla\kappa|^2}{\kappa}=0.
	\end{align}
\begin{theorem}[\cite{Hamilton}]
If we have a solution to the curve shortening flow which is strictly convex, and a vector field $V$ satisfying
	\begin{align*}
\nabla\kappa+\kappa V=0
	\end{align*}
and
	\begin{align*}
\partial_t \kappa-\frac{|\nabla\kappa|^2}{\kappa}=0,
	\end{align*}
then the solution is a translating soliton.
\end{theorem}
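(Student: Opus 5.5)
Since the flow is a planar, strictly convex curve shortening flow (the limit produced after reflection is planar and convex), I will parametrise it by the tangent angle $\theta$. Convexity makes $\theta$ a legitimate parameter, and in this gauge one has $\partial_t\kappa = \kappa^2\kappa_{\theta\theta}+\kappa^3$. The goal is to produce a fixed vector $W\in\mathbb R^2$ such that $\kappa = \langle W, N\rangle$ at every point and every time. That is exactly the translator equation $\kappa N = W^\perp$, so the curves $\gamma_t+tW$ coincide up to tangential reparametrisation.

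First I convert the two hypotheses into geometric statements. In one dimension $V$ is a multiple of $T$, say $V=vT$. The first identity $\nabla\kappa+\kappa V=0$ gives $v=-\kappa_s/\kappa$. Substituting into the second identity gives
\begin{align*}
\partial_t\kappa=\frac{\kappa_s^2}{\kappa}.
\end{align*}
The left-hand side is the time derivative taken in the normal (arclength-moving) parametrisation. I then rewrite this in the angle parametrisation using $\partial_s=\kappa\partial_\theta$ together with the commutator relation from Lemma \ref{lem_evol-struct}(ii). The resulting equation is
\begin{align*}
\kappa_{\theta\theta}+\kappa=0
\end{align*}
for each fixed $t$. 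Its solutions are $\kappa(\theta,t)=a(t)\cos\theta+b(t)\sin\theta$, which is precisely $\kappa=\langle W(t),N\rangle$ with $N=(-\sin\theta,\cos\theta)$, after a suitable relabelling.

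Next I must show that $W$ is independent of $t$. I feed $\kappa=\langle W(t),N(\theta)\rangle$ back into the evolution equation in the $\theta$ gauge. Since $\kappa_{\theta\theta}+\kappa=0$, the right-hand side $\kappa^2(\kappa_{\theta\theta}+\kappa)$ vanishes, so $\partial_t\kappa|_\theta=0$. This forces $\langle W'(t),N(\theta)\rangle=0$ for all $\theta$ in an interval, and hence $W'=0$. Finally I define $\tilde\gamma(\cdot,t)=\gamma(\cdot,t)-tW$. Its normal velocity is $\kappa-\langle W,N\rangle=0$, so the image curve is stationary, and the solution is a translating soliton with velocity $W$.

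The main obstacle is the change of time derivative between the arclength/normal parametrisation and the angle parametrisation. One must check that the term $\kappa_s^2/\kappa$ appearing in the hypothesis matches the tangential correction exactly, so that what remains is $\kappa^2(\kappa_{\theta\theta}+\kappa)$ and not a mixed expression. I would do this computation carefully with $\partial_t|_\theta=\partial_t|_{\text{normal}}+\text{(tangential speed)}\,\partial_s$, where the tangential speed is chosen to keep $\theta$ fixed; a direct computation gives that speed as $-\kappa_s/\kappa$, which is $v$. This identification of $V$ with the angle-preserving tangential motion is the heart of Hamilton's argument. If it becomes messy, the fallback is to verify directly that $\kappa^{-1}\nabla\kappa$ generates the same reparametrisation as the one that makes $\gamma+tW$ stationary.
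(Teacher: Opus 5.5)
Your argument is correct, but it does not follow the route the paper relies on. The paper gives no proof of its own and defers to Hamilton. Hamilton's argument is set up for strictly convex hypersurfaces in any dimension and works with $V$ directly, with no special parametrisation. For curves the direct argument runs as follows:
\begin{itemize}
\item Writing $V=vT$, the two conditions give $v=-\kappa_s/\kappa$ and $\kappa_{ss}=\kappa_s^2/\kappa-\kappa^3$, hence $\partial_s v=\kappa^2$.
\item From these one checks that the ambient vector $W:=\kappa N+V$ satisfies $\partial_sW=0$ and $\partial_tW=0$.
\item So $\partial_t\gamma=W-V$ is a fixed vector plus a tangential term, and $V=W^{\top}$ is the tangential part of the translation velocity. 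For the Grim Reaper, $V=\sin\theta\,T$ and $\kappa N+V=(0,1)$.
\end{itemize}

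You instead pass to the Gage--Hamilton angle gauge, which makes the content of the hypotheses transparent. The first identity merely defines $V$ and is automatic once $\kappa>0$. The second is equivalent to $\partial_t\kappa|_\theta=0$, and at each fixed time to $\kappa_{\theta\theta}+\kappa=0$, which is the translator equation $\kappa=\langle W,N\rangle$ written in angle form.

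The step you single out as the main obstacle is actually a purely spatial identity at each time. Since $\kappa_{ss}=\kappa^2\kappa_{\theta\theta}+\kappa\kappa_\theta^2$ and $\kappa_s^2/\kappa=\kappa\kappa_\theta^2$, the hypothesis $\kappa_{ss}+\kappa^3=\kappa_s^2/\kappa$ is exactly $\kappa^2(\kappa_{\theta\theta}+\kappa)=0$. No commutator is needed. The equation $\partial_t\kappa|_\theta=\kappa^2(\kappa_{\theta\theta}+\kappa)$ enters only to show that $W$ is independent of $t$.

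Your route is shorter and produces $W$ explicitly from the coefficients of $\cos\theta$ and $\sin\theta$. However, it needs planarity and a global angle parametrisation. In the paper these are supplied by the torsion-decay theorem and the convexity of the reflected limit. The ambient route is dimension-free and needs no choice of parametrisation.
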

In the one-dimensional setting, Hamilton's rigidity conditions reduce precisely to $\nabla \kappa+\kappa V=0$ and the corresponding equality-case propagation for the Harnack quadratic. Bounded curvature on $(-\infty, \infty)$ ensures the Harnack quadratic attains its infimum and allows the strong maximum principle/equality-case argument to be applied on the eternal limit. Due to Lemma \ref{bddcurv}, we have that the eternal solution is a translating soliton.
	\begin{lemma}[\cite{Altschuler91}]\label{bddcurv}
For a type-II singularity, there exists an essential blow-up sequence along which the rescaled solutions converge to a solution whose curvature is bounded on $[-\infty,+\infty]$.
	\end{lemma}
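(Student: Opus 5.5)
The plan is Hamilton's point-picking argument, in the form Altschuler uses for space curves, adapted to the free boundary through the estimates of Section~\ref{sec:FBCSF-adapt}.

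\emph{Choosing the blow-up points.} Since the singularity is of Type II, $\sup_{\gamma_t}\kappa^2\,(\omega-t)$ is unbounded as $t\uparrow\omega$. Fix $T_i\uparrow\omega$ and choose $(p_i,t_i)$ with $t_i\le T_i$ that almost maximise
\begin{align*}
\kappa^2(p,t)\,(T_i-t)
\end{align*}
over $\gamma_t\times[0,T_i]$; for instance, take a point at which the value is at least half the supremum. Set $\lambda_i=|\kappa(p_i,t_i)|$. The maximality gives, for all $t\in[t_i-\alpha_i\lambda_i^{-2},\,t_i+\omega_i\lambda_i^{-2}]$ with $\alpha_i,\omega_i\to\infty$, the bound
\begin{align*}
\kappa^2(p,t)\le C\,\lambda_i^2\,\frac{T_i-t_i}{T_i-t}.
\end{align*}
On the rescaled time interval the right-hand side is at most $C'\lambda_i^2$. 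The forward bound uses $\lambda_i^2(T_i-t_i)\to\infty$, which is exactly the Type II hypothesis; this is Hamilton's standard estimate. We may also take $t_i$ so that $|\kappa(p_i,t_i)|$ is comparable to $\max_{\gamma_{t_i}}|\kappa|$. Then $(p_i,t_i)$ is an essential blow-up sequence in the sense used in Theorems~\ref{dissk} and~\ref{thm_tau_over_kappa_to_zero}.

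\emph{Passing to the limit.} Rescale by $\lambda_i$ about $(\gamma(p_i,t_i),t_i)$. The rescaled flows have curvature bounded by $\sqrt{C'}$ on time intervals $[-\alpha_i,\omega_i]$ that exhaust $\mathbb R$, and curvature exactly $1$ at the origin at time $0$. The dilation-invariant derivative bounds~\eqref{eqn_FBCSF-derivative} hold up to the free boundary, so all higher derivatives are uniformly bounded. If $p_i$ stays within distance $O(\lambda_i^{-1})$ of $\partial\gamma$, the rescaled barriers converge in $C^2_{\rm loc}$ to a hyperplane and we reflect the limit across it; otherwise the limit has no boundary. Arzel\`a--Ascoli then gives a smooth eternal limit with $|\kappa|\le\sqrt{C'}$ on all of $\mathbb R$ and $|\kappa|=1$ at the origin, as required.

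\emph{Main obstacle.} The delicate point is reconciling the point-picking with the essential-sequence requirement. We need the chosen $(p_i,t_i)$ to be essential and to be compatible with the time shift in Theorem~\ref{thm_tau_over_kappa_to_zero}, so that planarity (torsion over curvature tending to zero) also transfers to the limit. I would handle this exactly as Altschuler does, relying on Theorem~\ref{dissk}: curvature persists on the parabolic neighbourhood $\mathbf N(p_i,t_i,d_0)$, and this lets us replace a near-maximiser by a spatial maximiser at a nearby time while losing only a fixed factor.

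In the boundary case there is one further point to check. The lower bound $|\kappa|\ge\tfrac12\lambda_i$ must hold on a uniform neighbourhood, and the Boundary essential blow-up neighbourhood theorem supplies this. This ensures that the limit is nonflat, so that its curvature is bounded but attains a positive value, which is the input needed for the Harnack rigidity argument.
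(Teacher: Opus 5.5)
Your proposal is correct and is essentially the argument the paper relies on when it cites Altschuler: Hamilton's point-picking for $\kappa^2(T_i-t)$ with $T_i\uparrow\omega$, then compactness from the boundary derivative estimates, with a reflection when the barrier stays at bounded rescaled distance. Two minor remarks: essentialness is automatic, because a spacetime near-maximiser is in particular a near-maximiser of $\kappa^2(\cdot,t_i)$ on $\gamma_{t_i}$, and nonflatness of the limit already follows from $|\kappa|=1$ at the origin, so your ``main obstacle'' paragraph is unnecessary; and for the later Harnack rigidity step you should take near-maximisers with ratio tending to $1$ rather than $\tfrac12$, since Hamilton's equality case needs the limit to attain its curvature maximum at the origin, not merely to be nonflat.
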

\subsection{Concluding argument}
Assume throughout that the reflected Gaussian entropy satisfies
	\begin{align}\label{entropybound}
\operatorname{Ent}_{\partial \Omega}\left[\left\{\gamma_t\right\}\right]<2.
	\end{align}
By the reflected monotonicity formula, the reflected entropy is nonincreasing along the flow, and every (boundary) tangent flow arising from any blow-up has reflected entropy bounded above by \eqref{entropybound}.

We first rule out Type II singularities under this entropy bound. Indeed, if a boundary Type II singularity occurred at time $T<\infty $, then by the Type II blow-up analysis in subsection \ref{hamiltonsection} rescaling at the curvature scale and passing to a limit produces a smooth eternal free-boundary limit in a half-plane whose reflection is an eternal interior solution. Hamilton's Harnack inequality forces this reflected limit to be a translator and Theorem \ref{thm_tau_over_kappa_to_zero} shows that it must in fact be the Grim Reaper translator. In particular, the corresponding tangent flow has Gaussian entropy at least $ 2 $. Since the reflected entropy of the original free-boundary flow controls the entropy of such reflected tangent flows by Lemma \ref{lem_entropy_to_tangent}, this yields
	\begin{align*}
\operatorname{Ent}_{\partial \Omega}\left[\left\{\gamma_t\right\}\right] \geq 2
	\end{align*}
contradicting \eqref{entropybound}. Therefore, no Type II singularity can occur.

Consequently, if the maximal time satisfies $T<\infty $, then any singularity must be of Type I. Under the entropy bound \eqref{entropybound}, the Type I boundary blow-up analysis shows that the (reflected) tangent shrinker is the round circle, hence the boundary tangent flow is the unit semicircle in the tangent half-space. In particular the diameter of $ \gamma_t $ tends to $0 $ and $ \gamma_t $ collapses to a single boundary point $z\in\partial\Omega$, and the rescaled curves
	\begin{align*}
\widetilde{\gamma}_t=\frac{\gamma_t-z}{\sqrt{2(T-t)}}
	\end{align*}
converge smoothly as $t\to T$ to the unit semicircle in $T_z\Omega\simeq\mathbb R^{n+1}_+$, proving alternative (b).

Finally, if no finite-time singularity occurs, then $T=\infty$, and the long-time convergence statement in alternative (a) follows from the chord-convergence theorem of Langford-Zhu for free-boundary curve shortening flow in strictly convex domains \cite{LanZhu}. This completes the proof of Theorem \ref{thm_FBCSF-classification}.

\end{document}